\documentclass[12pt]{amsart}
\usepackage{amsmath, amsthm, amssymb}
\usepackage[utf8]{inputenc}
\usepackage[margin=2in, top=20mm, footskip=.30in]{geometry}
\usepackage[english]{babel}
\usepackage{amsmath,amsthm,amssymb,color,fullpage,enumitem,tikz,float,subcaption,algpseudocode,algorithm,bbm}
\usepackage[indent=10pt]{parskip}
\usepackage{authblk}
\usepackage[colorlinks=true,linkcolor=blue]{hyperref}
\usepackage{amsaddr}
\usepackage{xcolor}
\usepackage{soul}

\newcommand{\CC}{\mathbb{C}}

\newcommand{\EE}{\mathbb{E}}

\newcommand{\sss}{\mathbf{s}}
\newcommand{\yy}{{\mathbf{y}}}
\newcommand{\ii}{{\mathbbm{i}}}
\newcommand{\xx}{{\mathbf{x}}}
\newcommand{\ttt}{{\mathbf{t}}}

\newcommand{\RR}{\mathbb{R}}

\newcommand{\indep}{\perp \!\!\! \perp}

\newcommand{\cV}{\mathcal{V}}

\newcommand\scalemath[2]{\scalebox{#1}{\mbox{\ensuremath{\displaystyle #2}}}}

\newcommand{\var}{{\operatorname{var}}}


\newtheorem{thm}{Theorem}[section]
\newtheorem{rmk}{Remark}[section]
\newtheorem{lem}{Lemma}[section]
\newtheorem{cor}{Corollary}[section]
\newtheorem{prop}{Proposition}[section]
\newtheorem{conj}{Conjecture}[section]

\newtheorem{defn}{Definition}[section]
\newtheorem{exmp}{Example}[section]


\newcommand{\ti}[1]{\textit{#1}}

\newcommand{\cum}[3]{\kappa_{#1}(#2)_{#3}}
\newcommand{\gencum}{\cum{r}{\xx}{i_1\ldots i_r}}

\title{Cumulant Tensors in Partitioned Independent Component Analysis}
\author{Marina Garrote-L\'opez AND Monroe Stephenson}
\address{Max Planck Institute for Mathematics in the Sciences}
\email{marina.garrote@mis.mpg.de; monroe.stephenson@mis.mpg.de}



\begin{document}

\maketitle

\begin{abstract} 
In this work, we explore Partitioned Independent Component Analysis (PICA), an extension of the well-established Independent Component Analysis (ICA) framework. Traditionally, ICA focuses on extracting a vector of independent source signals from a linear combination of them defined by a mixing matrix. We aim to provide a comprehensive understanding of the identifiability of this mixing matrix in ICA. Significant to our investigation, recent developments by Mesters and Zwiernik relax these strict independence requirements, studying the identifiability of the mixing matrix from zero restrictions on cumulant tensors. In this paper, we assume alternative independence conditions, in particular, the PICA case, where only partitions of the sources are mutually independent. We study this case from an algebraic perspective, and our primary result generalizes previous results on the identifiability of the mixing matrix.
 
\end{abstract}

\section{Introduction}

Independent Component Analysis (ICA) has long been a powerful tool for blind source separation, transforming observed random vectors into statistically independent components \cite{bookICA, Comon2010, aapo1, icaintro}. Traditionally, ICA assumes mutual independence among the elements of the hidden vector, leading to identifiable solutions for the mixing matrix up to sign permutations or orthogonal transformations, as established by Comon's work \cite{comon}. 

In more detail, consider the scenario where we observe $d$ random variables $y_1,\ldots,y_d$, modeled as a linear combination of $d$ random variables $s_1,\ldots,s_d$, represented by the equation $$\yy = A\sss,$$ where $\yy\in\RR^d$ is observed, $A\in\RR^{d\times d}$ is an unknown invertible matrix, and $\sss\in\RR^d$ is hidden. The process of recovering $A$ is termed Component Analysis, and when the random variables $s_i$ are mutually independent, it is referred to as Independent Component Analysis (ICA). Common assumptions include mean zero for both $\sss$ and $\yy$, and unit variance for each $s_i$ (i.e., $\mathbb{E}(s_i^2)=1$). This leads to $\text{cov}(\yy) = AA^T$ and restricts the identifiability of $A$ to the set $\{QA\mid Q\in O(d)\}$, where $O(d)$ is the group of orthogonal matrices.

In the context of ICA, the assumption that the components of the vector $\sss$ are independent holds significant relevance, supported by Comon's theorem, since this ensures the identifiability of $A$ up to signed permutations. However, this independence requirement may prove overly restrictive in certain applications \cite{Matteson2017, aapoTopographic}. Cardoso \cite{Cardoso} pioneered an extension known as Independence Subspace Analysis (ISA) \cite{towards, ICASS, aapoEmergence} or multidimensional Independence Component Analysis \cite{Theis2004}. The fundamental concept behind ISA is to estimate an invertible matrix $A$ for a given observation $\yy\in\RR^d$ such that $\yy = A(\sss_1,\ldots, \sss_m )^T$, where $\sss_i$ represents mutually independent random vectors. 
While the traditional ICA scenario assumes one-dimensional vectors, ISA introduces flexibility in dimensionality. However, if mutual independence is the sole constraint imposed on $\sss$ without dimensionality restrictions, meaningful results from PICA become unattainable, necessitating additional constraints. {In general, ISA does not assume extra independence among the components of each random vector $\sss_i$. In this work, we introduce the concept of {Partitioned Independence Component Analysis} (PICA) where the vectors $\sss_i$ may satisfy additional conditions of independence.}

Recent developments by Mesters and Zwiernik \cite{nonica} have delved into scenarios where independence among variables $\sss$ are not required but some restrictions on certain cumulant tensors of $\sss$ are imposed. The authors explore minimal assumptions that relax the strict independence requirement while still guaranteeing the identifiability of the matrix $A$ up to signed permutations or within a finite subset of orthogonal matrices.
More specifically, they study zero restrictions on higher-order moments and cumulant tensors that arise from specific models or alternative relaxations of independence that assure the identifiability of $A$. They focus specifically on diagonal tensors and reflectionally invariant tensors motivated by mean independence.

In this article, we pursue two primary objectives. First, we aim to clearly summarize and present the essential results necessary to prove Comon's theorem, addressing the identifiability of $A$ in the ICA context. Additionally, we extend the analysis to the {ISA} 
case following the approach of Theis \cite{Theis2004}.
Second, we follow the approach of Mesters and Zwiernik in \cite{nonica} to analyze PICA from an algebraic perspective. This involves considering cumulant tensors with a specific block structure, resulting from partitioned independence in $\sss$, and analyzing the subsets within which $A$ can be identified.
{Our main results are Theorem \ref{thm:generalThmAlgPICA} and Corollary \ref{cor:reflInvTensor}, which generalizes the identifiability of $A$ for PICA case. Moreover, we introduce the independence of $\sss$ induced by a graph $G$ and give partial results on the identifiability of $A$ for these cases.
We anticipate that this work will serve as an introduction and inspiration for employing more algebraic tools to address open questions.}

This paper is organized as follows. In Section \ref{sec:ICA} we formally introduce ICA, explore the necessary restrictions one may make in the model and present Comon's result on the identifiability of $A$. In Section \ref{sec:cum}, we introduce moment and cumulant tensors and present their main properties. We also discuss the zero restrictions on the cumulants of $\sss$ given by certain dependencies on the entries of $\sss$ in Section \ref{sec:cumIndep}. Following, Section \ref{sec:pica} is devoted to the PICA model, which is formally introduced in Section \ref{sec:PICAmodel}. We continue by showing an algebraic perspective for component analysis in Section \ref{sec:connectionAlgStats}, and in the remaining sections we show how cumulant tensors play a role in the case of partitioned independent random vectors and alternative independence. Finally Section \ref{sec:statisticalPICA} is dedicated to the identifiability of $A$ for the PICA model.

\section{Independent Component Analysis}\label{sec:ICA}

In this section, we review the basic concepts of the classical independent component analysis (ICA). We introduce the mathematical formulation of ICA, which is given as a statistical estimation problem and we discuss some preliminary assumptions that may be made and under what assumptions this model can be estimated. 

\subsection{The ICA model}
In general, measurements cannot be isolated from noise. 
For example, consider a live music recording, this has the sounds of drums, guitar, and other instruments as well as the audience's applause. Therefore, it is challenging to have a clean recording of one instrument due to noise (audience's applause) and the other independent signals generated from the other instruments (guitar, bass, ...). We can define the measurements as a combination of multiple independent sources. The topic of separating these mixed signals is called {blind source separation}. The independent component analysis technique is one of the most used algorithms to solve the problem of extracting the original signals from the mixtures of signals. 

\ti{Source signals}, $\sss = (s_1,\hdots, s_d)$, vary over time and may be represented as time series where each step represents the amplitude of the signal $s_i$ at a certain time step. For us, $\sss\in\RR^d$ will be a hidden random vector. The source signals are assumed to be linearly mixed as $y_i = \sum_ja_{ij}s_j$. We will call the entries $y_i$ of $\yy\in\RR^d$, the \ti{mixture signals}. 
That is, \begin{equation}\label{eq:icastatement}
    \mathbf{y}=A\mathbf{s},
\end{equation} where 
$A\in \RR^{d\times d}$ is an unknown invertible matrix, called the \ti{mixing coefficient matrix}. The task of ICA is to recover $\sss$ given only the observations $\yy$.

Typically, ICA models make
certain assumptions and restrictions that allow for the identifiability of the mixing matrix $A$. We introduce them here and we will see their importance in Section \ref{sec:comon}.
It is assumed that the sources $s_i$ are mutually independent and that there is at most one source $s_i$ that has a Gaussian distribution. 
%
Under these assumptions, the ICA model is identifiable, that is, $A$ can be estimated up to permutation and sign changes of the rows, as we will see in Section \ref{sec:comon}.

It is also worth noting that some ambiguities or features will never be identifiable, since both $A$ and $\sss$ are unknown.
First, it is clear that we cannot identify the variances of the sources $s_i$. That is, any scalar multiplier $b_i$ in one of the variables $s_i$, that is $b_is_i$, can be cancelled by dividing the corresponding column $a_i$ of $A$ by the same scalar $b_i$, and giving the same vector $\yy$. As a consequence, we fix the variances of each $s_i$ and assume that $\EE(s_i^2)=1.$ 

The other feature that can not be identified is the order of the sources $s_i$. That is, consider a permutation matrix $P$, then one can consider the new random vector $P\sss$ and the matrix $AP^{-1}$ that gives rise to the same ICA model $(AP^{-1})(P\sss) = A\sss=\yy.$

Two preprocessing steps can be done on the data $\yy$ that simplify the recovery of the matrix $A$: \textit{centering} and \textit{whitening}.

\newpage
\par \textbf{Centering:} 
Centering the mixture variables $y_i$ means transforming them such that they have mean zero. That is, we can simply define
new mixture signals $$\yy'=\yy-\EE(\yy). 
$$
This also implies that the independent components $s_i$ have mean zero,
since $\EE(\yy) = A\EE(\sss).$
After estimating $A$ we may add the mean vector back to the independent components.
Note that this does not affect the estimation of $A$, as it remains the same after centering $\yy$. 
Therefore, from now on we will assume that both the mixing and the source variables $y_i$ and $s_i$ have mean zero.

\textbf{Whitening}: 
We say that a random vector $\yy$ is \ti{white} if it is uncorrelated with mean zero, that is, $\EE(\yy \yy^T)=\operatorname{Id}$. Then, by \ti{whitening} the signal $\yy$ we mean to linearly transform it into a new random vector $\yy'$ with identity covariance. 

One way to whiten $\yy$ is by considering the eigendecomposition of its covariance matrix $\Sigma = \EE [\yy \yy^T]$. Let $\Lambda$ be a diagonal matrix with the eigenvalues of $\Sigma$ and $V$ the orthogonal matrix of eigenvectors.
Then we can define the matrix $Z=V\Lambda^{-1/2}V^T$, and the random vector $\yy' = Z\yy$. It is straightforward to see that $\EE [\yy' \yy'^T] = Id.$ 
This process of decorrelating the signals $\yy$ is known as the {Principal Component Analysis} technique.

Finally, observe that if we consider the new random vector $\yy'$, the problem \eqref{eq:icastatement} becomes $$\yy' = Z\yy = ZA\sss = A'\sss, $$ where $A'$ is an orthogonal matrix. This means we can restrict our search for the new mixing matrix $A'$ to the group of orthogonal matrices.

To summarize, from now on, unless noted otherwise we will consider the following assumptions on $\yy=A\sss$. First, we assume that the variables $s_i$ are mutually independent, have unitary variance $\EE[s_i^2]=1$ and there is at most one Gaussian. We will also assume that both source and mixture signals $\sss$ and $\yy$ are centered, that is they have mean zero.
Finally, if $\yy$ is not assumed to be white, we can restrict the search of $A$ to the set $\{QA|Q\in O(n)\}.$ Otherwise, if $\yy$ is whitened, then $A\in O(n)$.

\par When performing ICA, the goal is to compute the \textit{unmixing matrix} $W:=A^{-1}$ such that \begin{equation}\label{eq:inverseICA}
    \sss = W\yy.
\end{equation}

{Even if it not the goal of this paper, it is worth pointing out that there exist different approaches to estimate the matrix $W$.} 
For instance, one first approach is based on non-Gaussianity measured by negentropy and kurtosis to find independent components that maximize the non-Gaussianity \cite{nongauss1,nongauss2}. As an alternative, ICA can be obtained by minimizing the mutual information \cite{mutualinfo1,mutualinfo2}. The independent components can also be estimated by using maximum likelihood estimation \cite{maxliklihood}. All of these methods search for the unmixing matrix $W$ and then project the whitened data onto the matrix to extract the independent signals. We refer the reader to \cite{icaintro} for a brief overview of these methods.

\subsection{Identifiability of ICA}\label{sec:comon}
In this subsection, we discuss the identifiability of the matrix $A$ from \eqref{eq:icastatement}. Our goal is to review and prove the classical \textit{Comon's Theorem} \cite{comon} which states that the matrix $A$ can be
identified up to permutation and sign transformations of its rows.

\begin{thm}[Comon \cite{comon}]\label{thm:comons}
    Let $\sss \in \RR^d$ be a vector with mutually independent components, of which at most one is Gaussian. Let $A$ be an orthogonal $d\times d$ matrix and $\yy$ the white vector $\yy = A \sss.$ The following are equivalent:
    \begin{enumerate}
        \item The components $y_i$ are pairwise independent.
        \item The components $y_i$ are mutually independent.
        \item $A$ is a signed permutation matrix.
    \end{enumerate}
\end{thm}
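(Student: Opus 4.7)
The plan is to prove the cycle $(3) \Rightarrow (2) \Rightarrow (1) \Rightarrow (3)$. The two directions $(3) \Rightarrow (2)$ and $(2) \Rightarrow (1)$ are immediate: if $A$ is a signed permutation then each $y_i$ equals $\pm s_{\sigma(i)}$ for some permutation $\sigma$, so mutual independence of the $s_i$ transfers to the $y_i$; and mutual independence trivially implies pairwise independence. All of the content lies in $(1) \Rightarrow (3)$.

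For the hard direction, the key tool I would invoke is the Darmois--Skitovich theorem: if $X_1,\ldots,X_d$ are independent and the two linear forms $L_1 = \sum_k \alpha_k X_k$ and $L_2 = \sum_k \beta_k X_k$ are independent, then every $X_k$ with $\alpha_k \beta_k \neq 0$ must be Gaussian. Applying this to each pair $(y_i, y_j) = (\sum_k a_{ik} s_k, \sum_k a_{jk} s_k)$, pairwise independence forces $s_k$ to be Gaussian whenever $a_{ik} a_{jk} \neq 0$. Since at most one $s_k$ is Gaussian, for every $i \neq j$ the set $\{k : a_{ik} a_{jk} \neq 0\}$ has at most one element, and if it is nonempty that element is the unique Gaussian index.

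Now I would combine this sparsity constraint with the orthogonality of $A$. Write $S_i = \{k : a_{ik} \neq 0\}$. If no $s_k$ is Gaussian, then the $S_i$ are pairwise disjoint; invertibility of $A$ forces $|S_i| \geq 1$, and since they fit inside $\{1,\ldots,d\}$, each $|S_i| = 1$. Orthonormality of the rows then forces the unique nonzero entry in each row to be $\pm 1$, so $A$ is a signed permutation. Suppose instead that, say, $s_1$ is Gaussian, and let $T = \{i : 1 \in S_i\}$. The row-orthogonality relation $\sum_k a_{ik} a_{jk} = 0$ for distinct $i,j \in T$ would require a second common index in $S_i \cap S_j$, contradicting $S_i \cap S_j \subseteq \{1\}$; hence $|T| \leq 1$, and $|T| = 0$ is ruled out because column $1$ cannot vanish in an invertible matrix. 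So exactly one row $i_0$ uses index $1$, the remaining $d-1$ rows have pairwise disjoint supports inside $\{2,\ldots,d\}$ (again each of size exactly one), and a final application of orthogonality pins $S_{i_0} = \{1\}$ and turns every nonzero entry into $\pm 1$.

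The main obstacle is the Darmois--Skitovich step, which is the only place where the Gaussianity restriction truly enters; its proof via characteristic functions is classical but nontrivial, so in the paper I would cite it (or a version of the Marcinkiewicz lemma on entire characteristic functions) rather than reprove it. Once Darmois--Skitovich is granted, the rest of the argument is a purely combinatorial support analysis combined with orthogonality, and the case distinction on whether a Gaussian source is present should be made explicit to avoid confusion about how the single ``exceptional'' column can still be handled.
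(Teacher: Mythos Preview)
Your proposal is correct and follows essentially the same route as the paper: both establish the cycle $(3)\Rightarrow(2)\Rightarrow(1)\Rightarrow(3)$, with the nontrivial implication $(1)\Rightarrow(3)$ resting on the Darmois--Skitovich theorem (stated in the paper as Corollary~\ref{cor:darmois}) combined with orthogonality of $A$. The only difference is stylistic: the paper argues by contradiction (if $A$ is orthogonal but not a signed permutation, it must have two nonzero entries in at least two distinct columns, whence two Gaussian sources), whereas you carry out a direct support analysis with an explicit case split on whether a Gaussian source is present; your version is in fact more fully justified, since the paper's claim that at least \emph{two} columns must have multiple nonzero entries is asserted without proof.
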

Comon's Theorem relies primarily on 
Corollary \ref{cor:darmois}. In order to be able to state this corollary, we need to prove Cram\'er's Lemma \cite{cramer_1970} (Lemma \ref{lem:Cramer}), Marcinkiewicz's Lemma \cite{Marcinkiewicz1939} (Lemma \ref{lem:Marc}) and Darmois \cite{darmois} and Skitovich \cite{skitovich} Theorem (Theorem \ref{thm:darm}). We leave then proof of Comon's Theorem for the end of this subsection. 
Throughout this section, we will mostly follow the proofs given in \cite{normal}.

We start by giving some preliminary definitions and lemmas.

\begin{defn}
    The characteristic function of a random variable $x$ is defined as $\phi_x(t) = \EE(e^{\mathbbm{i}t x}).$ When $\phi_x$ is complex-valued we use $\phi_x(z).$
\end{defn}

\begin{defn}
 Suppose that $f$ is an analytic function on some open subset $U$ of $\CC$. Let $F(z)$ be an analytic function on $V\supseteq U$ such that $f(z)=F(z)$ for all $z\in U$, then $F$ is an \textit{analytic extension of $f.$}
\end{defn}

\begin{rmk}[\cite{normal}]
    There are two essential facts for characteristic functions that will be needed later:
\begin{enumerate}
    \item The characteristic function of the standard Gaussian random variable $x$ is $\phi_x(z)=e^{-z^2/2}$ for all complex $z\in \CC.$ In other words, the analytic extension for a Gaussian random variable is $e^{-z^2/2}.$ 
    \item If a characteristic function can be represented in the form $\phi_x(t)=e^{at^2+bt+c}$ for some constants $a,b,c$, then $\phi_x$ corresponds to a Gaussian random variable.
\end{enumerate}
\end{rmk}

\begin{lem}
    If $x$ is a random variable such that $\EE \left(e^{\lambda x^2}\right)< \infty$ for some $\lambda >0,$ and the analytical extension $\phi_x(z)$ of the characteristic function of $x$ satisfies $\phi_x(z)\neq 0$ for all $z\in \CC$, then $x$ is Gaussian. \label{lem:finiteness}
    \begin{proof}
        We know $f(z)=\log \phi_x(z)$ is well defined and entire. Assuming $z=x+\ii y$, then $$\operatorname{Re}f(z)=\log |\phi(z)|\le \log  \EE(|e^{\ii (x+\ii y)x}|) \le  \log \left(\EE e^{|yx|}\right).$$ It is known that $\EE \left( e^{tx}\right) \le C e^{t^2/2\lambda}$, where $C=\EE \left(e^{\lambda x^2}\right)$, for all real $t$ [Problem 1.4, \cite{normal}]. Then, since $\lambda x^2 +t^2/2\lambda \ge t x$, we have $$\EE(e^{tx})\le  C e^{t^2/2\lambda}= \EE \left(e^{(\lambda x^2+t^2/2\lambda)}\right).$$ By using the two equations we have derived we find $\operatorname{Re}f(z)\le c +\frac{y^2}{2\lambda}$ where $c \in \RR.$ Hence, using the Liouville Theorem, we get that $f(z)$ is a quadratic polynomial in the variable $z$, so $x$ must be Gaussian. 
    \end{proof} 
\end{lem}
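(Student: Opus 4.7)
The overall plan is to show that under the hypotheses, the analytic extension $\phi_x$ is entire, nowhere zero, and of order at most $2$ with a quadratic bound on its real logarithm; then a Liouville-type argument forces $\log \phi_x$ to be a polynomial of degree at most $2$, after which the second item of the remark identifies $x$ as Gaussian. So the structure is (i) build an entire, nonvanishing extension, (ii) take its logarithm, (iii) bound the real part by a quadratic in the imaginary part, and (iv) invoke Liouville to conclude.

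First, I would justify that $\phi_x(z) = \EE(e^{\ii z x})$ is entire. For $z = u + \ii v \in \CC$, we have $|e^{\ii z x}| = e^{-v x}$, and the hypothesis $\EE(e^{\lambda x^2}) < \infty$ together with the pointwise inequality $\lambda x^2 + v^2/(2\lambda) \ge |vx|$ (rearranging $(\sqrt{\lambda}|x| - |v|/\sqrt{2\lambda})^2 \ge 0$) gives $\EE(e^{|vx|}) < \infty$. Thus $\phi_x$ is defined everywhere on $\CC$; holomorphicity then follows by differentiating under the integral (dominated convergence). Since $\phi_x$ is assumed nonvanishing and $\CC$ is simply connected, the branch $f(z) := \log \phi_x(z)$ is a well-defined entire function.

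Next comes the key growth estimate. Writing $z = u + \ii v$,
\begin{equation*}
\operatorname{Re} f(z) = \log|\phi_x(z)| \le \log \EE(|e^{\ii z x}|) = \log \EE(e^{-vx}),
\end{equation*}
by Jensen/the triangle inequality for integrals. Using the same AM-GM trick, $\lambda x^2 + t^2/(2\lambda) \ge tx$ for all real $t$, so $\EE(e^{tx}) \le C \, e^{t^2/(2\lambda)}$ with $C = \EE(e^{\lambda x^2})$. Applying this with $t = -v$ yields $\operatorname{Re} f(z) \le c_0 + v^2/(2\lambda)$, a quadratic bound in $|z|$.

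Finally, I would invoke a Liouville-type statement: an entire function whose real part is bounded above by a polynomial of degree $n$ must itself be a polynomial of degree at most $n$ (this follows from the Borel--Carathéodory inequality applied on growing disks, or equivalently from Cauchy's estimates after subtracting a polynomial). Here $n = 2$, so $f$ is a polynomial $az^2 + bz + c$, i.e.\ $\phi_x(z) = e^{az^2 + bz + c}$, and by the second fact recorded in the remark this characterizes $x$ as Gaussian. The step I expect to be most delicate is converting the quadratic bound on $\operatorname{Re} f$ into the polynomial conclusion: one must be careful that the Borel--Carathéodory inequality is being applied to control $|f(z)|$ (and its Taylor coefficients) uniformly on large disks from the one-sided bound on $\operatorname{Re} f$ alone. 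Once that is in place, the chain back through the remark is immediate.
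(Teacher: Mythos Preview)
Your proposal is correct and follows essentially the same route as the paper: define the entire logarithm $f=\log\phi_x$, bound $\operatorname{Re}f(z)$ by a quadratic in the imaginary part via $\EE(e^{tx})\le Ce^{t^2/(2\lambda)}$, and then apply a Liouville-type theorem to force $f$ to be quadratic. Your write-up is in fact slightly more careful than the paper's (you spell out why $\phi_x$ is entire and you name Borel--Carath\'eodory as the tool behind the ``Liouville'' step), but the argument is the same.
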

\begin{lem}
    If a random variable $x$ has finite exponential moment $\EE \left(e^{a|x|}\right)< \infty,$ where $a>0$, then its characteristic function is analytic in the strip $-a < \operatorname{Im}(z) <a.$ \label{lem:charAnalytic}
    \begin{proof}
        The analytic extension is explicitly $\phi_x(z) =\EE \left(e^{\ii zx}\right).$ We only need to check that $\phi_x(s)$ is differentiable in the strip $-a < \operatorname{Im} (z)<a.$ To do this we just write its series expansion (using Fubini to switch integration and summation): $$\phi_x(z) =\sum_{i=0}^\infty \ii^i \frac{\EE(x^i)z^i}{i!}$$
        which is absolutely convergent for all $-a < \operatorname{Im}(z) <a.$
    \end{proof}
\end{lem}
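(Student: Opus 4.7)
The plan is to exhibit $\phi_x(z)=\EE(e^{\ii z x})$ as the analytic extension, verify that this expectation is finite throughout the strip $-a<\operatorname{Im}(z)<a$, and then establish holomorphy by differentiating under the expectation (or, equivalently, via Morera's theorem). Since $\phi_x(t)=\EE(e^{\ii t x})$ for real $t$, this candidate automatically agrees with the characteristic function on the real axis, so the only real content is the well-definedness and holomorphy of the complex version.

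For well-definedness, write $z=u+\ii v$ with $|v|<a$ and observe that $|e^{\ii z x}|=|e^{\ii u x}|\cdot e^{-vx}=e^{-vx}\le e^{|v|\,|x|}$. Fixing any $b$ with $|v|\le b<a$, the hypothesis yields $\EE(e^{b|x|})\le \EE(e^{a|x|})<\infty$, so $\phi_x(z)$ is finite and in fact uniformly bounded on every compact substrip $|\operatorname{Im}(z)|\le b<a$.

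To prove holomorphy, I would differentiate under the expectation by dominated convergence. For $z$ in the strip and $h$ small enough that $z+h$ stays in a slightly larger substrip, a first-order expansion gives
$$\left|\frac{e^{\ii(z+h)x}-e^{\ii z x}}{h}\right|\le |x|\,e^{b'|x|}$$
for some $b'<a$. The envelope $|x|\,e^{b'|x|}$ is integrable because the polynomial factor can be absorbed into a slightly larger exponential dominated by $e^{a|x|}$. Dominated convergence then yields $\phi_x'(z)=\EE(\ii x\, e^{\ii z x})$ pointwise in the strip, proving complex differentiability. An equivalent path is Morera's theorem: for any closed triangle $\Delta$ in the strip, Fubini (justified by the uniform integrability above) gives $\oint_\Delta \phi_x(z)\,dz=\EE\bigl(\oint_\Delta e^{\ii z x}\,dz\bigr)=0$, since $z\mapsto e^{\ii z x}$ is entire for each fixed $x$.

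The only delicate point is the choice of dominating function: one must keep a cushion $b<a$ so that picking up an additional polynomial factor (from either the derivative bound or from Taylor-expanding the difference quotient) is still swallowed by the integrable envelope. Once that bookkeeping is in place, everything else is routine.
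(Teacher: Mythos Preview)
Your proof is correct and follows a somewhat different route from the paper. The paper expands $e^{\ii z x}$ as a power series in $z$, interchanges summation and expectation via Fubini, and argues that the resulting series $\sum_{i\ge 0}\ii^{i}\,\EE(x^{i})\,z^{i}/i!$ converges absolutely; this is a power-series route to analyticity. You instead work directly with the integral representation, bounding $|e^{\ii z x}|\le e^{|v|\,|x|}$ and then either differentiating under the expectation by dominated convergence or applying Morera's theorem together with Fubini. Your approach has the advantage of delivering the full strip $|\operatorname{Im}(z)|<a$ in one stroke, whereas a power-series expansion centered at $0$ naturally yields only the disk $|z|<a$ and, strictly speaking, needs an additional remark (re-expand about each real point, or invoke analytic continuation) to extend to the whole strip. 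Both arguments ultimately rest on the same domination $e^{b|x|}$ with $b<a$, so the underlying estimate is identical; the difference is purely in how holomorphy is then read off.
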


The following two lemmas are essential to prove Comon's theorem.

\begin{lem}[Cram\'er \cite{cramer_1970}]
    If $x_1, \hdots, x_d$ are independent random variables, and if $x=\sum_{i=1}^d a_i x_i$ is Gaussian, then all the variables $x_i$ for which $a_i\neq 0$ are Gaussian. \label{lem:Cramer}
    \begin{proof}
        Without loss of generality let $\EE x_i=0$. In this case, it is known that $\EE \left(e^{ax^2_i}\right)<\infty$ for $i\in [d]$ (Theorem 2.3.1 \cite{normal}). Thus, by Lemma \ref{lem:charAnalytic}, the corresponding characteristic functions $\phi_{x_i}$ are analytic. By the uniqueness of the analytic extension, $\prod_{i=1}^d \phi_{x_i}(z)= e^{-z^2/2}$ for all $z\in \CC$. Hence $\phi_{x_i}(z)\neq 0$ for all $z\in \CC$ and all $i\in [d]$, thus by Lemma \ref{lem:finiteness} all the characteristic functions correspond to Gaussian distributions.
    \end{proof}
\end{lem}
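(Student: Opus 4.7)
The plan is to reduce the lemma to an application of Lemma \ref{lem:finiteness}, which requires showing both that each $x_i$ with $a_i \neq 0$ has a finite Gaussian moment $\EE(e^{\lambda x_i^2}) < \infty$ for some $\lambda > 0$ and that its characteristic function admits an everywhere nonvanishing analytic extension to $\CC$.

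First I would perform a harmless normalization: replace each $x_i$ by $x_i - \EE(x_i)$ (this only multiplies $\phi_{x_i}$ by the nonvanishing factor $e^{-\ii z \EE(x_i)}$ and shifts the mean of $x$, preserving Gaussianity), and discard those indices for which $a_i = 0$ so that all remaining coefficients are nonzero. Write $\sigma^2 = \var(x)$. Since $x$ is centered Gaussian, $\EE(e^{tx}) = e^{\sigma^2 t^2 / 2}$ for every real $t$, and independence yields the factorization
\begin{equation*}
\prod_{i=1}^d \EE\!\left(e^{t a_i x_i}\right) \;=\; \EE(e^{tx}) \;=\; e^{\sigma^2 t^2/2}.
\end{equation*}
Each factor is strictly positive, so each is bounded above by $e^{\sigma^2 t^2/2}$; this is a sub-Gaussian MGF estimate for every marginal $a_i x_i$, and a standard Chernoff/Markov tail argument (Theorem 2.3.1 of \cite{normal}) converts it into the finite Gaussian moment $\EE(e^{\lambda x_i^2}) < \infty$ for all sufficiently small $\lambda > 0$.

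With this moment control, Lemma \ref{lem:charAnalytic} shows each $\phi_{x_i}$ is analytic on a strip around $\RR$, and in fact the series expansion used in its proof is absolutely convergent on all of $\CC$ because the sub-Gaussian MGF bound forces every moment $\EE(x_i^k)$ to satisfy $|\EE(x_i^k)| \leq C_k$ with $C_k$ growing no faster than a Gaussian's moments, so $\phi_{x_i}(z) = \sum_k \ii^k \EE(x_i^k) z^k / k!$ extends to an entire function. The factorization $\phi_x(z) = \prod_i \phi_{x_i}(a_i z)$ then holds for every $z \in \CC$ by the identity theorem. Since $\phi_x(z) = e^{-\sigma^2 z^2 / 2}$ is nowhere zero on $\CC$, none of the entire functions $\phi_{x_i}$ can vanish either. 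Applying Lemma \ref{lem:finiteness} to each $x_i$ yields that every $x_i$ with $a_i \neq 0$ is Gaussian.

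The main obstacle is the analytic bootstrap: Lemma \ref{lem:charAnalytic} by itself only delivers analyticity on a strip, and one must upgrade this to analyticity on the entire plane in order to conclude from $\phi_x(z) \neq 0$ on $\CC$ that each $\phi_{x_i}(z) \neq 0$ on $\CC$. The sub-Gaussian bound derived from the factorization of $\phi_x$ is the crucial input that makes this upgrade possible; once it is in place, the remaining steps are essentially bookkeeping.
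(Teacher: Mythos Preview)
Your proposal is correct and follows essentially the same route as the paper: both center the variables, invoke Theorem 2.3.1 of \cite{normal} to obtain $\EE(e^{\lambda x_i^2})<\infty$, deduce that each $\phi_{x_i}$ extends to an entire function, use the factorization of $\phi_x$ to conclude each $\phi_{x_i}$ is nonvanishing on $\CC$, and finish with Lemma~\ref{lem:finiteness}. You are in fact more careful than the paper on two points: you make explicit why the sub-Gaussian MGF bound upgrades the strip analyticity of Lemma~\ref{lem:charAnalytic} to entire analyticity (the paper simply asserts the product identity holds on all of $\CC$ ``by uniqueness of the analytic extension''), and you keep the coefficients $a_i$ in the factorization $\phi_x(z)=\prod_i\phi_{x_i}(a_i z)$, whereas the paper writes $\prod_i\phi_{x_i}(z)=e^{-z^2/2}$, tacitly rescaling.
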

\begin{lem}[Marcinkiewicz \cite{Marcinkiewicz1939}]\label{lem:Marc}
    The function $\phi(t)=e^{P(t)}$ where $P(t)$ is a polynomial of degree $n$, can be a characteristic function only if $n\le 2.$ Then $\phi$ corresponds to the characteristic function of a Gaussian distribution. 
    \begin{proof}
        First observe that the function $\phi(z)=e^{P(z)}$ for $z\in \CC$ defines an analytical extension of $\phi(t).$ Thus by Lemma \ref{lem:charAnalytic}, $\phi_x(z)= \EE \left(e^{\ii zx}\right)$ for $z\in \CC.$ By Cram\'er's Lemma \ref{lem:Cramer}, it suffices to show that $\phi_x(z)\phi_{x}(-z)$ corresponds to a Gaussian distribution. We have that, $$\phi_x(z)\phi_{x'}(-z)=\EE \left(e^{\ii z(x-x')}\right),$$
        where $x'$ is an independent copy of $x$. If we show that $x-x'$ is Gaussian then we can use Lemma \ref{lem:Cramer} to infer that $x$ is Gaussian. That is, consider, $$\phi_x(z)\phi_{x'}(-z)=e^{R(z)},$$
        where the polynomial $R(z)=P(z)+P(-z)$ has only even terms, i.e. $R(z) = \sum_{i=0}^{\left \lfloor{n/2}\right \rfloor } p_{2i} z^{2i}$ where $p_{2i}$ are the coefficients.
        
         We know that $\phi_{x}(t)\phi_{x'}(-t) =|\phi(t)|^2$ since characteristic functions are Hermitian and it is a real number for some real $t\in \RR$. Moreover, $R(t) = \log |\phi_x(t)|^2$ is real too. It follows that the coefficients $p_{2i}$ of the polynomial $R(t)$ are real. Furthermore we will show that $p_n<0$ since $|\phi_x(t)|^2<1$ for all real $t.$ Suppose that the highest coefficient $p_n$ is positive. Then for $t>1$ we have $$e^{P(t)} \ge e^{p_n t^{2n}-nBt^{2n-2}}= e^{t^2n\left(p_n -nB/t^2\right)}\to \infty \text{ as }t\to \infty$$
        where $B$ is the largest coefficient (in value) of $R(t).$ Hence, it follows that $p_n$ is negative. For simplicity, we denote $p_n=-\gamma^2$. 
        
        \par Let $w(N)=\sqrt[2n]{N} e^{i\pi /2n}$. In this case, $w^{2n}= -N.$ For $N>1$ we have \begin{align*}
            |\phi_x(w)\phi_{x'}(-w)|&=\left|e^{R(w)}\right| = \left|\exp \left(\sum_{i=0}^{\left \lfloor{n/2}\right \rfloor -1} p_i w^{2i}+\gamma^2 N\right)\right|\ge\exp \left( \gamma^2 N -B \sum_{i=0}^{\left \lfloor{n/2}\right \rfloor -1}|w^{2i}|\right) \\
            &\ge \exp\left( \gamma^2 N- nBN^{1-1/n}\right)=\exp\left(N\left(\gamma^2 - \frac{nB}{N^{1/n}}\right)\right). 
        \end{align*}
        Hence, we have $|\phi_x(w)\phi_{x'}(-w)|\ge e^{N(\gamma^2-\epsilon_N)}$ for all large enough real $N$ where $\epsilon_N\to 0$ as $N\to \infty.$
        
        \par Now we will look at an inequality for the other side of $|\phi_x(w)\phi_{x'}(-w)|.$ Using the explicit representation of $\phi_x(w)$, Jensen's inequality, and the independence of $x$, $x'$ we get 
        \begin{align*}
            |\phi_x(w)\phi_{x'}(-w)|&=\left|\EE\left( e^{\ii w(x-x')}\right)\right|\le \EE\left(e^{-\mathfrak{s}\sqrt[2n]{N}(x-x')}\right)= \left|\phi_{x}\left(\ii \mathfrak{s}\sqrt[2n]{N}\right)\phi_{x'}\left(-\ii \mathfrak{s}\sqrt[2n]{N}\right)\right|
        \end{align*}
        where $\mathfrak{s}= \sin (\pi/2n)$ so $\operatorname{Im}(w)= \mathfrak{s}\sqrt[2n]{N}$. Thus $|\phi_x(w)\phi_{x'}(-w)|\le |e^{R(\ii \mathfrak{s} \sqrt[2n]{N})}|.$
        
         Note that since $R$ only has even terms then $R(\ii \mathfrak{s} \sqrt[2n]{N}) \in \RR.$ For $N>1$, $$R(\ii \mathfrak{s} \sqrt[2n]{N})= -(-1)^n \gamma^2 N \mathfrak{s}^{2n} +\sum_{i=0}^{n-1} (-1)^i p_i \mathfrak{s}^{2i} N^{2i/n}\le \gamma^2 N \mathfrak{s}^{2n}+nBN^{1-1/n}= N\left(\gamma^2 \mathfrak{s}^{2n}+\frac{nB}{N^{1/n}}\right).$$
        It follows that $$e^{N(\gamma^2 -\epsilon_N)} \le |\phi_x(w)\phi_{x'}(-w)|\le e^{N(\gamma^2 \mathfrak{s}^{2n}+\epsilon_N)}$$
        where $\epsilon_N\to 0.$ As $N\to \infty$ this yields a contradiction unless $\mathfrak{s}^{2n}\ge 1$, i.e. $\sin (\pi/2n) = \pm 1.$ We only encounter this possibility when $n=2$, so $R(t)=p_0 +p_1t^2$ is of degree $2.$ Since $R(0)=0$ and $p_1 = -\gamma^2$ we have $R(t) = -(\gamma t)^2$ for all $t,$ thus $\phi_x(t)\phi_{x'}(-t) = e^{-(\gamma t)^2}$ is Gaussian and so by Lemma \ref{lem:Cramer}, $\phi_x(t)$ corresponds to a Gaussian distribution.
    \end{proof}
\end{lem}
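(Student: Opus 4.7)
The plan is to argue by contradiction: suppose $\deg P = n \geq 3$, and derive incompatible upper and lower bounds on $|\phi(w)|$ at carefully chosen complex points $w$. The entire extension $\phi(z) = e^{P(z)}$ puts the full complex-analytic behaviour of $\phi$ at our disposal, while the requirement that $\phi$ arise from a random variable forces $|\phi(t)| \le 1$ on the real line and supplies an integral representation $\phi(w) = \EE(e^{\ii w x})$ for complex $w$. The contradiction will come from pushing these two descriptions against each other in a direction where the algebraic growth of $e^{P}$ outpaces the probabilistic growth.

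First I would symmetrize. Since characteristic functions satisfy $\overline{\phi(t)} = \phi(-t)$, the product $|\phi(t)|^2 = \phi(t)\phi(-t)$ is itself the characteristic function of $x - x'$ for an independent copy $x'$, and equals $e^{R(t)}$ with $R(t) = P(t)+P(-t)$. The polynomial $R$ has only even-degree terms and real coefficients; boundedness of $|\phi|^2 \le 1$ on $\RR$ forces the leading coefficient of $R$ to be strictly negative, which I will write as $-\gamma^2$ with $\gamma>0$ (otherwise $R(t) \to +\infty$ along some real direction).

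Next I would evaluate at $w_N = N^{1/(2n)} e^{\ii\pi/(2n)}$ so that $w_N^{2n} = -N$. From the algebraic formula, the leading term of $R(w_N)$ equals $\gamma^2 N$ while the lower-order terms grow only like $O(N^{1-1/n})$, yielding a lower bound $|\phi(w_N)\phi(-w_N)| \ge \exp(N(\gamma^2 - \varepsilon_N))$ with $\varepsilon_N \to 0$. For the \emph{upper} bound I start from the integral representation $\phi(w_N)\phi(-w_N) = \EE(e^{\ii w_N(x-x')})$ and apply Jensen's inequality to the modulus, yielding $\EE(e^{-\operatorname{Im}(w_N)(x-x')})$, i.e.\ $|\phi(\ii\mathfrak{s} N^{1/(2n)})\phi(-\ii\mathfrak{s} N^{1/(2n)})|$ with $\mathfrak{s} = \sin(\pi/(2n))$. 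Since $R$ has only even terms this evaluates on the real axis via $R$ and is bounded above by $\exp(N(\gamma^2 \mathfrak{s}^{2n} + \varepsilon_N))$.

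Letting $N \to \infty$ forces $\gamma^2 \le \gamma^2 \mathfrak{s}^{2n}$, i.e.\ $\sin(\pi/(2n)) \ge 1$, which holds only for $n \le 2$. In that case $R(t) = -\gamma^2 t^2$, so $x-x'$ is Gaussian directly from the form of its characteristic function, and Cram\'er's Lemma \ref{lem:Cramer} applied to the sum $x + (-x')$ then yields that $x$ itself is Gaussian. The main obstacle I expect is the growth comparison: one has to verify that the non-leading terms of $R$ contribute only lower-order $O(N^{1-1/n})$ corrections on \emph{both} sides so that the leading exponential rate matches at precisely order $N$, since only then does the factor $\mathfrak{s}^{2n}$ drive the contradiction; the choice of complex direction $w_N$ is the one that makes the algebraic lower bound sharp along the real part while leaving a genuinely different Jensen-based bound along the imaginary part.
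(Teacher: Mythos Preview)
Your proposal is correct and follows essentially the same route as the paper: symmetrize to the even polynomial $R(z)=P(z)+P(-z)$ with real coefficients and negative leading term $-\gamma^2$, evaluate along the complex ray $w_N=N^{1/(2n)}e^{\ii\pi/(2n)}$ to obtain an algebraic lower bound of order $e^{N\gamma^2}$, bound from above via Jensen and the integral representation to get $e^{N\gamma^2\mathfrak{s}^{2n}}$ with $\mathfrak{s}=\sin(\pi/(2n))$, and force $\mathfrak{s}\ge1$, then conclude by Cram\'er. The only point to tidy is the bookkeeping of $n$ versus $\deg R$ (the paper is itself a bit loose here): since $R$ is even of degree $2m$, the exponent in $w_N$ and in $\mathfrak{s}$ should be governed by $m$, and the contradiction collapses precisely when $m=1$, i.e.\ $\deg R=2$; your ``$n\le2$'' is the correct conclusion for $\deg P$.
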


Furthermore, the following corollary of Lemma \ref{lem:Marc} will be relevant to our discussion. 
\begin{cor}
    There does not exist any polynomial cumulant generating functions of degree greater than $2.$
\end{cor}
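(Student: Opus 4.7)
The plan is to deduce the corollary as an essentially immediate consequence of Marcinkiewicz's Lemma (Lemma \ref{lem:Marc}), using nothing more than the defining relationship between the cumulant generating function and the characteristic function. Recall that if $K_x(t) = \log \EE(e^{t x})$ is the cumulant generating function of a random variable $x$ (whenever it exists in a neighborhood of $0$), then the characteristic function is obtained via the substitution $\phi_x(t) = \EE(e^{\ii t x}) = e^{K_x(\ii t)}$, so the two functions differ only by a rescaling of the argument by $\ii$.

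Given this, I would argue by contradiction: suppose there is some random variable $x$ whose cumulant generating function $K_x$ is a polynomial of degree $n > 2$. Define $P(t) := K_x(\ii t)$; then $P$ is also a polynomial in $t$ of degree exactly $n$, since replacing $t$ by $\ii t$ only multiplies each coefficient by a nonzero power of $\ii$ and does not alter the degree. Consequently $\phi_x(t) = e^{P(t)}$ with $\deg P = n > 2$. But Lemma \ref{lem:Marc} asserts that a characteristic function of the form $e^{P(t)}$ with $P$ a polynomial must have $\deg P \le 2$, a direct contradiction. This forces $n \le 2$, which proves the corollary.

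I do not expect any genuine obstacle here, as the entire content of the corollary is the translation of Marcinkiewicz's statement about characteristic functions of exponential-polynomial form into the language of cumulant generating functions. The only point that needs a brief justification is that $K_x$ being polynomial guarantees that the characteristic function admits the global representation $\phi_x(t) = e^{P(t)}$ on all of $\RR$, rather than only near $0$; this follows because a polynomial is entire, so $e^{K_x(\ii t)}$ is an entire function agreeing with the (analytic) characteristic function on a neighborhood of $0$, and hence on the whole real line by the identity theorem (which also justifies applying Lemma \ref{lem:Marc}, whose hypothesis is precisely this global form).
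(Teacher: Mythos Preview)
Your argument is correct and matches the paper's approach: the paper states this result as an immediate corollary of Lemma~\ref{lem:Marc} without giving a separate proof, and your derivation---passing from the cumulant generating function $K_x(t)$ to the characteristic function $\phi_x(t)=e^{K_x(\ii t)}=e^{P(t)}$ and invoking Marcinkiewicz---is exactly the intended one-line deduction.
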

We need one final lemma before proving Theorem \ref{thm:darm}.
\begin{lem} \label{lem:darLem}
    Suppose that the functions $f_1,\hdots, f_n$ are differentiable of all orders and we have that $$\sum_{i=1}^n f_i(\alpha_ix+\beta_iy) = g(x)+h(y) \text{ for all }x,y$$
    where $\alpha_i$, $\beta_i$ are non-zero constants such that for $i\neq j,$
    $$\alpha_i\beta_j-\alpha_j\beta_i\neq 0.$$
    Then, all the functions $f_i$ are polynomials with degree at most $n.$
    \begin{proof}
        We can automatically see that $g$ and $h$ will be differentiable for any order as well. Suppose that there are small variations $\delta_1^{(1)}$ and $\delta_2^{(1)}$ such that $\alpha_n \delta_1^{(1)}+\beta_n \delta_2^{(1)}=0$ then, we can redefine $x$ and $y$ as 
 %
        $$x \leftarrow x+ \delta_1^{(1)} \qquad
        y \leftarrow y+ \delta_2^{(1)}$$
        such that $\alpha_n x+\beta_n y$ remains constant.
        Simultaneously, the arguments of all the other $f_i 's$ have changed by a small value $\epsilon_i^{(1)}$ which is non-zero. Hence, subtracting the new equation from the original we get $$\sum_{i=1}^{n-1}\Delta_{\epsilon_i^{(1)}}f_i(\alpha_ix+\beta_iy) =g_1(x)+h_1(y) \text{ for all }x,y$$
        where $\Delta_kf(x) = f(x+k)-f(x)$ is the finite difference of $f(x)$.
        \par We can see that the last equation is similar to the original one with the absence of $f_n.$ Repeating this process we get, $$\Delta_{\epsilon_1^{n-1}} \hdots \Delta_{\epsilon_1^{1}} f_1(\alpha_1x+\beta_1y) =g_{n-1}(x)+h_{n-1}(y) \text{ for all }x,y. $$
        Finally, we do this two more times (once for small variation only in $x$ and once for small variation only in $y$) and we arrive at $$\Delta_{\epsilon_1^{n+1}} \hdots \Delta_{\epsilon_1^{1}} f_1(\alpha_1x+\beta_1y) =0 \text{ for all }x,y. $$
        Specifically, the $(n+1)$-th order difference of the function $f_1$, and thus its $(n+1)$-th order derivative, are zero. Hence $f_1$ is a polynomial of degree at most $n.$ This process can be repeated for all of the other functions $f_i$'s.
    \end{proof}
\end{lem}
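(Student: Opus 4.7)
The plan is to avoid finite differences entirely and work instead with smooth directional derivatives, leveraging the fact that any function of the single variable $\alpha_k x + \beta_k y$ is annihilated by the directional operator
\[ D_k := \beta_k \partial_x - \alpha_k \partial_y. \]
Three observations drive the proof. First, $D_k$ kills $f_k(\alpha_k x + \beta_k y)$ identically, since $\beta_k \alpha_k f_k' - \alpha_k \beta_k f_k' = 0$. Second, for $i \neq k$ it sends $f_i(\alpha_i x + \beta_i y)$ to $(\beta_k \alpha_i - \alpha_k \beta_i)\, f_i'(\alpha_i x + \beta_i y)$, and the scalar coefficient is nonzero by the hypothesis $\alpha_i \beta_k - \alpha_k \beta_i \neq 0$, so the result is again a function of $\alpha_i x + \beta_i y$. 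Third, $D_k$ applied to a separable function $g(x) + h(y)$ produces $\beta_k g'(x) - \alpha_k h'(y)$, which is again separable.

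I would then apply the operators $D_2, D_3, \ldots, D_n$ in succession to the original identity. Each application eliminates one of the functions $f_i$, differentiates the others once, and preserves separability of the right-hand side. After these $n-1$ applications one is left with
\[ C \cdot f_1^{(n-1)}(\alpha_1 x + \beta_1 y) = G(x) + H(y), \]
where $C = \prod_{i=2}^n (\beta_i \alpha_1 - \alpha_i \beta_1) \neq 0$ and $G, H$ are suitable iterated derivatives of $g$ and $h$. Now applying $\partial_x \partial_y$ to both sides kills the separable right-hand side and yields
\[ C \alpha_1 \beta_1\, f_1^{(n+1)}(\alpha_1 x + \beta_1 y) = 0 \]
for all $x, y$. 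Since the argument $\alpha_1 x + \beta_1 y$ ranges over all of $\RR$, this forces $f_1^{(n+1)} \equiv 0$, so $f_1$ is a polynomial of degree at most $n$. The same argument with the indices permuted handles each $f_i$.

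The main obstacle is bookkeeping rather than genuine difficulty: one must verify that the composition $D_2 \cdots D_n$ really leaves the right-hand side in the separable form $G(x) + H(y)$ with the correct nonzero leading coefficient on $f_1^{(n-1)}$, and that no coefficient one divides by vanishes. Both facts follow from repeated use of the hypothesis $\alpha_i \beta_j - \alpha_j \beta_i \neq 0$, which guarantees that the $D_k$ are linearly independent directional derivatives. I prefer this differential approach to finite differences because the operators $D_k$ commute as constant-coefficient linear operators, so no delicate tracking of how incremental shifts interact is required.
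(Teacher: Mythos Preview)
Your proof is correct. It is the infinitesimal version of the paper's argument: where the paper eliminates $f_k$ by taking a finite difference along the direction $(\delta_1,\delta_2)$ with $\alpha_k\delta_1+\beta_k\delta_2=0$, you eliminate $f_k$ by applying the directional derivative $D_k=\beta_k\partial_x-\alpha_k\partial_y$ along that same direction. In both proofs the remaining $f_i$ pick up the nonzero factor $\alpha_i\beta_k-\alpha_k\beta_i$, the right-hand side stays separable, and after $n-1$ eliminations plus two more steps (the paper shifts in $x$ alone then in $y$ alone; you apply $\partial_x\partial_y$) one obtains $f_1^{(n+1)}\equiv 0$. Your differential version is slightly cleaner under the stated hypothesis that the $f_i$ are $C^\infty$: the operators $D_k$ commute automatically and you never need the passage from ``all $(n{+}1)$-th finite differences vanish'' to ``the $(n{+}1)$-th derivative vanishes.'' The paper's finite-difference version, on the other hand, is the classical formulation and has the advantage of extending to the setting where no smoothness is assumed (as is actually needed downstream in the Darmois--Skitovich theorem, where one works with $\log|\phi|$ before knowing it is smooth).
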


The following theorem states that if two linear combinations of independent random variables are independent, then the variables are Gaussian. This will allow us to state the corollary needed to prove Comon's Theorem.
\begin{thm}[Darmois \cite{darmois}, Skitovich\cite{skitovich}] \label{thm:darm}
    Let $y_1$ and $y_2$ be two random variables defined as $$y_1= \sum_{i=1}^n \alpha_i x_i\quad \quad y_2= \sum_{i=1}^n \beta_i x_i$$
    where $x_i$ are independent random variables. Then if $y_1$ and $y_2$ are independent, all variables $x_i$ for which $\alpha_i\beta_j\neq 0$ are Gaussian. 
    \begin{proof}
        Without loss of generality, we can assume $\alpha_i \beta_j -\alpha_j \beta_i\neq 0$ for all $i\neq j$ (if this does not hold then we can add $y_1$ and $y_2$ and use Lemma \ref{lem:Cramer} to prove Gaussianity of both). 
        \par Now consider $$\phi_{y_1 y_2}(s,t)= \EE\left(e^{\ii(sy_1+t y_2)}\right)= \EE \left(e^{\ii\sum_{i}(\alpha_is+\beta_i t)x_i}\right)=\prod_{i=1}^n \phi_{x_i}(\alpha_i s+ \beta_i t).$$
        The last equation comes from the independence of the $x_i$'s. We also know $y_1$ and $y_2$ are independent so $$\phi_{y_1y_2}(s,t) = \phi_{y_1}(s)\phi_{y_2}(t),$$
        so it follows $$\prod_{i=1}^n \phi_{x_i}(\alpha_i s+ \beta_i t)=\phi_{y_1}(s)\phi_{y_2}(t).$$
        Let $\psi_{x_i}(s)=\log \phi_{x_i}(s)$, then taking the logarithm of the previous expression we get
        $$\sum_{i=1}^n \psi_{x_i}(\alpha_i s+ \beta_i t)=\psi_{y_1}(s)+\psi_{y_2}(t).$$
        
        \par Move the terms of the left side for which $\alpha_i\beta_i=0$ to the right side, then by Lemma \ref{lem:darLem} $\psi_{x_i}$ must be a polynomial for those $i$ such that $\alpha_i\beta_i\neq0$. We conclude then that those $x_i$, with $\alpha_i\beta_i\neq 0$, must be Gaussian random variables using Lemma \ref{lem:Marc}.
    \end{proof}
\end{thm}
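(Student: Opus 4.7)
The plan is to reduce the problem to a functional equation satisfied by the cumulant generating functions of the $x_i$'s, and then apply Lemmas \ref{lem:darLem} and \ref{lem:Marc} to force Gaussianity. First I would compute the joint characteristic function of $(y_1, y_2)$ in two ways. Using mutual independence of the $x_i$'s,
$$\phi_{y_1,y_2}(s,t) = \EE\left(e^{\ii(sy_1 + ty_2)}\right) = \prod_{i=1}^n \phi_{x_i}(\alpha_i s + \beta_i t),$$
while the assumed independence of $y_1$ and $y_2$ gives $\phi_{y_1,y_2}(s,t) = \phi_{y_1}(s)\phi_{y_2}(t)$. Setting $\psi_{x_i} := \log \phi_{x_i}$ and taking logarithms yields the functional equation
$$\sum_{i=1}^n \psi_{x_i}(\alpha_i s + \beta_i t) = \psi_{y_1}(s) + \psi_{y_2}(t).$$

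Next I would perform two reductions to bring this equation into the form required by Lemma \ref{lem:darLem}. Any index $i$ with $\alpha_i\beta_i = 0$ contributes a function of $s$ alone or of $t$ alone, and so can be absorbed into the right-hand side, leaving on the left only indices with $\alpha_i\beta_i \neq 0$. Moreover, whenever two indices $i \neq j$ have proportional coefficient vectors $(\alpha_i,\beta_i)\parallel(\alpha_j,\beta_j)$, the contribution $\psi_{x_i}(\alpha_i s + \beta_i t)+\psi_{x_j}(\alpha_j s + \beta_j t)$ can be rewritten as a single function of $\alpha_i s + \beta_i t$, effectively collapsing the two indices. After these reductions, the remaining coefficient pairs satisfy $\alpha_i\beta_j - \alpha_j\beta_i \neq 0$ for all distinct $i,j$, and Lemma \ref{lem:darLem} applies to conclude that each of the (possibly collapsed) $\psi_{x_i}$ is a polynomial. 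Marcinkiewicz's Lemma \ref{lem:Marc} then forces every such polynomial cumulant generating function to have degree at most $2$, hence to correspond to a Gaussian distribution; in the collapsed case, Cram\'er's Lemma \ref{lem:Cramer} finally recovers Gaussianity of each individual $x_i$ from Gaussianity of the combined sum.

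The main obstacle I anticipate is the careful bookkeeping in the reduction step together with the differentiability hypothesis of Lemma \ref{lem:darLem}: $\psi_{x_i} = \log \phi_{x_i}$ is a priori only well defined and smooth on a neighborhood of the origin where $\phi_{x_i}$ does not vanish, so the functional equation holds only locally. One must therefore apply Lemma \ref{lem:darLem} on this neighborhood to conclude that $\psi_{x_i}$ agrees with a polynomial near $0$, and then invoke analyticity of the characteristic function on the relevant strip (Lemma \ref{lem:charAnalytic}) to promote the local polynomial identity to the global one needed for Lemma \ref{lem:Marc}.
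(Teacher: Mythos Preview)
Your proposal is correct and follows essentially the same route as the paper: compute the joint characteristic function two ways, take logarithms to obtain the additive functional equation, absorb the $\alpha_i\beta_i=0$ terms into the right-hand side, reduce to distinct slopes so that Lemma~\ref{lem:darLem} applies, and finish with Marcinkiewicz and Cram\'er. Your treatment of the proportional-coefficient reduction (collapsing indices and then unpacking via Cram\'er) and your remark on the local-versus-global domain of $\log\phi_{x_i}$ are in fact more explicit than the paper's own handling of these points.
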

\begin{cor} \label{cor:darmois}
    Let $\yy\in\RR^d$ and $\sss\in\RR^d$ be two random vectors such that $\yy= A\sss$ where $A$ is a square matrix. Furthermore, suppose that $\sss$ has mutually independent components and that $\yy$ has pairwise independent components. If $A$ has two non-zero entries in the same column $j$, then $s_j$ is either Gaussian or constant.
\end{cor}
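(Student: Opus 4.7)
The plan is to apply the Darmois--Skitovich theorem (Theorem \ref{thm:darm}) directly, using the two rows of $A$ that contain the nonzero entries in column $j$ to build two linear combinations of the independent sources.

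More precisely, I would first select the two indices responsible for the hypothesis: by assumption there exist distinct rows $k, l$ with $a_{kj} \neq 0$ and $a_{lj} \neq 0$. I would then isolate the two observed variables
\[
y_k = \sum_{i=1}^d a_{ki}\, s_i, \qquad y_l = \sum_{i=1}^d a_{li}\, s_i,
\]
which are both linear combinations of the mutually independent variables $s_1, \ldots, s_d$. By the pairwise-independence assumption on $\yy$, the pair $(y_k, y_l)$ is independent. This is exactly the hypothesis of Theorem \ref{thm:darm} with $\alpha_i = a_{ki}$, $\beta_i = a_{li}$. Applying that theorem, every $s_i$ with $a_{ki} a_{li} \neq 0$ is Gaussian; taking $i = j$ yields that $s_j$ is Gaussian.

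The only subtlety is the appearance of ``or constant'' in the statement. This accounts for the degenerate case: a constant random variable has characteristic function $\phi(t) = e^{\ii c t}$, a special (degenerate) instance of a Gaussian with variance zero, and the proof of Theorem \ref{thm:darm} (via Lemma \ref{lem:Marc} and the polynomial cumulant generating function) does not distinguish it from a proper Gaussian. So once Darmois--Skitovich is invoked, one only needs to observe that its conclusion includes constants as the zero-variance endpoint.

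I do not expect a real obstacle here: the corollary is essentially a rebranding of Theorem \ref{thm:darm} for the ICA setup, and the entire argument reduces to identifying the two rows, writing down the corresponding linear combinations, and verifying that the hypotheses of Darmois--Skitovich are met. The only thing worth stating carefully in the write-up is the reduction to the Darmois--Skitovich setting and the remark that the degenerate (constant) case is covered.
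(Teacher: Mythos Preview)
Your proposal is correct and is exactly the intended argument: the paper states this result as an immediate corollary of Theorem~\ref{thm:darm} without giving a separate proof, and your reduction—picking the two rows $k,l$ with $a_{kj},a_{lj}\neq 0$, noting that $y_k$ and $y_l$ are independent linear forms in the mutually independent $s_i$, and invoking Darmois--Skitovich—is precisely what is meant. Your remark on the ``or constant'' clause as the degenerate (zero-variance) case is also the right reading.
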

Now we may present the proof of Comon's Theorem, that is, Theorem \ref{thm:comons}.
\begin{proof}[Proof of Theorem \ref{thm:comons}]
    We can see that $(3)\implies (2)$ since that would just mean $\yy$ is some permutation of $\sss$ up to sign, so $\yy$ would be mutually independent. $(2)\implies (1)$ is by definition. 
    \par It remains to prove $(1)\implies (3)$. Hence, we may assume that $\yy$ has pairwise independent components. Suppose for sake of contradiction that $A$ is not a sign permutation. We know that $A$ is orthogonal, thus it necessarily has two non-zero entries in at least two different columns. Using Corollary \ref{cor:darmois} twice we get that $\yy$ has at least two Gaussian components, which is a contradiction with our assumption.
\end{proof}

\section{Cumulant Tensors} \label{sec:cum}
In this section, we give a brief introduction to cumulant tensors. The reader is referred to \cite{tensorMethods,Jacod_Protter_2003,Zwiernik2015} for more details.

\subsection{Symmetric tensors}

{In this section we introduce symmetric tensors and state some properties that will be needed later in this work.}

A symmetric tensor $T\in \RR^{d\times \hdots \times d}$ is a tensor that is invariant under arbitrary permutation of its indices:
$T_{i_1\ldots i_r}=T_{i_{\sigma(1)},i_{\sigma (2)},\ldots ,i_{\sigma (r)}}$ for any permutation $\sigma$ of $[r]$.
We will denote the space of real symmetric $d\times \hdots \times d$ tensors of order $r$ by $S^r(\RR^d)\subset \RR^{d\times \hdots \times d}$. The unique entries $T\in S^r(\RR^d)$ are denoted as $T_{i_1,\hdots, i_r}$ where $1\le i_1\le \hdots \le i_r\le d.$ 

Given a matrix $A\in \RR^{d\times d}$ and a symmetric tensor $T\in S^r(\RR^d)$ we can define the standard \emph{multilinear operation} as: $$(A\bullet T)_{i_1,\hdots, i_r}=\sum_{j_1,\hdots, j_r =1}^d A_{i_1j_1}\hdots A_{i_rj_r}T_{j_1\hdots j_r}$$
for all $(i_1,\hdots, i_r)\in [d]^r$. Note that $A\bullet T\in S^r(\RR^d)$, and so 
$A\in \RR^{d\times d}$ acts on the space of symmetric tensors $S^r(\RR^d).$

There exist a bijection between the space of symmetric tensors $S^r(\RR^d)$ {and} the homogenous polynomials of degree $r$ in variables $\xx=(x_1, \ldots, x_{d} )$:
\begin{equation}
    f_T(\xx) = \sum_{i_1,\ldots i_r = 1}^{d} T_{i_1\ldots i_r}x_{i_1}\cdots x_{i_r}.
\end{equation}
From this bijection, it is easy to see that $S^r(\RR^d)$ has dimension $\binom{d+r-1}{r}$.
We state the following lemma that will be used later:
\begin{lem}{\cite{nonica}}\label{lem:hessfAT}
 If $T\in S^r(\RR^d)$ and $A\in \RR^{d\times d}$, then $f_{A\bullet T}(x) = f_T(A^T x)$ and $\nabla^2 f_{A\bullet T} = A \nabla^2f_T(A^Tx)A^T.$
\end{lem}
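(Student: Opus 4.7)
The plan is to verify both identities by direct manipulation: the first by expanding the definitions and using distributivity of the product-sum, and the second by applying the multivariate chain rule twice.

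First I would establish the polynomial identity $f_{A\bullet T}(\xx) = f_T(A^T \xx)$. Starting from the definition of $f_{A\bullet T}$, I substitute the multilinear action:
\begin{align*}
f_{A\bullet T}(\xx) &= \sum_{i_1,\ldots,i_r=1}^{d} (A\bullet T)_{i_1\ldots i_r}\, x_{i_1}\cdots x_{i_r} \\
&= \sum_{i_1,\ldots,i_r=1}^{d}\sum_{j_1,\ldots,j_r=1}^{d} A_{i_1 j_1}\cdots A_{i_r j_r}\, T_{j_1\ldots j_r}\, x_{i_1}\cdots x_{i_r}.
\end{align*}
Swapping the order of summation and grouping factors by index $k$, the inner sums decouple into
\[
\sum_{j_1,\ldots,j_r=1}^{d} T_{j_1\ldots j_r} \prod_{k=1}^{r}\left(\sum_{i_k=1}^{d} A_{i_k j_k}\, x_{i_k}\right) = \sum_{j_1,\ldots,j_r=1}^{d} T_{j_1\ldots j_r}\prod_{k=1}^{r}(A^T\xx)_{j_k},
\]
which is exactly $f_T(A^T\xx)$.

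For the second identity, I would simply apply the chain rule. Writing $\yy = A^T\xx$ so that $\partial y_j/\partial x_i = A_{ij}$, a single differentiation gives
\[
\nabla_{\xx} f_T(A^T\xx) = A\,\nabla_{\yy} f_T(\yy)\big|_{\yy = A^T\xx},
\]
and differentiating once more,
\[
\nabla^2_{\xx} f_T(A^T\xx) = A\,\nabla^2_{\yy} f_T(\yy)\big|_{\yy = A^T\xx}\,A^T.
\]
Combining this with the first identity yields $\nabla^2 f_{A\bullet T}(\xx) = A\,\nabla^2 f_T(A^T\xx)\,A^T$.

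There is no real obstacle; both steps are straightforward bookkeeping. The only place to be slightly careful is the index convention in the chain rule, since $\yy = A^T\xx$ means the Jacobian of $\yy$ with respect to $\xx$ is $A^T$, and hence its transpose $A$ appears on the \emph{left} of the gradient/Hessian when pulling derivatives back to $\xx$. Once this is correctly tracked, the whole statement is immediate from the definitions.
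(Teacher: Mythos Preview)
Your proof is correct. The paper does not actually prove this lemma; it is simply stated with a citation to \cite{nonica}, so there is no in-paper argument to compare against. Your direct verification---expanding the multilinear action and regrouping sums for the first identity, then applying the chain rule twice for the Hessian---is exactly the standard (and only natural) way to establish the result, and your tracking of the index convention for $\yy = A^T\xx$ is accurate.
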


Finally, we define the \emph{$i$-th marginalization} $T_{\ldots+\ldots}$ of a tensor $T\in S^r(\RR^d)$ as the sum of $T$ over the $i$-th coordinate. That is $$T_{j_1,\ldots,j_{i-1},+,j_{i+1},\ldots, j_r} = \sum_{{j_i}=1}^d T_{j_1,\ldots,j_{i-1},j_i,j_{i+1},\ldots, j_r}.$$

\subsection{Moment and cumulant tensors}

\par Let $\xx=(x_1,\hdots, x_d)$ be a vector of random variables. We write $\mu_r(\xx)\in S^r(\RR^d)$ to denote the \textit{$r$-th order moment tensor} of $\xx$ as the symmetric tensor with entries $$\mu_r(\xx)_{i_1,\hdots, i_r} = \EE [x_{i_1},\hdots, x_{i_r}].$$
Alternatively, the moments of $\xx$ can be defined using the \textit{moment generating function } $M_\xx(\ttt)=\EE[e^{\ttt^T \xx}].$ The logarithm of the moment generating function is the \textit{cumulant generating function}, denoted as $K_\xx(\ttt)= \log M_\xx(\ttt)=\log \EE [e^{\ttt^T\xx}]$. 
The entries of the moment $\mu_r(\xx)\in S^r(\RR^d)$ and cumulant $\kappa_r(\xx)\in S^r(\RR^d)$ tensors are expressed as follows: 
$$\mu_r(\xx)_{i_1,\hdots, i_r} =\frac{\partial^r}{\partial t_{i_1}\hdots \partial t_{i_r}} M_\xx(\ttt)\bigg|_{t=0},\quad \kappa_r(\xx)_{i_1,\hdots, i_r} = \frac{\partial^r}{\partial t_{i_1}\hdots \partial t_{i_r}} K_\xx(\ttt)\bigg|_{t=0}.$$

\par The moment and cumulant tensors of the random vector $\xx$ are related. In particular, $\kappa_1(\xx)=\mu_1(\xx)=\EE(\xx)$ and $\kappa_2(\xx)= \mu_2(\xx)-\mu_1(\xx)\mu_1(\xx)^T=\var (\xx).$ 
In what follows, we outline a well-established combinatorial relationship between moments and cumulants (see \cite{mom_cum}).
%
Let $\Pi_r$ be the poset of all partitions of $[r]$. We denote by $B$ the blocks of a partition $\pi \in \Pi_r$, and $i_B$ the subvector of $i$ with indices in $B.$
\begin{thm}\label{thm:combo}
    Consider a random vector $\xx\in \RR^d$. For any subset of indices $\{i_1,\hdots i_r\}$ of $\{1,\hdots d\}$ we can relate the moments $\mu_r(\xx)$ and cumulants $\kappa_r(\xx)$ as follows:
    \begin{align*}
        \mu_r(\xx)_{i_1 \hdots i_r}&=\sum_{\pi\in \Pi_r}\prod_{B\in \pi} \kappa_{|B|}(\xx)_{i_B} \text{ and}\\
        \kappa_r(\xx)_{i_1\hdots i_r}&=\sum_{\pi\in \Pi_r}(-1)^{|\pi|-1}(|\pi|-1)! \prod_{B\in \pi} \mu_{|B|}(\xx)_{i_B}.
    \end{align*}
\end{thm}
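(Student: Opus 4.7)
The plan is to derive the two identities from the fundamental relation $M_\xx(\ttt) = \exp(K_\xx(\ttt))$ together with a combinatorial identity on set partitions.

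First I would prove the moment-to-cumulant formula by induction on $r$, differentiating the identity $M_\xx(\ttt) = \exp(K_\xx(\ttt))$. Writing $M=e^K$ and differentiating once gives $\partial_{t_{i_1}} M = (\partial_{t_{i_1}} K) \cdot M$. Applying further derivatives $\partial_{t_{i_2}}, \ldots, \partial_{t_{i_r}}$ via the product rule produces, at each stage, a sum indexed by the way the new derivative either hits one of the existing $K$-factors or creates a new one. Inductively this yields
\[
\partial_{t_{i_1}}\cdots \partial_{t_{i_r}} M_\xx(\ttt) \;=\; M_\xx(\ttt) \sum_{\pi \in \Pi_r} \prod_{B\in \pi} \partial_{t_{i_B}} K_\xx(\ttt),
\]
where $\partial_{t_{i_B}} = \prod_{j\in B}\partial_{t_{i_j}}$. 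Evaluating at $\ttt=\mathbf 0$ uses $M_\xx(\mathbf 0)=1$ and the definitions of $\mu_{|B|}$ and $\kappa_{|B|}$ to give the first identity. This is essentially a multivariate Faà di Bruno formula; the cleanest presentation is the inductive one since it avoids invoking the general statement.

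For the second identity, I would invoke Möbius inversion on the partition lattice $\Pi_r$, ordered by refinement. The first identity says that, for every subset $I \subseteq [r]$, the moment $\mu_{|I|}(\xx)_{i_I}$ is the sum over partitions of $I$ of products of cumulants; equivalently, if we define $f(\pi) = \prod_{B\in\pi}\kappa_{|B|}(\xx)_{i_B}$ and $g(\pi)=\prod_{B\in\pi}\mu_{|B|}(\xx)_{i_B}$, then $g(\pi) = \sum_{\sigma \geq \pi} f(\sigma)$ in the refinement order on partitions of $[r]$. Möbius inversion therefore yields $f(\pi) = \sum_{\sigma \geq \pi} \mu_{\Pi}(\pi,\sigma)\, g(\sigma)$, and specializing to $\pi = \hat 0$ (the finest partition) gives $\kappa_r(\xx)_{i_1\ldots i_r} = \sum_{\sigma \in \Pi_r} \mu_\Pi(\hat 0, \sigma)\, g(\sigma)$. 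Plugging in the classical value $\mu_\Pi(\hat 0, \hat 1) = (-1)^{r-1}(r-1)!$ (and the multiplicativity of $\mu_\Pi$ over the blocks of $\sigma$) produces exactly the stated sign and factorial coefficients.

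The main obstacle is bookkeeping in the inductive derivative computation: one must argue that after $k$ derivatives the sum is indexed exactly by partitions of $[k]$ and that applying the next derivative enacts the two canonical operations (adjoining the new index to an existing block, or opening a new singleton block), which correspond to the Hasse diagram of $\Pi_{k+1}$ above $\Pi_k$. Once this is phrased correctly, both formulas follow, with the second being a standard application of Möbius inversion; the value of $\mu_\Pi$ on the partition lattice is classical and can simply be quoted.
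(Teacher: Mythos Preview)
The paper does not actually prove this theorem; it is stated as a well-known result with a reference to \cite{mom_cum}. Your proposal therefore supplies more than the paper does, and the overall strategy---Fa\`a di Bruno for the first identity, M\"obius inversion on $\Pi_r$ for the second---is the standard and correct one.

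There is, however, a bookkeeping slip in your M\"obius inversion step. With the convention that $\hat 0$ is the finest partition and $\sigma \le \pi$ means $\sigma$ refines $\pi$, applying the first identity blockwise yields
\[
g(\pi)\;=\;\prod_{B\in\pi}\mu_{|B|}(\xx)_{i_B}\;=\;\sum_{\sigma\le\pi} f(\sigma),
\]
not $\sum_{\sigma\ge\pi}$. Inversion then gives $f(\pi)=\sum_{\sigma\le\pi}\mu_{\Pi}(\sigma,\pi)\,g(\sigma)$, and to isolate $\kappa_r$ you must specialize to $\pi=\hat 1$ (the single-block partition), since $f(\hat 1)=\kappa_r(\xx)_{i_1\ldots i_r}$, whereas $f(\hat 0)=\prod_j \kappa_1(\xx)_{i_j}$. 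The coefficient you need is $\mu_{\Pi}(\sigma,\hat 1)=(-1)^{|\sigma|-1}(|\sigma|-1)!$, which follows because the interval $[\sigma,\hat 1]$ is isomorphic to $\Pi_{|\sigma|}$; this is not ``multiplicativity over the blocks of $\sigma$'' (that describes $\mu_\Pi(\hat 0,\sigma)$ instead). Once you swap $\hat 0\leftrightarrow\hat 1$ and reverse the inequality, the argument goes through cleanly.
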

\begin{exmp}
    Let $r=3$, then $\Pi_r=\{123,1|23,2|13,3|12,1|2|3\}$. For simplicity we use the notation $\mu_{i_1 \hdots i_k}:= \mu_k(\xx)_{i_1\hdots i_k}$ and similarly for cumulants. Then,
    \begin{align*}
        \mu_{i_1i_2i_3}&= \kappa_{i_1i_2i_3}+\kappa_{i_1i_2}\kappa_{i_3}+\kappa_{i_1i_3}\kappa_{i_2}+\kappa_{i_2i_3}\kappa_{i_1}+\kappa_{i_1}\kappa_{i_2}\kappa_{i_3},\\
        \kappa_{i_1i_2i_3}&=\mu_{i_1i_2i_3}-\mu_{i_1i_2}\mu_{i_3}-\mu_{i_1i_3}\mu_{i_2}-\mu_{i_2i_3}\mu_{i_1}+2\mu_{i_1}\mu_{i_2}\mu_{i_3}.
    \end{align*}
\end{exmp}

\subsection{Properties of Cumulant Tensors}
Cumulants possess several properties that justify their use despite their added complexity compared to moments.

\par \textit{Multilinearity:} Similar to moments, cumulants exhibit multilinearity. For any matrix $A\in \RR^{d\times d}$, the following equation holds: $$h_r(A\xx)=A\bullet h_r(\xx).$$

\par \textit{Independence:} Given two mutually independent random vectors, $\xx$ and $\yy$, their cumulants exhibit an additive property $\kappa_r(\xx+\yy)=\kappa_r(\xx)+\kappa_r(\yy).$ The independence property is known as the cumulative property of cumulants. We can derive the cumulative property from the cumulant generating function as follows: 
\begin{align}\label{eq:independence}
    K_{\xx+\yy}(\ttt)&=\log \EE \left(e^{\langle \ttt, \xx+\yy\rangle}\right)=\log \EE \left(e^{\sum_i t_i(x_i+y_i)}\right)=\log \EE \left(e^{\sum_i t_ix_i+\sum_it_iy_i}\right)\\
    &=\log \EE \left(e^{\sum_i t_ix_i}e^{\sum_it_iy_i}\right)=\log \EE \left(e^{\sum_i t_ix_i}\right)+\log \EE\left(e^{\sum_it_iy_i}\right)=K_\xx(\ttt)+K_\yy(\ttt).\nonumber
\end{align}
Note that the second to last step follows from the independence of $\xx$ and $\yy$ and logarithmic properties. The independence property extends to multiple independent random vectors.

\par Additionally, independence among different components of $\xx$ results in zero entries in the cumulant tensors as demonstrated in Section \ref{sec:partIndep}.

\par \textit{Gaussianity:} The Gaussian distribution has a special property related to cumulants; higher-order cumulants of Gaussian distributions are all zero, as established by a version of the Marcinkiewicz classical result (see Lemma \ref{lem:Marc}).
\begin{prop}
    If $\xx \sim \mathcal{N}_d(\mu, \Sigma)$, then $\kappa_1(\xx)=\mu$, $\kappa_2(\xx)=\Sigma$, and $\kappa_r(\xx)=0$ for $r\ge 3.$ Moreover, the Gaussian distribution is the only probability distribution such that there exists an $r_0$ with the property that $\kappa_r(\xx)=0$ for all $r\ge r_0.$
\end{prop}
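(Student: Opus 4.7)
The proposition has two parts: a direct computation for the first, and a converse that reduces to the univariate Marcinkiewicz Lemma already proved (Lemma \ref{lem:Marc}).

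\textbf{Forward direction.} I would start by writing down the moment generating function of $\xx \sim \mathcal{N}_d(\mu,\Sigma)$, which by a standard Gaussian integral computation equals
\[
M_\xx(\ttt) = \exp\!\left(\ttt^T\mu + \tfrac{1}{2}\ttt^T\Sigma\ttt\right).
\]
Taking the logarithm gives $K_\xx(\ttt) = \ttt^T\mu + \tfrac{1}{2}\ttt^T\Sigma\ttt$, a quadratic polynomial in $\ttt$. Differentiating once and evaluating at $\ttt=0$ recovers $\mu$; differentiating twice recovers $\Sigma$; any third or higher mixed partial derivative vanishes identically, since the function is quadratic. This proves $\kappa_1(\xx)=\mu$, $\kappa_2(\xx)=\Sigma$, and $\kappa_r(\xx)=0$ for all $r\ge 3$.

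\textbf{Converse.} Suppose $\kappa_r(\xx) = 0$ for all $r \ge r_0$. Writing the Taylor expansion of $K_\xx$ about the origin via the multi-index formula
\[
K_\xx(\ttt) = \sum_{r \ge 1} \frac{1}{r!} \sum_{i_1,\ldots,i_r=1}^{d} \kappa_r(\xx)_{i_1\ldots i_r}\, t_{i_1}\cdots t_{i_r},
\]
the vanishing hypothesis forces $K_\xx(\ttt)$ to be a polynomial in $\ttt$ of degree at most $r_0 - 1$. Now I reduce to the univariate case: for an arbitrary vector $a \in \RR^d$, set $z = a^T\xx$, whose cumulant generating function satisfies $K_z(s) = \log \EE(e^{sa^T\xx}) = K_\xx(sa)$. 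This is a univariate polynomial in $s$ of degree at most $r_0 - 1$. Consequently the characteristic function of $z$ has the form $\phi_z(t) = e^{K_z(\ii t)} = e^{P(t)}$ with $P$ a polynomial, so by Lemma \ref{lem:Marc} we conclude that $z = a^T\xx$ is a (possibly degenerate) Gaussian random variable.

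\textbf{Conclusion via Cram\'er--Wold.} Since every linear combination $a^T\xx$ is univariate Gaussian, the Cram\'er--Wold device asserts that the joint distribution of $\xx$ is multivariate Gaussian. Combined with the first part, this gives the uniqueness statement. The main obstacle I anticipate is the multivariate-to-univariate reduction: Lemma \ref{lem:Marc} as proved is one-dimensional, and one must justify passing from the polynomiality of $K_\xx(\ttt)$ in $d$ variables to that of $K_z(s)$ in one variable (straightforward by restriction to a line) and then invoking Cram\'er--Wold, which is classical and can simply be cited.
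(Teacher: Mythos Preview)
The paper does not actually supply a proof of this proposition: it is stated as a classical fact, with the surrounding text pointing to Lemma~\ref{lem:Marc} (Marcinkiewicz) as the relevant ingredient. Your argument is precisely the standard way to flesh this out and matches the route the paper implicitly has in mind: compute $K_\xx$ explicitly for the forward direction, and for the converse restrict to lines $s\mapsto sa$ to reduce to the univariate Marcinkiewicz lemma, then globalize via Cram\'er--Wold. This is correct.

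One small technical point worth tightening: when you pass from ``$\kappa_r(\xx)=0$ for $r\ge r_0$'' to ``$K_\xx(\ttt)$ is a polynomial,'' you are implicitly using that $K_\xx$ equals its Taylor series in a neighborhood of the origin. In the paper's setup cumulants are defined through the moment generating function, so $M_\xx$ exists near $0$ and is analytic there, which justifies the identification; it is worth saying this in one line. Likewise, when you write $\phi_z(t)=e^{K_z(\ii t)}$, note that $K_z(\ii t)$ is a priori a polynomial in $t$ with complex coefficients, but Lemma~\ref{lem:Marc} as stated handles exactly $\phi(t)=e^{P(t)}$ for a polynomial $P$, so the invocation is legitimate. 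With those two remarks made explicit, the proof is complete and in the spirit of the paper's reference.
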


\subsection{Cumulant tensors and independence}\label{sec:cumIndep}

{In this section we aim to demonstrate how different independence conditions on the components on $\xx\in\RR^d$ translate into zero entries of the cumulant tensors $\kappa_r(\xx)$ of $\xx$. }

\subsubsection{Partitioned independence}\label{sec:partIndep} {Motivated by ISA, we consider the situation when a random vector 
consists of a partition of independent subvectors 
$\xx = (\xx_{1}, \ldots, \xx_{m})\in\RR^d$ where each subvector is of the form $\xx_{i} = (x_{i_k})_{i_k\in I_m}$ for some partition $I_1,\ldots, I_m$ of the indices $[d]$.
For simplicity, we state the following proposition for a partition with two elements $I$, $J$. However, it directly generalizes when considering $m>2$ partitions of $\xx$ into independent components.}


\begin{prop} \label{prop:partitioning}
    Let $I,J$ be a partition of $\{i_1,\hdots, i_r\}$. Then $x_i$ and $x_j$ are independent for any $i\in I$ and $j\in J$ if and only if $\kappa_r(\xx)_{i_1,\hdots, i_r}=0$.
\end{prop}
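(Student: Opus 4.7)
The plan is to prove both implications using the cumulant generating function: the forward direction via the cumulative property of equation \eqref{eq:independence}, and the converse via the moment--cumulant inversion in Theorem \ref{thm:combo}. Throughout I read the hypothesis ``$x_i$ and $x_j$ independent for any $i\in I$, $j\in J$'' as the joint independence $\xx_I \indep \xx_J$ of the subvectors $\xx_I=(x_i)_{i\in I}$ and $\xx_J=(x_j)_{j\in J}$, consistent with the PICA setup introduced just above the statement.

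For the forward direction, the cumulative property gives $K_\xx(\ttt) = K_{\xx_I}(\ttt_I) + K_{\xx_J}(\ttt_J)$, where $\ttt_I,\ttt_J$ are the subvectors of $\ttt$ indexed by $I,J$. Since $\{i_1,\ldots,i_r\}$ meets both $I$ and $J$ nontrivially, the mixed derivative $\partial^r/\partial t_{i_1}\cdots\partial t_{i_r}$ annihilates each summand: $K_{\xx_I}(\ttt_I)$ does not depend on $t_j$ for $j\in J$, and symmetrically for $K_{\xx_J}(\ttt_J)$. Evaluating at $\ttt=0$ yields $\kappa_r(\xx)_{i_1\ldots i_r}=0$. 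For the converse, I read the vanishing hypothesis as the assertion that every cumulant whose index set is nontrivially split between $I$ and $J$ vanishes, for every order. Applying Theorem \ref{thm:combo}, only those partitions $\pi$ whose blocks each lie entirely within $I$ or entirely within $J$ contribute nonzero terms to $\sum_{\pi}\prod_{B\in\pi}\kappa_{|B|}(\xx)_{i_B}$, so the sum factorizes into an $I$-part times a $J$-part, giving a factorization of every joint moment of $\xx$ into the corresponding moments of $\xx_I$ and $\xx_J$. Factorization of all joint moments identifies the joint moment generating function with the product of the marginal ones, which under the usual regularity forces $\xx_I \indep \xx_J$.

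The main obstacle is the reading of the ``iff'': vanishing of a single cumulant cannot force joint independence, so the equivalence must be interpreted as quantifying the vanishing condition over all cumulants whose indices are split between $I$ and $J$. A secondary subtlety is the final step of the converse, passing from factorization of all joint moments to independence, which leans on the standard fact that the joint distribution is determined by its moment generating function (or equivalently its characteristic function) under appropriate regularity.
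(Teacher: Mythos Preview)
Your proposal is correct. The forward direction is identical to the paper's argument: both use the cumulative property to split $K_\xx(\ttt)=K_{\xx_I}(\ttt_I)+K_{\xx_J}(\ttt_J)$ and observe that a mixed partial derivative kills each summand.

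For the converse you take a genuinely different route. The paper works directly with the Taylor expansion of the cumulant generating function: since every cross cumulant vanishes, the Taylor series of $K_\xx$ at the origin separates as $K_{\xx_I}(\ttt_I)+K_{\xx_J}(\ttt_J)$, hence $M_\xx=M_{\xx_I}M_{\xx_J}$, and then Theorem~\ref{thm:uniquenessMGF} forces independence. You instead invoke the moment--cumulant inversion of Theorem~\ref{thm:combo}: any partition $\pi$ with a block straddling $I$ and $J$ contributes zero, the surviving partitions are exactly those refining $\{I,J\}$, and the double sum factorizes into a product of the two marginal moment expansions. Both arguments land at the same factorization $M_\xx=M_{\xx_I}M_{\xx_J}$ and then appeal to the same determinacy fact; the paper's generating-function argument is slightly shorter, while yours makes the combinatorial mechanism explicit and does not require recognizing the Taylor series as a sum of two CGFs. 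Your reading of the ``iff'' (vanishing of \emph{all} cross cumulants, not just a single one) matches the paper's usage, and your final step is exactly the content of Theorem~\ref{thm:uniquenessMGF}.
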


The proof of this proposition relies on the following theorem.

\begin{thm}[\cite{Jacod_Protter_2003}]\label{thm:uniquenessMGF}
    Assume $M_\xx(\ttt) = M_\yy(\ttt)$ for $t\in[-r,r]^n$ for some $r>0$, then the random vectors $\xx$ and $\yy$ have the same distribution.
\end{thm}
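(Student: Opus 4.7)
The plan is to route both directions through the cumulant generating function $K_\xx(\ttt) = \log M_\xx(\ttt)$, since cumulants are exactly the mixed partial derivatives of $K_\xx$ at the origin. The partition structure of $I$ and $J$ translates into a clean block structure on $\ttt = (\ttt_I, \ttt_J)$, where $\ttt_I = (t_i)_{i \in I}$ and $\ttt_J = (t_j)_{j \in J}$, and both implications reduce to manipulations of $K_\xx$ relative to that block structure.

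For the forward direction, I will invoke the cumulative property of the CGF derived in equation (\ref{eq:independence}). Under the hypothesis that $(x_i)_{i \in I}$ and $(x_j)_{j \in J}$ form independent subvectors, we have $K_\xx(\ttt) = K_{\xx_I}(\ttt_I) + K_{\xx_J}(\ttt_J)$. Differentiating,
$$\kappa_r(\xx)_{i_1,\ldots,i_r} = \left.\frac{\partial^r}{\partial t_{i_1} \cdots \partial t_{i_r}}\left(K_{\xx_I}(\ttt_I) + K_{\xx_J}(\ttt_J)\right)\right|_{\ttt=0}.$$
Because the partition $(I,J)$ of $\{i_1,\ldots,i_r\}$ is nontrivial, some derivative is taken with respect to a variable in $\ttt_J$ (annihilating $K_{\xx_I}$, which does not depend on it) and another with respect to a variable in $\ttt_I$ (annihilating $K_{\xx_J}$). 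Hence both summands contribute zero and $\kappa_r(\xx)_{i_1,\ldots,i_r}=0$.

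For the converse, I interpret the hypothesis universally: the vanishing cumulant condition must hold for every index tuple whose entries straddle $I$ and $J$, since a single vanishing cumulant (e.g. $r = 2$) cannot by itself characterize independence. From the Taylor expansion
$$K_\xx(\ttt) = \sum_{r \geq 1} \frac{1}{r!} \sum_{i_1,\ldots,i_r = 1}^{d} \kappa_r(\xx)_{i_1,\ldots,i_r} \, t_{i_1}\cdots t_{i_r},$$
only terms with all indices in $I$ or all indices in $J$ can be nonzero, so $K_\xx(\ttt) = g(\ttt_I) + h(\ttt_J)$ for suitable analytic $g$ and $h$. Exponentiating gives $M_\xx(\ttt) = e^{g(\ttt_I)} e^{h(\ttt_J)}$, and matching the marginals via $M_{\xx_I}(\ttt_I) = M_\xx((\ttt_I, 0))$ and $M_{\xx_J}(\ttt_J) = M_\xx((0, \ttt_J))$ (using $M_\xx(0) = 1$ to normalize) produces the MGF factorization $M_\xx(\ttt) = M_{\xx_I}(\ttt_I)\, M_{\xx_J}(\ttt_J)$. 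Comparing $\xx$ against a product-distribution reference via Theorem \ref{thm:uniquenessMGF} then delivers $\xx_I \indep \xx_J$.

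The principal obstacle is conceptual rather than computational: the biconditional as literally stated is too strong for a single cumulant entry (uncorrelatedness is not independence), so the converse direction must be interpreted as the simultaneous vanishing of every cumulant straddling the partition. A secondary technical caveat is that the Taylor manipulation presupposes convergence of the MGF in a neighborhood of the origin, a regularity assumption built into the cumulant-based framework the paper adopts.
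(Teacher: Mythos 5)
Your proposal does not address the statement you were asked to prove. Theorem \ref{thm:uniquenessMGF} asserts that if two random vectors have equal moment generating functions on a box $[-r,r]^n$ around the origin, then they have the same distribution. What you have written is instead a proof of Proposition \ref{prop:partitioning} (the equivalence between partitioned independence of $\xx_I$ and $\xx_J$ and the vanishing of the cross cumulants), and indeed your argument explicitly \emph{invokes} Theorem \ref{thm:uniquenessMGF} in its final step to pass from the factorization $M_\xx(\ttt) = M_{\xx_I}(\ttt_I)M_{\xx_J}(\ttt_J)$ to independence. Used as a proof of Theorem \ref{thm:uniquenessMGF} itself, that step is circular; used as a proof of Proposition \ref{prop:partitioning}, it is essentially the paper's own argument --- but that is not the statement at hand.

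A proof of the actual statement requires a different kind of argument, one about uniqueness of distributions rather than about cumulant sparsity. The standard route (and the one underlying the citation to \cite{Jacod_Protter_2003}) is: finiteness of $M_\xx$ on a neighborhood of the origin forces all exponential moments $\EE(e^{a|x_i|})$ to be finite for small $a>0$, so by the same mechanism as Lemma \ref{lem:charAnalytic} the characteristic function $\phi_\xx(z)$ extends analytically to a strip around $\RR^n$ in $\CC^n$; equality of $M_\xx$ and $M_\yy$ on the real box $[-r,r]^n$ forces equality of the analytic extensions by the identity theorem, hence $\phi_\xx = \phi_\yy$ on all of $\RR^n$, and the uniqueness theorem for characteristic functions concludes. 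None of this machinery appears in your proposal. Your closing caveat about ``convergence of the MGF in a neighborhood of the origin'' gestures at the relevant hypothesis, but the analytic-continuation argument that turns that hypothesis into a uniqueness statement is precisely the content of the theorem and is entirely missing.
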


\begin{proof}[Proof of Proposition \ref{prop:partitioning}]
    For simplicity, we denote by $\xx_I$ (and similarly for $\xx_J$) the random subvector $(x_i)_{i\in I}.$ Consider the cumulant generating function, $K_\xx(\ttt)$. Applying similar steps as in Equation \eqref{eq:independence} we result in
        \begin{align*}
    K_{\xx}(\ttt)&=\log \EE \left(e^{\langle \ttt, \xx\rangle}\right)=\log \EE \left(e^{\sum_i t_ix_i}\right)=\log \EE \left(e^{\sum_{i\in I} t_ix_i+\sum_{j\in J}t_jx_j}\right)\\
    &=\log \EE \left(e^{\sum_{i\in I} t_ix_i}e^{\sum_{j\in J}t_jx_j}\right)=\log \EE \left(e^{\sum_{i\in I} t_ix_i}\right)+\log \EE\left(e^{\sum_{j\in J}t_jx_j}\right)=K_{\xx_I}(\ttt)+K_{\xx_J}(\ttt).
\end{align*}
The above holds since $\xx_I$ and $\xx_J$ are independent. We can also infer this from the cumulative property by considering two independent random vectors $(\xx_I, 0,\hdots, 0)\in \RR^d$ and $(0,\hdots, 0, \xx_J)\in \RR^d$.
From here, we may conclude that $$\kappa_r(\xx)_{i_1,\hdots, i_r}= \frac{\partial^r}{\partial t_{i_1}\hdots \partial t_{i_r}}K_\xx(\ttt)\bigg|_{t=0}= \frac{\partial^r}{\partial \ttt_I \partial \ttt_J}\left(K_{\xx_I}(\ttt_I)+K_{\xx_J}(\ttt_J)\right)\bigg|_{t=0}=0$$
where $\partial \ttt_I= \partial t_{j_1} \hdots \partial t_{j_s}$ for indices $j_i\in I$ (and similarly for $\partial \ttt_J$).

To prove the reverse, consider the Taylor expansion of the cumulant generating function $K_\xx(\ttt)$ at $\ttt=0$, that is,
\begin{align*}
    K_\xx(\ttt) = &K_\xx(0) + \sum_{i\in[d]}\kappa_1(\xx)_it_i + \sum_{i,j\in[d]}\frac{\kappa_2(\xx)_{ij}t_it_j}{2!} + \sum_{i,j,k\in[d]}\frac{\kappa_3(\xx)_{ijk}t_it_jt_k}{3!}+
    \cdots \\
    = &\sum_{i\in I}\kappa_1(\xx)_it_i + \sum_{j\in J}\kappa_1(\xx)_jt_j + \sum_{i_1,i_2\in I}\frac{\kappa_2(\xx)_{i_1i_2}t_{i_1}t_{i_2}}{2!} + \sum_{j_1,j_2\in J}\frac{\kappa_2(\xx)_{j_1j_2}t_{j_1}t_{j_2}}{2!} + \\
    &+ \sum_{i_1,i_2,i_3\in I}\frac{\kappa_3(\xx)_{i_1i_2i_3}t_{i_1}t_{i_2}t_{i_3}}{3!} + \sum_{j_1,j_2,j_3\in J}\frac{\kappa_3(\xx)_{j_1j_2j_3}t_{j_1}t_{j_2}t_{j_3}}{3!} + \cdots \\
    &=K_{\xx_I}(\ttt_I) + K_{\xx_J}(\ttt_J),
\end{align*}
where the second equality follows since all cross cumulants $\gencum=0$ for indices in $I$ and $J$ by definition.
This implies that $M_\xx(\ttt) = M_{\xx_I}(\ttt_I)M_{\xx_J}(\ttt_J)$, and therefore $M_\xx(\ttt)$ is the moment generating function of a vector whose components $\xx_I$ and $\xx_J$ are independent. By Theorem \ref{thm:uniquenessMGF} we can conclude that the random variables $\xx_I$ and $\xx_J$ are independent.
\end{proof}

{
\begin{cor}
    Let $\xx = (\xx_{1}, \ldots, \xx_{m}) \in\RR^d$ be a random vector with $\xx_{i} = (x_{i_k})_{i_k\in I_m}$ for some partition $I_1,\ldots, I_m$ of the indices $[d]$. The vectors $\xx_{j}$, $\xx_{k}$ are independent if and only if $\kappa_r(\xx)_{i_1,\hdots, i_r}=0$ for all set of indices $\{i_1,\hdots, i_r\}$ such that there exist $i_j,i_k\in \{i_1,\hdots, i_r\}$ with $i_j\in I_j$ and $i_k\in I_k$. \label{cor:independentpartition}
\end{cor}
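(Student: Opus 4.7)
The plan is to reduce the $m$-block case to the two-block case already handled by Proposition \ref{prop:partitioning}, by passing to the appropriate marginal random vector.

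First, I would introduce the marginal vector $\yy = (\xx_j, \xx_k)$ with index set $I_j \cup I_k$. The key observation is that the cumulant generating function of $\yy$ is obtained from that of $\xx$ by setting to zero all coordinates $t_\ell$ with $\ell \notin I_j \cup I_k$, because the moment generating function satisfies $M_\yy(\ttt_j, \ttt_k) = \EE[\exp(\sum_{i\in I_j} t_i x_i + \sum_{i\in I_k} t_i x_i)] = M_\xx(\ttt)|_{t_\ell = 0 \text{ for }\ell \notin I_j\cup I_k}$. Taking $\log$ and then differentiating only with respect to $t_{i_1}, \ldots, t_{i_r}$ for $i_1, \ldots, i_r \in I_j \cup I_k$ yields
\[
\kappa_r(\yy)_{i_1,\ldots,i_r} \;=\; \kappa_r(\xx)_{i_1,\ldots,i_r}
\]
for all $i_1, \ldots, i_r \in I_j \cup I_k$, since dropping the other variables of $\ttt$ before or after differentiating gives the same value at $\ttt=0$.

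Next, I would apply Proposition \ref{prop:partitioning} to the random vector $\yy$, with the partition of its index set given by $\{I_j, I_k\}$. That proposition tells us that $\xx_j$ and $\xx_k$ (the two subvectors making up $\yy$) are independent if and only if every cumulant $\kappa_r(\yy)_{i_1,\ldots,i_r}$ with at least one index in $I_j$ and at least one in $I_k$ vanishes. Combining this with the marginal identity from the previous step converts the vanishing condition on $\kappa_r(\yy)$ into the stated vanishing condition on $\kappa_r(\xx)$, which gives the equivalence in the corollary.

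I do not foresee a major obstacle; the only point requiring some care is the justification that marginal cumulants really are the restriction of the ambient cumulant tensor, which follows cleanly from the $M_\yy(\ttt_j,\ttt_k) = M_\xx(\ttt)|_{t_\ell=0}$ identity together with the commuting of partial derivatives in the unmarginalized coordinates and evaluation at zero in the marginalized ones. Once this is established, the corollary is a direct invocation of Proposition \ref{prop:partitioning}, and no extension of the argument beyond the two-block setting is needed.
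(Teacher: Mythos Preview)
Your marginalization argument is clean and correct for cross-cumulants whose indices lie entirely in $I_j\cup I_k$: restricting $M_\xx$ to $t_\ell=0$ for $\ell\notin I_j\cup I_k$ does give $M_\yy$, and this identifies $\kappa_r(\yy)_{i_1,\ldots,i_r}$ with $\kappa_r(\xx)_{i_1,\ldots,i_r}$ for such index tuples, after which Proposition~\ref{prop:partitioning} applies directly. The paper itself gives no separate proof of the corollary, treating it as the announced ``direct generalization'' of Proposition~\ref{prop:partitioning}; your reduction via marginals is the natural way to make that generalization precise, so in spirit the approaches agree.

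There is, however, a gap between what you prove and what the corollary literally asserts. As written, the condition ranges over \emph{all} index tuples $(i_1,\ldots,i_r)$ in $[d]$ that contain at least one index from $I_j$ and one from $I_k$; nothing forbids the remaining indices from lying in some third block $I_\ell$. Your marginal vector $\yy=(\xx_j,\xx_k)$ carries no information about such tuples, so your argument says nothing about them. In fact the forward implication fails in that generality: take $m=3$ with one-dimensional blocks, $x_1\indep x_2$ mean zero, and $x_3=x_1x_2$; then $\kappa_3(\xx)_{123}=\EE[x_1^2]\EE[x_2^2]\neq 0$ even though $\xx_1\indep\xx_2$. So either the corollary should be read with the implicit restriction $i_1,\ldots,i_r\in I_j\cup I_k$ (in which case your proof is complete), or one needs the additional standing hypothesis that the $\xx_1,\ldots,\xx_m$ are mutually independent (as in the PICA setup the paper has in mind), in which case the extra vanishing follows from a separate application of Proposition~\ref{prop:partitioning} to the partition $\{I_\ell,\,[d]\setminus I_\ell\}$. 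You should state explicitly which reading you adopt and, if the second, supply that extra step.
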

}

{If partitions contain a single element, then we have the following proposition.}
\begin{cor}
    \label{prop:diagonal}
    The components of $\xx$ are mutually independent if and only if $\kappa_r(\xx)$ is a diagonal tensor for all $r\ge 2.$
\end{cor}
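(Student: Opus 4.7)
The plan is to derive this corollary as the singleton-partition specialization of Proposition \ref{prop:partitioning} (equivalently, of Corollary \ref{cor:independentpartition}), mirroring almost exactly its proof but with the finest partition of $[d]$ into $d$ blocks $I_k = \{k\}$ instead of two blocks.

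For the forward implication, I would assume that $x_1,\ldots,x_d$ are mutually independent. Given any multi-index $(i_1,\ldots,i_r)$ that is not constant, there exist two distinct values appearing in it, so one can partition $\{i_1,\ldots,i_r\}$ into two nonempty subsets $I,J$ consisting of positions carrying different index values; mutual independence implies the independence of $\xx_I$ and $\xx_J$, and Proposition \ref{prop:partitioning} then gives $\kappa_r(\xx)_{i_1\ldots i_r}=0$. This is precisely the statement that $\kappa_r(\xx)$ is diagonal for every $r\geq 2$.

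For the reverse implication, I would mimic the second half of the proof of Proposition \ref{prop:partitioning}. Assuming $\kappa_r(\xx)$ is diagonal for every $r \geq 2$, the Taylor expansion of $K_\xx(\ttt)$ at $\ttt=0$ collapses, since the only surviving cumulant entries are those with all indices equal:
\begin{align*}
K_\xx(\ttt) \;=\; K_\xx(0) + \sum_{i=1}^d \kappa_1(\xx)_i\, t_i + \sum_{r\geq 2}\frac{1}{r!}\sum_{i=1}^d \kappa_r(\xx)_{i\ldots i}\, t_i^r \;=\; \sum_{i=1}^d K_{x_i}(t_i).
\end{align*}
Exponentiating yields $M_\xx(\ttt) = \prod_{i=1}^d M_{x_i}(t_i)$, which is the moment generating function of a vector with mutually independent components; Theorem \ref{thm:uniquenessMGF} then identifies $\xx$ with this distribution, giving mutual independence.

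There is no real obstacle here: the only subtle point is that Corollary \ref{cor:independentpartition} as stated gives pairwise independence between two subvectors, so one should either invoke the argument with the full singleton partition directly (as above, reading off the factorization of $K_\xx$ into $d$ summands) rather than deducing mutual independence from pairwise independence, which is in general insufficient. Handling this via the Taylor expansion route avoids that pitfall cleanly.
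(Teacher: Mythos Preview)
Your proposal is correct and matches the paper's intent: the paper simply asserts that the corollary ``follows directly from Proposition \ref{prop:partitioning}'' without further detail, and your argument spells out exactly how. Your care in the reverse direction---factoring $K_\xx(\ttt)$ into $d$ univariate summands rather than appealing to the two-block statement and then trying to upgrade pairwise to mutual independence---is a genuine refinement over the paper's terse treatment, and is the right way to close the gap.
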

This result follows directly from Proposition \ref{prop:partitioning} and emphasizes the importance of investigating the independence assumption in ICA by looking at what happens when we include non-zero elements in relevant higher-order cumulant tensors.

\subsubsection{Mean Independence}

{
We also consider the case where variables 
are mean independent.} 
{Two variables $x_i$ and $x_j$ are \emph{mean independent} if $$\EE (x_i\mid x_j)=\EE (x_i).$$}
\begin{prop}[\cite{nonica}]\label{prop:meanIndepTensor}
    If $x_i$ is mean independent of $x_j$, then $\kappa_{ij\hdots jj}=0$ for all order $r\ge 2$ cumulant tensors.
    \begin{proof}
        We can assume $\EE x_i =0$, so then for all $\ell \ge 0$, $$\EE (x_i x_j^\ell) = \EE(x_j^\ell \EE(x_i\mid x_j))=\EE(x_i)\EE(x_j^\ell) =0.$$
        Now using the combinatorial relationship between cumulants and moments we conclude the desired result.
    \end{proof}
\end{prop}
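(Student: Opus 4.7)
The plan is to reduce the claim to the combinatorial moment--cumulant formula from Theorem \ref{thm:combo} and to show that, in every partition contributing to the sum, at least one factor vanishes.

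First I would recenter. Cumulants of order $r\ge 2$ are invariant under shifts of individual coordinates (this follows, for instance, from the fact that $K_{\xx+c}(\ttt)=K_\xx(\ttt)+c^\rT\ttt$, so only the mean $\kappa_1$ changes). Hence we may assume $\EE(x_i)=0$ without loss of generality. The mean-independence hypothesis $\EE(x_i\mid x_j)=\EE(x_i)$ then upgrades, via the tower property, to
\begin{equation*}
    \EE(x_i\,x_j^{\ell})=\EE\bigl(x_j^{\ell}\,\EE(x_i\mid x_j)\bigr)=\EE(x_i)\,\EE(x_j^{\ell})=0\qquad\text{for every }\ell\ge 0.
\end{equation*}
So every mixed moment of the form $\mu_{k}(\xx)_{i j\cdots j}$ with a single $i$-index and any number of $j$-indices vanishes.

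Next I would apply the moment-to-cumulant formula of Theorem \ref{thm:combo} to the index tuple $(i_1,\ldots,i_r)=(i,j,\ldots,j)$, writing
\begin{equation*}
    \kappa_r(\xx)_{ij\cdots j}=\sum_{\pi\in\Pi_r}(-1)^{|\pi|-1}(|\pi|-1)!\prod_{B\in\pi}\mu_{|B|}(\xx)_{i_B}.
\end{equation*}
In each partition $\pi$ there is exactly one block $B^\star$ that contains the position carrying the index $i$; all other positions in $B^\star$ carry the index $j$. Hence the factor contributed by $B^\star$ is $\mu_{|B^\star|}(\xx)_{ij\cdots j}=\EE(x_i\,x_j^{\,|B^\star|-1})$, which equals $0$ by the previous paragraph (covering both the case $|B^\star|=1$, where it is $\EE(x_i)=0$, and $|B^\star|\ge 2$, where it is $\EE(x_i x_j^{|B^\star|-1})=0$). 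Every summand therefore vanishes, giving $\kappa_r(\xx)_{ij\cdots j}=0$.

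The argument has no real obstacle; the only delicate point is making sure the recentering is legitimate, which comes down to checking that translating $x_i$ shifts only $\kappa_1$ and leaves higher cumulants unchanged. Once that is in hand, the combinatorial formula does all the work because the index $i$ must appear in some block of every partition of $[r]$, and the mean-independence assumption was tailor-made to kill exactly such one-sided mixed moments.
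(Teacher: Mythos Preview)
Your proposal is correct and follows exactly the same approach as the paper's proof: recenter so that $\EE(x_i)=0$, use the tower property to kill all mixed moments $\EE(x_i x_j^{\ell})$, and then invoke the combinatorial moment--cumulant formula of Theorem~\ref{thm:combo}. The only difference is that you spell out explicitly why every partition contributes a vanishing factor, whereas the paper leaves that step implicit.
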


{
This result can be extended to random vectors. 
We say that the components of $\xx\in\RR^d$ are mean independent if $x_i$ is mean independent of $x_j\in\xx_J$ for any $i\in [d]$ and any subset $J$ of $[d]\setminus i$. Immediately, we have the following generalization.

\begin{prop}
    If $\xx$ is mean independent 
    then $\kappa_{i j_2 \hdots j_r}=0$ for any $i\in [d]$ and any collection $j_2,\hdots , j_r$ of elements in $[d]\setminus i$. \label{prop:meanindsparsity}
\end{prop}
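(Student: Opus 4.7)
The plan is to mimic the proof of Proposition \ref{prop:meanIndepTensor}, replacing the single variable $x_j$ by a tuple, and then apply the combinatorial conversion between moments and cumulants from Theorem \ref{thm:combo}.

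First I would reduce to the centered case: by the centering argument already discussed in Section \ref{sec:ICA} (and as done in the proof of Proposition \ref{prop:meanIndepTensor}), we may assume $\EE(x_i)=0$. The core computation is then to show that every ``mixed'' moment involving $x_i$ exactly once vanishes. Fix $i\in[d]$ and indices $j_2,\ldots,j_k\in[d]\setminus i$ (possibly with repetitions). Using the tower property and the mean-independence hypothesis $\EE(x_i\mid x_{j_2},\ldots,x_{j_k})=\EE(x_i)=0$, I would compute
\begin{equation*}
\EE(x_i x_{j_2}\cdots x_{j_k})
=\EE\!\left(x_{j_2}\cdots x_{j_k}\,\EE(x_i\mid x_{j_2},\ldots,x_{j_k})\right)=0.
\end{equation*}
In other words, $\mu_k(\xx)_{i,j_2,\ldots,j_k}=0$ whenever the index $i$ appears exactly once and the remaining indices belong to $[d]\setminus i$ (the case $k=1$ is simply $\mu_1(\xx)_i=\EE(x_i)=0$).

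Next I would plug this vanishing into the moment-to-cumulant formula of Theorem \ref{thm:combo}:
\begin{equation*}
\kappa_r(\xx)_{i,j_2,\ldots,j_r}
=\sum_{\pi\in\Pi_r}(-1)^{|\pi|-1}(|\pi|-1)!\prod_{B\in\pi}\mu_{|B|}(\xx)_{i_B}.
\end{equation*}
For every partition $\pi\in\Pi_r$, the index $i$ lies in exactly one block $B^\star\in\pi$, and the other elements of $B^\star$ are taken from $\{j_2,\ldots,j_r\}\subseteq[d]\setminus i$. Thus the factor $\mu_{|B^\star|}(\xx)_{i_{B^\star}}$ has exactly one occurrence of $i$ together with indices different from $i$, so by the previous step it vanishes. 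Since every summand contains a zero factor, the entire sum collapses to $0$, giving $\kappa_r(\xx)_{i,j_2,\ldots,j_r}=0$.

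I do not anticipate a real obstacle here; the statement is essentially the multi-index generalisation of Proposition \ref{prop:meanIndepTensor}, and the only point to be careful about is verifying that the block containing $i$ in an arbitrary partition still matches the hypothesis of the mean-independence computation (i.e.\ no other copy of $i$ appears in that block), which is guaranteed by the assumption that $i$ occurs exactly once among the indices $i,j_2,\ldots,j_r$.
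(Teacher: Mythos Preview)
Your proposal is correct and matches the paper's approach: the paper states the result as an immediate generalisation of Proposition~\ref{prop:meanIndepTensor} without spelling out a proof, and your argument is exactly the natural way to fill in those details---replace $x_j^\ell$ by a product $x_{j_2}\cdots x_{j_k}$, use the tower property together with the vector form of mean independence, and then invoke Theorem~\ref{thm:combo}.
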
}

\subsubsection{Independence induced by a graph}
In this section, we explore sparsity patterns on the cumulant tensors of a random vector $\xx$, if the independence on the entries of $\xx$ is given by the edges of a graph $G$. We will also connect this to the case where the vector $\xx$ has a partition of subsets of dependent random variables, but independent from each other.

The following definition illustrates the independence of $\xx$ given by a graph $G$.
\begin{defn}\label{def:indepGraph}
    Let $G=(V,E)$ be an undirected graph with vertices $V=\{1,\hdots, d\}.$ We will say that independence of a random vector $\xx\in \RR^d$ is induced by $G$ if each pair of variables $x_i$ and $x_j$ are independent if and only if there is no edge in $E$ connecting $i$ and $j.$
\end{defn}
The next result follows directly. 
\begin{prop}\label{prop:graph}
    Let $\xx \in \RR^d$ be a random vector with independence induced by a graph $G=(V,E)$. Then $\kappa_r(\xx)_{i_1,\hdots, i_r}=0$ if and only if the induced subgraph $G'=(V',E')$ formed by the subset of vertices $V'=\{i_1, \hdots, i_r\}$ and the corresponding subset of edges $E'=\{e :(u,v) \in E \mid u,v \in V'\}$ is a disconnected graph. 
    \begin{proof}

    {Since $\xx$ has independence induced by a graph $G$, a subvector $\xx'=(x_{i_1}, \ldots, x_{i_r})$ of $\xx$ will be partitioned into independent subvectors $\xx_I$ and $\xx_J$ if and only if the vertices in $I$ and $J$ are disconnected in $G'$.}
    Immediately the result follows from Corollary \ref{cor:independentpartition}. 
    \end{proof}
\end{prop}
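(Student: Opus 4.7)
The plan is to reduce Proposition \ref{prop:graph} directly to Corollary \ref{cor:independentpartition} by translating the disconnectedness of the induced subgraph $G'$ into the existence of a partition of the index set $V'=\{i_1,\ldots,i_r\}$ into independent subvectors.

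First, I would unpack Definition \ref{def:indepGraph} at the subvector level: if a vertex set $U \subseteq V$ splits as $U = I \sqcup J$ with no edge of $G$ joining $I$ to $J$, then the subvectors $\xx_I$ and $\xx_J$ are jointly independent. The definition gives pairwise independence across the $I$--$J$ cut; together with the graphical model set-up of Section \ref{sec:partIndep}, this extends to independence of the two blocks, since no vertex of $I$ is linked to any vertex of $J$.

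For the $(\Leftarrow)$ direction, suppose $G'$ is disconnected and fix a non-trivial partition $V' = I \sqcup J$ such that no edge of $G$ crosses between $I$ and $J$ on the vertex set $V'$. By the preceding remark, $\xx_I$ and $\xx_J$ are independent, so the cross-cumulant characterization of Corollary \ref{cor:independentpartition} forces $\kappa_r(\xx)_{i_1,\ldots,i_r} = 0$, since the index tuple draws entries from both blocks. For the $(\Rightarrow)$ direction, I would argue by contrapositive: if $G'$ is connected, then no bipartition of $V'$ yields two vertex sets with no edge of $G$ between them, so, under the graph-induced independence, no bipartition of $\xx'=(x_{i_1},\ldots,x_{i_r})$ corresponds to a splitting into independent subvectors. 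Corollary \ref{cor:independentpartition} then imposes no forced vanishing constraint on $\kappa_r(\xx)_{i_1,\ldots,i_r}$.

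The main subtlety will be in that reverse direction: strictly speaking, a given cumulant entry could vanish by accident (e.g.\ in a jointly Gaussian model, where all higher-order cumulants vanish regardless of $G$), so the ``if and only if'' must be read as an equivalence of \emph{structural} zeros, that is, of vanishings forced by the independence structure of the model. With that reading, the proposition is an immediate consequence of the tensor-level equivalence already established in Corollary \ref{cor:independentpartition}, and the only real content is the translation between vertex-set disconnection in $G'$ and partitioned independence of the subvector $\xx'$.
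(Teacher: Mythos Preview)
Your proposal is correct and follows essentially the same route as the paper: translate disconnectedness of $G'$ into a partition of $\xx'$ into independent blocks and then invoke Corollary~\ref{cor:independentpartition}. Your added remark about the reverse direction really only certifying \emph{structural} zeros is a fair caveat that the paper's terse proof leaves implicit.
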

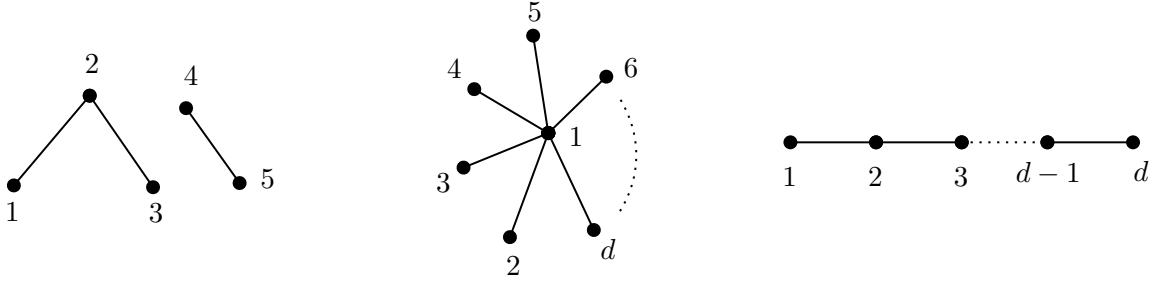
\begin{figure}[H]

    \tikzset{every picture/.style={line width=0.75pt}} 
    
    \begin{tikzpicture}[x=0.75pt,y=0.75pt,yscale=-0.9,xscale=0.9]
    
    \draw    (510.67,159.33) -- (558.67,159.33) ;
    \draw [shift={(558.67,159.33)}, rotate = 0] [color={rgb, 255:red, 0; green, 0; blue, 0 }  ][fill={rgb, 255:red, 0; green, 0; blue, 0 }  ][line width=0.75]      (0, 0) circle [x radius= 3.35, y radius= 3.35]   ;
    \draw [shift={(510.67,159.33)}, rotate = 0] [color={rgb, 255:red, 0; green, 0; blue, 0 }  ][fill={rgb, 255:red, 0; green, 0; blue, 0 }  ][line width=0.75]      (0, 0) circle [x radius= 3.35, y radius= 3.35]   ;
    \draw    (462.67,159.33) -- (510.67,159.33) ;
    \draw [shift={(510.67,159.33)}, rotate = 0] [color={rgb, 255:red, 0; green, 0; blue, 0 }  ][fill={rgb, 255:red, 0; green, 0; blue, 0 }  ][line width=0.75]      (0, 0) circle [x radius= 3.35, y radius= 3.35]   ;
    \draw [shift={(462.67,159.33)}, rotate = 0] [color={rgb, 255:red, 0; green, 0; blue, 0 }  ][fill={rgb, 255:red, 0; green, 0; blue, 0 }  ][line width=0.75]      (0, 0) circle [x radius= 3.35, y radius= 3.35]   ;
    \draw  [dash pattern={on 0.84pt off 2.51pt}]  (558.67,159.33) -- (606.67,159.33) ;
    \draw [shift={(606.67,159.33)}, rotate = 0] [color={rgb, 255:red, 0; green, 0; blue, 0 }  ][fill={rgb, 255:red, 0; green, 0; blue, 0 }  ][line width=0.75]      (0, 0) circle [x radius= 3.35, y radius= 3.35]   ;
    \draw [shift={(558.67,159.33)}, rotate = 0] [color={rgb, 255:red, 0; green, 0; blue, 0 }  ][fill={rgb, 255:red, 0; green, 0; blue, 0 }  ][line width=0.75]      (0, 0) circle [x radius= 3.35, y radius= 3.35]   ;
    \draw    (606.67,159.33) -- (654.67,159.33) ;
    \draw [shift={(654.67,159.33)}, rotate = 0] [color={rgb, 255:red, 0; green, 0; blue, 0 }  ][fill={rgb, 255:red, 0; green, 0; blue, 0 }  ][line width=0.75]      (0, 0) circle [x radius= 3.35, y radius= 3.35]   ;
    \draw [shift={(606.67,159.33)}, rotate = 0] [color={rgb, 255:red, 0; green, 0; blue, 0 }  ][fill={rgb, 255:red, 0; green, 0; blue, 0 }  ][line width=0.75]      (0, 0) circle [x radius= 3.35, y radius= 3.35]   ;
    \draw    (27.5,183.5) -- (70,133.17) ;
    \draw [shift={(70,133.17)}, rotate = 310.18] [color={rgb, 255:red, 0; green, 0; blue, 0 }  ][fill={rgb, 255:red, 0; green, 0; blue, 0 }  ][line width=0.75]      (0, 0) circle [x radius= 3.35, y radius= 3.35]   ;
    \draw [shift={(27.5,183.5)}, rotate = 310.18] [color={rgb, 255:red, 0; green, 0; blue, 0 }  ][fill={rgb, 255:red, 0; green, 0; blue, 0 }  ][line width=0.75]      (0, 0) circle [x radius= 3.35, y radius= 3.35]   ;
    \draw    (70,133.17) -- (105.5,184.5) ;
    \draw [shift={(105.5,184.5)}, rotate = 55.33] [color={rgb, 255:red, 0; green, 0; blue, 0 }  ][fill={rgb, 255:red, 0; green, 0; blue, 0 }  ][line width=0.75]      (0, 0) circle [x radius= 3.35, y radius= 3.35]   ;
    \draw [shift={(70,133.17)}, rotate = 55.33] [color={rgb, 255:red, 0; green, 0; blue, 0 }  ][fill={rgb, 255:red, 0; green, 0; blue, 0 }  ][line width=0.75]      (0, 0) circle [x radius= 3.35, y radius= 3.35]   ;
    \draw    (124,140.17) -- (154,182.17) ;
    \draw [shift={(154,182.17)}, rotate = 54.46] [color={rgb, 255:red, 0; green, 0; blue, 0 }  ][fill={rgb, 255:red, 0; green, 0; blue, 0 }  ][line width=0.75]      (0, 0) circle [x radius= 3.35, y radius= 3.35]   ;
    \draw [shift={(124,140.17)}, rotate = 54.46] [color={rgb, 255:red, 0; green, 0; blue, 0 }  ][fill={rgb, 255:red, 0; green, 0; blue, 0 }  ][line width=0.75]      (0, 0) circle [x radius= 3.35, y radius= 3.35]   ;
    \draw    (305.5,212.5) -- (327,154.17) ;
    \draw [shift={(327,154.17)}, rotate = 290.23] [color={rgb, 255:red, 0; green, 0; blue, 0 }  ][fill={rgb, 255:red, 0; green, 0; blue, 0 }  ][line width=0.75]      (0, 0) circle [x radius= 3.35, y radius= 3.35]   ;
    \draw [shift={(305.5,212.5)}, rotate = 290.23] [color={rgb, 255:red, 0; green, 0; blue, 0 }  ][fill={rgb, 255:red, 0; green, 0; blue, 0 }  ][line width=0.75]      (0, 0) circle [x radius= 3.35, y radius= 3.35]   ;
    \draw    (318.5,99.5) -- (327,154.17) ;
    \draw [shift={(327,154.17)}, rotate = 81.16] [color={rgb, 255:red, 0; green, 0; blue, 0 }  ][fill={rgb, 255:red, 0; green, 0; blue, 0 }  ][line width=0.75]      (0, 0) circle [x radius= 3.35, y radius= 3.35]   ;
    \draw [shift={(318.5,99.5)}, rotate = 81.16] [color={rgb, 255:red, 0; green, 0; blue, 0 }  ][fill={rgb, 255:red, 0; green, 0; blue, 0 }  ][line width=0.75]      (0, 0) circle [x radius= 3.35, y radius= 3.35]   ;
    \draw    (285.5,129.5) -- (327,154.17) ;
    \draw [shift={(327,154.17)}, rotate = 30.73] [color={rgb, 255:red, 0; green, 0; blue, 0 }  ][fill={rgb, 255:red, 0; green, 0; blue, 0 }  ][line width=0.75]      (0, 0) circle [x radius= 3.35, y radius= 3.35]   ;
    \draw [shift={(285.5,129.5)}, rotate = 30.73] [color={rgb, 255:red, 0; green, 0; blue, 0 }  ][fill={rgb, 255:red, 0; green, 0; blue, 0 }  ][line width=0.75]      (0, 0) circle [x radius= 3.35, y radius= 3.35]   ;
    \draw    (359.5,122.5) -- (327,154.17) ;
    \draw [shift={(327,154.17)}, rotate = 135.74] [color={rgb, 255:red, 0; green, 0; blue, 0 }  ][fill={rgb, 255:red, 0; green, 0; blue, 0 }  ][line width=0.75]      (0, 0) circle [x radius= 3.35, y radius= 3.35]   ;
    \draw [shift={(359.5,122.5)}, rotate = 135.74] [color={rgb, 255:red, 0; green, 0; blue, 0 }  ][fill={rgb, 255:red, 0; green, 0; blue, 0 }  ][line width=0.75]      (0, 0) circle [x radius= 3.35, y radius= 3.35]   ;
    \draw    (327,154.17) -- (279.5,173.5) ;
    \draw [shift={(279.5,173.5)}, rotate = 157.85] [color={rgb, 255:red, 0; green, 0; blue, 0 }  ][fill={rgb, 255:red, 0; green, 0; blue, 0 }  ][line width=0.75]      (0, 0) circle [x radius= 3.35, y radius= 3.35]   ;
    \draw [shift={(327,154.17)}, rotate = 157.85] [color={rgb, 255:red, 0; green, 0; blue, 0 }  ][fill={rgb, 255:red, 0; green, 0; blue, 0 }  ][line width=0.75]      (0, 0) circle [x radius= 3.35, y radius= 3.35]   ;
    \draw    (352.5,208.5) -- (327,154.17) ;
    \draw [shift={(327,154.17)}, rotate = 244.86] [color={rgb, 255:red, 0; green, 0; blue, 0 }  ][fill={rgb, 255:red, 0; green, 0; blue, 0 }  ][line width=0.75]      (0, 0) circle [x radius= 3.35, y radius= 3.35]   ;
    \draw [shift={(352.5,208.5)}, rotate = 244.86] [color={rgb, 255:red, 0; green, 0; blue, 0 }  ][fill={rgb, 255:red, 0; green, 0; blue, 0 }  ][line width=0.75]      (0, 0) circle [x radius= 3.35, y radius= 3.35]   ;
    \draw  [dash pattern={on 0.84pt off 2.51pt}]  (367.5,135.5) .. controls (380.5,157.5) and (378.5,181.5) .. (365.5,200.5) ;
    
    \draw (558.22,178.53) node  [font=\small]  {$3$};
    \draw (510.22,178.53) node  [font=\small]  {$2$};
    \draw (462.22,178.53) node  [font=\small]  {$1$};
    \draw (659.22,175.53) node  [font=\small]  {$d$};
    \draw (607.22,176.53) node  [font=\small]  {$d-1$};
    \draw (21,191.07) node [anchor=north west][inner sep=0.75pt]  [font=\small]  {$1$};
    \draw (121,115.07) node [anchor=north west][inner sep=0.75pt]  [font=\small]  {$4$};
    \draw (101.5,191.9) node [anchor=north west][inner sep=0.75pt]  [font=\small]  {$3$};
    \draw (65.67,108.4) node [anchor=north west][inner sep=0.75pt]  [font=\small]  {$2$};
    \draw (164,173.07) node [anchor=north west][inner sep=0.75pt]  [font=\small]  {$5$};
    \draw (336.75,149.23) node [anchor=north west][inner sep=0.75pt]  [font=\small]  {$1$};
    \draw (313.67,79.4) node [anchor=north west][inner sep=0.75pt]  [font=\small]  {$5$};
    \draw (268.67,111.4) node [anchor=north west][inner sep=0.75pt]  [font=\small]  {$4$};
    \draw (262.5,174.9) node [anchor=north west][inner sep=0.75pt]  [font=\small]  {$3$};
    \draw (301.67,221.4) node [anchor=north west][inner sep=0.75pt]  [font=\small]  {$2$};
    \draw (367.67,110.4) node [anchor=north west][inner sep=0.75pt]  [font=\small]  {$6$};
    \draw (354.5,211.9) node [anchor=north west][inner sep=0.75pt]  [font=\small]  {$d$};

    \end{tikzpicture}

    \caption{(L) Graph with 5 vertices and two connected components. \\ (M) Star tree $S_d$ with $d-1$ leaves. (R) Chain graph with $d$ vertices.}
    \label{fig:G1}
\end{figure}
\begin{exmp}
    Consider the graph $G$ in the left part of Figure \ref{fig:G1}. Consider a random vector $\xx=(x_1,x_2,x_3,x_4,x_5)$ with independence induced by $G.$ That is, $x_1 \indep x_3$ and $x_i \indep x_j$ for all $i=1,2,3$ and $j=4,5.$ Using the combinatorial formula in Theorem \ref{thm:combo} and the fact that moments factor due to independence, we can compute some entries of the 3rd and 4th order cumulants of $\xx$. 
    \par Consider the subgraph $G_0$ of $G$ formed by vertices $V_0=\{1,2,3\}$, it has a single connected component. Then, 
    {\small\begin{align*}
        \kappa_3(\xx)_{123} &= \mu_{123} - \mu_1 \mu_{23}-\mu_2 \mu_{13}-\mu_3 \mu_{12} +2\mu_1\mu_2\mu_3 
        =\mu_{123}-\mu_1\mu_{23}-\mu_1\mu_2\mu_3 -\mu_3 \mu_{12}+2\mu_1\mu_2\mu_3 \\ &=\mu_{123}-\mu_1\mu_{23}-\mu_3\mu_{12}+\mu_{1}\mu_2\mu_3.
\end{align*}}
On the other hand, the subgraphs $G_i$ formed by the following vertex sets $V_1 = \{1,2,4\}$, $V_2=\{1,3\}$, $V_3=\{1,2,4,5\}$ have two connected components. Then,
{\small\begin{align*}
    \kappa_3(\xx)_{124}=\ &\mu_{124}-\mu_1\mu_{24}-\mu_2\mu_{14}-\mu_4\mu_{12}+2\mu_1\mu_2\mu_4 = \mu_4\mu_{12} -\mu_1\mu_2\mu_4-\mu_1\mu_2\mu_4 +2\mu_1\mu_2\mu_4=0,\\
    \kappa_3(\xx)_{133} =\ &\mu_{133}-\mu_1\mu_{33}-\mu_3\mu_{13}-\mu_3\mu_{13}+2\mu_1\mu_3\mu_3 = \mu_1\mu_{33} -\mu_1\mu_{33}-2\mu_1\mu_3^2 +2\mu_1\mu_3^2=0,\\
    \kappa_4(\xx)_{1245}=\ &\mu_{1245}-\mu_{12}\mu_{45}-\mu_{14}\mu_{25}-\mu_{15}\mu_{24}-\mu_1 \mu_{245} -\mu_2 \mu_{145}-\mu_4 \mu_{125} - \mu_5 \mu_{124}+\\
&2\mu_1\mu_2\mu_{45}+2\mu_1\mu_4\mu_{25}+2\mu_2\mu_4\mu_{15}+2\mu_1\mu_5\mu_{24}+2\mu_2\mu_5\mu_{14}+2\mu_4\mu_5\mu_{12}-6\mu_1\mu_2\mu_4\mu_5\\
    =\ &\mu_{12}\mu_{45}-\mu_{12}\mu_{45}-\mu_{1}\mu_2\mu_4\mu_{5}-\mu_{1}\mu_2\mu_4\mu_{5}-\mu_1 \mu_2 \mu_{45} -\mu_1 \mu_2 \mu_{45}-\mu_4 \mu_5\mu_{12} - \mu_4\mu_5 \mu_{12}\\
+&2\mu_1\mu_2\mu_{45}+2\mu_1\mu_2\mu_4\mu_{5}+2\mu_1\mu_2\mu_4\mu_{5}+2\mu_1\mu_2\mu_4\mu_{5}+2\mu_1\mu_2\mu_{4}\mu_5+2\mu_{4}\mu_5\mu_{12}\\ -&6\mu_1\mu_2\mu_4\mu_5=0\\
\end{align*}}
    \end{exmp}

Proposition \ref{prop:graph} allows us to establish the sparsity pattern of cumulant tensors of random vectors with independence induced by some special classes of graphs.

\begin{cor}\label{cor:starTree}
    Let $S_{d}$ be a star graph with internal node $1$ and $d-1$ leaves $\{2,\ldots,d\}$ as in Figure \ref{fig:G1} (M). If $\xx \in \RR^d$ is a random vector with independence induced by the graph $S_d$ then $\kappa_r(\xx)_{i_1\ldots i_r} = 0$ if and only if the indices $i_1\ldots i_r$ are different from $1$ and not all of them are equal.
\end{cor}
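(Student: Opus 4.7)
The plan is to derive this as a direct application of Proposition \ref{prop:graph}. Recall that in the star tree $S_d$, vertex $1$ is adjacent to every leaf $2, \ldots, d$, and no two leaves are adjacent to each other. So for any index set $V' = \{i_1, \ldots, i_r\} \subseteq [d]$ (viewed as a set, dropping repetitions), the induced subgraph $G'$ depends only on whether $1 \in V'$ and on $|V'|$.

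First I would split into cases according to whether $1$ appears among the indices. If $1 \in \{i_1, \ldots, i_r\}$, then in $G'$ the vertex $1$ is joined by an edge in $S_d$ to every other vertex of $V'$, so $G'$ is a (possibly trivial) star and hence connected; by Proposition \ref{prop:graph} this gives $\kappa_r(\xx)_{i_1 \ldots i_r} \neq 0$. If instead $1 \notin \{i_1, \ldots, i_r\}$, then $V' \subseteq \{2, \ldots, d\}$ and $G'$ has no edges at all, being the edgeless graph on $V'$.

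Next I would handle the second case by subdividing further on the size of $V'$. If $|V'| = 1$, meaning all of $i_1, \ldots, i_r$ are equal to a single leaf $j \neq 1$, then $G'$ is a one-vertex graph, which is connected, so again $\kappa_r(\xx)_{i_1 \ldots i_r} \neq 0$ by Proposition \ref{prop:graph}. If $|V'| \geq 2$, then $G'$ is the edgeless graph on at least two vertices and is therefore disconnected, so $\kappa_r(\xx)_{i_1 \ldots i_r} = 0$.

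Combining the cases, $\kappa_r(\xx)_{i_1 \ldots i_r} = 0$ exactly when $1 \notin \{i_1, \ldots, i_r\}$ and not all the indices coincide, which is the claimed statement. I do not anticipate any real obstacle here; the only mild subtlety is recognizing that a one-vertex induced subgraph is connected, so that the ``all indices equal to a common leaf'' case correctly falls on the nonzero side of the dichotomy.
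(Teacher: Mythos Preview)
Your argument is correct and is exactly the intended one: the paper states this result as an immediate corollary of Proposition~\ref{prop:graph} without giving a separate proof, and your case split on whether $1$ lies in $\{i_1,\ldots,i_r\}$ and then on $|V'|$ is precisely how one reads it off. The only point worth noting explicitly, which you already flagged, is that a single-vertex induced subgraph counts as connected, so the diagonal entries $\kappa_r(\xx)_{j\cdots j}$ for a leaf $j$ land on the nonzero side.
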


\begin{cor}\label{cor:chainTree}
   Consider a chain graph $G$ as shown in Figure \ref{fig:G1} (R). If $\xx \in \RR^d$ has independence induced by $G$, then the entries $\kappa_r(\xx)_{i_1,\ldots,i_r}=0$ if {and only if} $|i_j-i_{j+1}|>1$ for some $j$ which corresponds to subgraphs with disconnected components.
\end{cor}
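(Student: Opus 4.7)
The plan is to reduce this directly to Proposition \ref{prop:graph}, which already characterizes vanishing of $\kappa_r(\xx)_{i_1,\ldots,i_r}$ in terms of whether the induced subgraph on the index multiset $\{i_1,\ldots,i_r\}$ is disconnected. All that remains is to translate this combinatorial connectivity condition into the explicit numerical gap condition $|i_j - i_{j+1}| > 1$ tailored to the chain graph $G$ with edge set $\{(k,k+1) : 1 \le k \le d-1\}$.

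First I would exploit the symmetry of cumulant tensors to assume, without loss of generality, that the indices are sorted as $i_1 \le i_2 \le \cdots \le i_r$. Let $V' = \{i_1,\ldots,i_r\}$ be the set of distinct vertices appearing. The induced subgraph $G'$ on $V'$ inherits an edge $(u,v)$ from $G$ precisely when $u,v \in V'$ and $|u-v|=1$. Since $G$ is a path, $G'$ decomposes into maximal runs of consecutive integers in $V'$, and $G'$ is connected if and only if $V'$ itself is a contiguous interval in $\{1,\ldots,d\}$. After sorting, this is equivalent to $i_{j+1} - i_j \le 1$ for every $j \in \{1,\ldots,r-1\}$, since consecutive equal indices contribute a zero gap and the remaining gaps equal the gaps between distinct consecutive elements of $V'$.

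Applying Proposition \ref{prop:graph}, the cumulant entry $\kappa_r(\xx)_{i_1,\ldots,i_r}$ vanishes if and only if $G'$ is disconnected, which by the previous step happens if and only if $|i_j - i_{j+1}| > 1$ for some $j$ in the sorted ordering. Since permuting the indices does not change the cumulant entry, the same criterion applies to any ordering after sorting.

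There is no substantive obstacle; the only minor subtlety is making sure that repeated indices are handled correctly (they do not create a gap and do not disconnect the induced subgraph), which is why the reduction to sorted distinct indices is the cleanest route.
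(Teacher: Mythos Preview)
Your proposal is correct and matches the paper's intent: the corollary is stated in the paper without an explicit proof, as an immediate consequence of Proposition~\ref{prop:graph}, and your argument carries out precisely that reduction. Your added care in sorting the indices (using symmetry of the cumulant tensor) and handling repeated indices makes explicit an implicit assumption in the paper's statement, since the gap condition $|i_j-i_{j+1}|>1$ only characterizes disconnectedness of the induced subgraph after the indices are put in nondecreasing order.
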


{A random vector $\xx = (\xx_{1}, \ldots, \xx_{m}) \in\RR^d$ with independent components $\xx_i$ can be understood as a random vector with independence induced by a graph with disconnected components. Therefore,} 
the following result is equivalent to {Corollary \ref{cor:independentpartition}} and follows from Proposition \ref{prop:graph}.
\begin{prop}
    \label{prop:subspacegraph}
    Consider a graph $G= I_{1}\sqcup \hdots \sqcup I_{m}$ where each disconnected component is complete. Let $\xx\in\RR^d$ be a random vector with independence induced by $G$. Then $\kappa_{i_1,\hdots, i_r}=0$ when at least two indices correspond to vertices in different components, i.e. $i_\ell\in I_{j}$ and $i_{\ell'}\in I_{{k}}$ for $\ell \neq \ell'$ and $j\neq k.$\\
\end{prop}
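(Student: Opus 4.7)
The plan is to reduce the statement directly to Proposition \ref{prop:graph}, which already characterizes the vanishing of $\kappa_r(\xx)_{i_1,\ldots,i_r}$ in terms of connectedness of the induced subgraph of $G$ on the index set $\{i_1,\ldots,i_r\}$. Since $G$ is itself a disjoint union of complete components $I_1,\ldots,I_m$ with no edges between distinct $I_j$ and $I_k$, the induced subgraph on any subset of indices inherits exactly this block structure.

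The first step is to fix a multi-index $(i_1,\ldots,i_r)$ with at least two entries in different components, and partition the set of indices $S = \{i_1,\ldots,i_r\}$ according to which $I_j$ each element belongs to, writing $S = S_{j_1}\sqcup\cdots\sqcup S_{j_\ell}$ where $S_{j_p} = S\cap I_{j_p}$ and $\ell \geq 2$. Next I would observe that in the induced subgraph $G'$ on $S$, there are no edges between $S_{j_p}$ and $S_{j_q}$ for $p\neq q$, since the original graph $G$ has no edges between $I_{j_p}$ and $I_{j_q}$. Hence $G'$ is disconnected: the sets $S_{j_1}$ and $S\setminus S_{j_1}$ form a nontrivial separation, both nonempty because $\ell\geq 2$.

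Finally, applying Proposition \ref{prop:graph} to the disconnected induced subgraph $G'$ gives $\kappa_r(\xx)_{i_1,\ldots,i_r}=0$, completing the argument. Alternatively, one could bypass Proposition \ref{prop:graph} and cite Corollary \ref{cor:independentpartition} directly: the partition $I_1,\ldots,I_m$ already exhibits $\xx$ as a vector of independent subvectors $\xx_{I_1},\ldots,\xx_{I_m}$ (since any two vertices in different components have no edge and are thus independent), and the hypothesis on the multi-index says it contains indices from at least two distinct blocks, which is exactly the vanishing condition in that corollary.

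I do not expect any real obstacle here: the statement is essentially a translation of the partitioned-independence result (Corollary \ref{cor:independentpartition}) into the graph language of Definition \ref{def:indepGraph}, and the completeness of each $I_j$ plays no active role in the proof — it only ensures that the graph-induced independence structure coincides exactly with the partition structure rather than imposing additional independences within blocks. The only thing to be careful about is confirming that $\xx_{I_j}$ and $\xx_{I_k}$ are independent as \emph{vectors}, not just that each pair $x_i, x_j$ with $i\in I_j$, $j\in I_k$ is pairwise independent; this follows from Proposition \ref{prop:partitioning} applied to the full partition of indices between the two blocks.
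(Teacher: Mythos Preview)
Your proposal is correct and matches the paper's own treatment: the paper simply remarks that the result ``is equivalent to Corollary~\ref{cor:independentpartition} and follows from Proposition~\ref{prop:graph},'' and you have spelled out both of those reductions explicitly. Your additional observation about the pairwise-versus-joint independence subtlety is well taken, though the paper itself glosses over it in the proof of Proposition~\ref{prop:graph}.
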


\section{Partitioned Independence Component Analysis}\label{sec:pica}

In this section, we consider a generalization of the classical ICA problem described in \eqref{eq:icastatement}. We will consider different independence assumptions on the random variables $\sss$ and study from an algebraic perspective to which extent the mixing matrix $A$ can be estimated.

\subsection{The PICA model}\label{sec:PICAmodel}
Let $\mathbf{I} = I_{1}\sqcup \hdots \sqcup I_{m}$ be a partition of $[d]$ such that $|I_i|=k_i$. Consider a random vector $\sss = (\sss_{1}, \ldots, \sss_{m})\in\RR^d$ where each subvector is of the form $\sss_{i} = (s_{i_1},\ldots,s_{i_{k}})\in\RR^{k_i}$ with $i_1, \ldots i_k\in I_i$ and is independent of the others. We will say that $\sss$ has partitioned independence given by $\mathbf{I}$.
The model is defined by the same equation as ICA,
$$\yy=A\sss$$
where $s_i$ and $s_j$ are independent if and only if $i$ and $j$ are in different subsets $I_i\neq I_j.$ That is, we assume that the vectors $\sss_{1}, \ldots, \sss_{m}$ are independent but dependencies within the entries of each vector $\sss_{i}$ are allowed. As in ICA, we assume that both the mixing matrix $A$ and the source signals $\sss$ are hidden and we observe the mixed signals $\yy$ that have the same independence as $\sss.$

{If the components inside each subvector $\sss_i$ are dependent, this is known as independence subspace analysis. However, we will refer to this model as \textit{Partitioned Independence Component Analysis} (PICA), since extra independent conditions can be assumed among the components of $\sss_i$.}

\subsection{Algebraic approach in Component Analysis}\label{sec:connectionAlgStats}

In this section, we want to review the problem of recovering the mixing matrix $A$, or alternatively the unmixing matrix $W$ in \eqref{eq:inverseICA}, from an algebraic perspective by using the cumulant tensors of $\sss$.
We aim to outline the connection between our understanding of component analysis in a statistical sense and an algebraic sense. We follow the approach in \cite{nonica}, and we characterize the recovery of $W$ from an algebraic perspective by using the zero restrictions on the cumulants of $\sss$ determined by the independence of $\sss$.

As shown in Section \ref{sec:ICA}, the goal of ICA is to recover the matrix $A$ from $\yy=A\sss$, or equivalently the matrix $W$ from $\sss=W\yy$ so that the original signals $\sss$ may be recovered.
Independence on the entries of $\sss$ can be translated into zero entries on the cumulant tensors $\kappa_{r}(\sss)$ of $\sss$ as we have shown in Propositions \ref{prop:partitioning}, \ref{prop:meanindsparsity}, and \ref{prop:graph}. In this section, instead of independence conditions in $\sss$, we will assume additional structure in the cumulant tensors $\kappa(\sss).$

More generally, consider an algebraic variety $\cV$ of symmetric tensors $S^r(\RR^d)$. 
%
By multilinearity of cumulants we have that $$W\bullet \kappa_r(\yy) =\kappa_r(W\yy)=\kappa_r(\sss),$$
which means that $W\bullet \kappa_r(\yy) \in \cV$ if and only if $\kappa_r(\sss)\in \cV.$

As seen in Section \ref{sec:ICA}, any possible candidate matrix for \eqref{eq:inverseICA} is of the form $B=QW$ such that $B\yy=\sss$ and $Q$ is orthogonal. Then we have the following, $$B\bullet \kappa_r(\yy) = (QW)\bullet \kappa_r(\yy)= Q\bullet (W\bullet \kappa_r(\yy))=Q\bullet \kappa_r(\sss).$$
Hence, $B\bullet \kappa_r(\yy)\in \cV$ if and only if $Q\bullet \kappa_r(\sss)\in \cV$.

Therefore, the identification of $W$ is reduced to the subset of orthogonal matrices that preserve the structure of $T=\kappa_r(\sss)$, or in other words, the subset {$\{Q\in O(d)\mid Q\bullet T\in \mathcal{V}\},$ {where $O(d)$ is the set of $d\times d$ orthogonal matrices.}} {For a generic tensor $T\in\cV$ we denote by}
$$\mathcal{G}_{T}(\mathcal{V}):= \{Q\in O(d)\mid Q\bullet T\in \mathcal{V}\}.$$
{the set of orthogonal matrices $Q$ such that $Q\bullet T\in \cV$.}
This construction can be summarized in the following proposition:
\begin{prop}[\cite{nonica}]\label{prop:connectionAlgStat}
    Consider the model \eqref{eq:inverseICA}. Suppose that for some $r\geq 3$ we have that $T=\kappa_r(\sss) \in \cV \subseteq S^r(\RR^d)$. Then $W$ can be identified up to the set $$\{QW : Q \in \mathcal{G}_T(\mathcal{V})\}.$$
\end{prop}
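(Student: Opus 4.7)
The plan is to establish the claim as a two-way set equality: the collection of unmixing matrices consistent with both the observation $\yy$ and the structural constraint $\kappa_r(\sss) \in \cV$ coincides exactly with $\{QW : Q \in \mathcal{G}_T(\mathcal{V})\}$.

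First, I would invoke the reduction from Section \ref{sec:ICA}. After centering and whitening, any candidate unmixing matrix $B$ satisfying $B\yy = \sss'$ for some hypothetical source vector $\sss'$ (again centered and white) must differ from $W$ by an orthogonal factor, i.e.\ $B = QW$ for some $Q \in O(d)$. This is because $BW^{-1}$ sends the white vector $\sss = W\yy$ to another white vector $\sss'$, forcing it to be orthogonal. So from the outset the ambiguity set is contained in $\{QW : Q \in O(d)\}$.

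Second, I would translate the modeling constraint via multilinearity of cumulants. A candidate $B = QW$ is admissible precisely when the recovered source $B\yy = Q\sss$ still has its $r$-th cumulant tensor in $\cV$. Using multilinearity (stated in the Properties of Cumulant Tensors subsection) together with the associativity of the bullet action,
\[
\kappa_r(B\yy) \;=\; B \bullet \kappa_r(\yy) \;=\; (QW) \bullet \kappa_r(\yy) \;=\; Q \bullet \bigl(W \bullet \kappa_r(\yy)\bigr) \;=\; Q \bullet \kappa_r(\sss) \;=\; Q \bullet T.
\]
Hence admissibility of $B = QW$ is equivalent to $Q \bullet T \in \cV$, which by definition is $Q \in \mathcal{G}_T(\cV)$. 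Conversely, for any $Q \in \mathcal{G}_T(\cV)$, the same computation shows that $B = QW$ produces recovered sources satisfying the structural constraint, so every element of $\{QW : Q \in \mathcal{G}_T(\cV)\}$ is admissible.

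The argument is essentially bookkeeping once the whitening reduction to $O(d)$ and the multilinearity identity $\kappa_r(B\yy) = B \bullet \kappa_r(\yy)$ are in hand, so no substantial obstacle arises. The only point worth stating carefully is the notion of identifiability being used: the data available are the distribution of $\yy$ together with the assumption $\kappa_r(\sss) \in \cV$, and the claim is that this information distinguishes $W$ from $W'$ if and only if $W' \notin \{QW : Q \in \mathcal{G}_T(\cV)\}$.
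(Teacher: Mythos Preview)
Your argument is correct and mirrors the paper's own justification, which appears as the discussion immediately preceding the proposition: reduce the candidate unmixing matrices to $\{QW : Q \in O(d)\}$ via the whitening argument of Section~\ref{sec:ICA}, then use multilinearity $B\bullet\kappa_r(\yy) = Q\bullet(W\bullet\kappa_r(\yy)) = Q\bullet T$ to characterize admissibility as $Q\in\mathcal{G}_T(\cV)$. There is no substantive difference in approach.
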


For identifiability purposes, one is interested in the cases when $\mathcal{G}_T(\mathcal{V})$ contains a single element, such that $W$ can be uniquely recovered, or is a finite structured subset. However, in this work, we aim to characterize the set $\mathcal{G}_T(\mathcal{V})$ for some particular algebraic sets $\cV.$

{That is, we consider the following general problem:
Let $T\in\cV$ be a generic tensor in some algebraic subset $\cV\subseteq S^r(\RR^n)$ of symmetric tensors. Then, we want to characterize the set $\mathcal{G}_T(\mathcal{V})$ of orthogonal matrices $Q$ such that $Q\bullet T\in\cV$.
}

This question was first introduced in \cite{nonica}, where the authors characterize $\mathcal{G}_T(\mathcal{V})$ for some varieties $\mathcal{V}$, and in particular when $\mathcal{V}$ is the set of diagonal tensors.
{Let $SP(d)$ denote the set of signed permutation matrices, then}, they prove the following result:

\begin{prop}[\cite{nonica}]\label{prop:SPdiagTensors}
    Let $T\in S^r(\RR^d)$ with $r\geq 3$ be a diagonal tensor with at most one zero entry on the diagonal. Then $Q \bullet T \in \cV$ if and only if $Q \in SP(d)$, i.e. $\mathcal{G}_T(\mathcal{V}){=} SP(d)$ 
\end{prop}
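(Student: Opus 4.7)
The forward direction is immediate: for $Q \in SP(d)$, the multilinear action of $Q$ merely permutes and signs the diagonal entries, so $Q \bullet T$ remains diagonal. For the converse I would pass to the polynomial picture. Since $T$ is diagonal with entries $\lambda_i := T_{ii\cdots i}$, we have $f_T(\xx) = \sum_{i=1}^d \lambda_i x_i^r$. By Lemma \ref{lem:hessfAT}, $f_{Q\bullet T}(\xx) = f_T(Q^T \xx) = \sum_i \lambda_i (c_i^T \xx)^r$, where $c_1, \ldots, c_d$ are the columns of $Q$. Writing $\mu_k := (Q\bullet T)_{kk\cdots k}$, the assumption that $Q \bullet T$ is diagonal translates into the polynomial identity
\[
\sum_{i=1}^d \lambda_i (c_i^T \xx)^r \;=\; \sum_{k=1}^d \mu_k x_k^r .
\]

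The heart of the argument is to invoke the uniqueness of the Waring decomposition in this setup. Both sides write the same degree-$r$ form as a sum of $r$-th powers of linearly independent linear forms: the $c_i$'s (columns of an orthogonal matrix) are orthonormal, as are the $e_k$'s. For $r \geq 3$, a classical identifiability theorem (provable via apolarity or Kruskal-type arguments) asserts that any such decomposition into linearly independent summands is unique up to permutation of the summands and multiplication of each linear form by an $r$-th root of unity. Applied here, the sets of lines $\{\operatorname{span}(c_i) : \lambda_i \neq 0\}$ and $\{\operatorname{span}(e_k) : \mu_k \neq 0\}$ must coincide; since the $c_i$ are real unit vectors, this forces $c_i = \pm e_{\sigma(i)}$ for some injection $\sigma$ on the indices $i$ with $\lambda_i \neq 0$.

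To finish, if every $\lambda_i$ is nonzero, then $\sigma$ is a bijection of $[d]$ and $Q$ is a signed permutation. In the remaining case where exactly one entry $\lambda_{i_0}$ vanishes, $\sigma$ bijects $[d]\setminus\{i_0\}$ onto some $[d]\setminus\{k_0\}$, and orthonormality of the columns of $Q$ then forces $c_{i_0} = \pm e_{k_0}$, so again $Q \in SP(d)$. I expect the uniqueness of the Waring decomposition to be the main obstacle: it genuinely requires both $r \geq 3$ (for $r=2$, any orthogonal change of basis preserving the spectrum suffices) and the assumption of at most one vanishing diagonal entry (with two or more zeros one can rotate freely within the kernel of $T$). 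A more self-contained alternative would use the Hessian identity of Lemma \ref{lem:hessfAT} evaluated at carefully chosen vectors, forcing $Q$ to conjugate a diagonal matrix with distinct entries to another diagonal matrix, which classically forces $Q$ to be a signed permutation.
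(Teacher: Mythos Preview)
The paper does not supply its own proof of this proposition; it is quoted from \cite{nonica}. However, immediately before the proof of Theorem~\ref{thm:algPICA} the paper remarks that it ``follow[s] a similar strategy as the one used in Proposition~\ref{prop:SPdiagTensors},'' and that strategy is precisely your \emph{alternative}: the Hessian identity of Lemma~\ref{lem:hessfAT}. Concretely, since both $T$ and $Q\bullet T$ are diagonal, the Hessians $\nabla^2 f_T$ and $\nabla^2 f_{Q\bullet T}$ are diagonal matrices with entries $r(r-1)\lambda_i x_i^{r-2}$ and $r(r-1)\mu_k x_k^{r-2}$ respectively, and the identity $\nabla^2 f_{Q\bullet T}(\xx)\,Q = Q\,\nabla^2 f_T(Q^T\xx)$ read entrywise gives $\mu_k x_k^{r-2}\, Q_{ki} = Q_{ki}\,\lambda_i (c_i^T\xx)^{r-2}$. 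Whenever $Q_{ki}\neq 0$ and $\lambda_i\neq 0$ this forces $c_i$ proportional to $e_k$, hence $c_i=\pm e_k$; the ``at most one zero'' hypothesis together with orthogonality then pins down the remaining column exactly as you describe.

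Your primary route via Waring uniqueness is correct and genuinely different. It has the virtue of making the roles of $r\geq 3$ and the ``at most one zero'' hypothesis transparent through the lens of tensor rank, and it connects the statement to the broader identifiability literature (Kruskal, apolarity). The cost is that it imports a nontrivial uniqueness theorem as a black box, whereas the Hessian argument is entirely self-contained using only Lemma~\ref{lem:hessfAT} and elementary polynomial comparison. Since the paper's own generalization (Theorem~\ref{thm:algPICA}) is proved via the Hessian route, your alternative is the one that aligns with the surrounding text.
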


Together with Proposition \ref{prop:connectionAlgStat} the authors of \cite{nonica} can state the following result, which generalizes the classical result by Comon (see Theorem \ref{thm:comons}).

\begin{thm}[\cite{nonica}]\label{thm:diagT}
    Consider the model (\eqref{eq:icastatement}) with $\EE \yy=0$ and $\var (\yy)=I_d$, and suppose that for some $r\ge 3$ the tensor $\kappa_r(\yy)$ is diagonal with at most one zero on the diagonal. Then $A$ in \eqref{eq:icastatement} is identifiable up to permuting and swapping the signs of the rows (i.e. up to the action of $\operatorname{SP}(d)$).
\end{thm}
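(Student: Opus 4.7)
The plan is to obtain Theorem \ref{thm:diagT} as an almost immediate consequence of Proposition \ref{prop:connectionAlgStat} applied with $\cV$ taken to be the algebraic variety of diagonal tensors in $S^r(\RR^d)$, combined with the structural result Proposition \ref{prop:SPdiagTensors} that already characterizes $\mathcal{G}_T(\cV)$ for this choice.

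First, I would pin down the algebraic setup. The whitening hypotheses $\EE\yy = 0$ and $\var(\yy) = I_d$, together with the standing ICA convention $\EE(s_i^2) = 1$ (so that $\var(\sss) = I_d$), force the mixing matrix $A$ to lie in $O(d)$, so that the search for unmixing matrices $B$ satisfying $B\yy = \sss$ is restricted to the orthogonal group. The assumption on the diagonal structure of $\kappa_r$ with at most one zero on the diagonal places the tensor $T := \kappa_r(\sss)$ inside $\cV$ and, more importantly, satisfies the genericity condition required by Proposition \ref{prop:SPdiagTensors}.

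Next, I would apply Proposition \ref{prop:connectionAlgStat} to $T$ and $\cV$: it reduces the identifiability question for the unmixing $W = A^{-1}$ to describing the set $\mathcal{G}_T(\cV) = \{Q \in O(d) : Q \bullet T \in \cV\}$. Proposition \ref{prop:SPdiagTensors} identifies this set exactly with $SP(d)$. Combining the two, $W$ is determined uniquely up to the left action $W \mapsto QW$ with $Q \in SP(d)$; since $SP(d)$ is closed under inversion and transposition, translating back to $A = W^{-1}$ yields identifiability up to permutation and sign changes of the rows of $A$, which is the stated conclusion.

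The main obstacle in this chain does not lie in this theorem itself, which is essentially a statistical translation of an algebraic fact; the nontrivial content has already been absorbed into Proposition \ref{prop:SPdiagTensors}, whose proof must show that the only orthogonal transformations preserving diagonality of a generic diagonal tensor of order $r \geq 3$ are the signed permutations. Once that characterization is available, the assembly of Theorem \ref{thm:diagT} is a direct two-step argument: whitening places everything inside $O(d)$, and Propositions \ref{prop:connectionAlgStat} and \ref{prop:SPdiagTensors} do the rest.
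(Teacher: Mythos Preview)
Your proposal is correct and matches the paper's approach exactly: the paper derives Theorem~\ref{thm:diagT} directly by combining Proposition~\ref{prop:connectionAlgStat} with Proposition~\ref{prop:SPdiagTensors}, with no additional argument. One small remark: the theorem as stated hypothesizes that $\kappa_r(\yy)$ is diagonal, whereas both your write-up and Proposition~\ref{prop:connectionAlgStat} naturally place the hypothesis on $T=\kappa_r(\sss)$; this appears to be a typographical slip in the paper (the same occurs in Theorem~\ref{thm:generalThmAlgPICA}), and your reading is the intended one.
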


This result implies that one does not need to require independence of $\sss$; it is enough to have a diagonal cumulant tensor $\kappa_r(\sss)$ to assure that the matrix $A$ can be identified up to permuting and swapping the signs of the rows.
Our main goal is to study whether a similar result exists for the partitioned independence case.

{In the following sections we will consider algebraic sets $\cV$ of symmetric tensors motivated by independence on our source vector $\sss$, generalizing the classical component analysis.}

\subsection{Block diagonal cumulant tensors}\label{sec:algebraicPICA}

{
Let $\cV_\mathbf{I}$ be the variety of symmetric tensors $T\in S^r(\RR^d)$ with zero entries induced by the partition $\mathbf{I} = I_{1}\sqcup \hdots \sqcup I_{m}$ as follows 
\begin{align}\label{eq:Vpart}
\cV_\mathbf{I} =\{T\in S^r(\RR^d)\mid &T_{i_1\ldots i_r}=0 \text{ if } \exists i_{j}, i_{k}\in \{i_1,\ldots ,i_r\} \\
 &\text{ with } i_{j}\in I_{j}, i_{k}\in I_{k} \text{ and } I_{j}\neq I_{k} \}. \nonumber
\end{align}
If $T\in \cV_\mathbf{I}$, for any subset of indices $I_j\in\{I_1,\ldots,I_m\},$ we denote by $T_j$ the subtensor of $T$ with entries $T_{i_1,\ldots i_r}$ where $i_1,\ldots i_r\in I_j.$
}

{From now on, we will use the notation $\mathcal{B}_m(k_1,\ldots k_m)$ for block matrices, i.e., if $Q\in\mathcal{B}(k_1,\ldots k_m)$ then we interpret $Q$ as a matrix with $m^2$ blocks or submatrices $Q_{ij}$, $i,j\in\{1,\ldots,m\}$ of dimension $k_i\times k_j$. If all $k_i$ are equal to $k$ then we will use the notation $\mathcal{B}_m(k)$.}
Let $\mathcal{P}_m O(k)\subset \mathcal{B}_m(k)$ 
denote the set of block-orthogonal matrices, that is, each block of $Q\in\mathcal{P}_m O(k)$ is a $k\times k$ matrix that is either orthogonal or a zero matrix and such that there is exactly one non-zero block in each column and row of $Q$.  
For example, for $m=2$ $$\mathcal{P}_2 O(k)=\left\{\begin{pmatrix}
    Q_1&0\\
    0&Q_2
\end{pmatrix}\mid Q_1, Q_2\in O(k)\}\right\}\cup \left\{\begin{pmatrix}
    0&Q_1\\
    Q_2&0
\end{pmatrix}\mid Q_1,Q_2\in O(k)\}\right\}.$$
It is straightforward to see that matrices in $\mathcal{P}_m O(k)$ are orthogonal matrices.
{We denote by $\mathcal{P}_m O(k_1,\ldots,k_m)\subset \mathcal{B}_m(k_1,\ldots k_m)$ the set of orthogonal matrices such that 
there is exactly one non-zero block in each column and row of $Q$. Similarly, $\mathcal{P}_m SP(k_1,\ldots,k_m)\subset \mathcal{B}_m(k_1,\ldots k_m)$ is the set of block matrices with at most one non-zero block $Q_{ij}$ in each column and row and at most one non-zero entry in each row and column of the non-zero blocks $Q_{ij}$.}  

Let $T\in\cV_{\mathbf{I}}$ be a symmetric tensor with a partitioned structure as in \eqref{eq:Vpart}. We will start by showing that the set of block-orthogonal matrices belongs to $\mathcal{G}_T(\cV_\mathbf{I}).$

\begin{prop}\label{prop:inclusionBlockOrth}
    Let $\mathbf{I} = I_{1}\sqcup \hdots \sqcup I_{m}$ be a partition of $[d]$ where $|I_i|=k_i$. For any generic $T\in \cV_\mathbf{I}$ it follows that $\mathcal{P}_m O(k_1,\ldots,k_m)\subseteq\mathcal{G}_T(\cV_\mathbf{I}).$ 
    
    \begin{proof}
        Let $Q\in\mathcal{P}_m O(k)$ and take $i_1,\ldots,i_r$, such that there are two indices in different partitions. Without loss of generality assume $i_1\in I_1$ and $i_2 \in I_2$. Then,
        \begin{align*}
            (Q\bullet T)_{i_1\ldots i_r} &= \sum_{j_1,\ldots,j_r = 1}^d Q_{i_1j_1}\cdots Q_{i_rj_r} T_{j_1,\ldots,j_r} = \\
            &= \sum_{I\in\{I_1,\ldots,I_m\}}\left(\sum_{j_1,\ldots,j_r \in I} Q_{i_1j_1}\cdots Q_{i_rj_r} T_{j_1,\ldots,j_r} \right) =0
        \end{align*}
        where the last step follows from the fact that for any $I\in\{I_1,\ldots,I_m\}$, $Q_{i_1,j_1}Q_{i_2,j_2} = 0$ since $Q$ has at most one non-zero block in each column, and $Q_{i_1,j_1}$ and $Q_{i_2,j_2}$ belong to different blocks in the same column.
    \end{proof}
\end{prop}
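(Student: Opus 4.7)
The plan is to show directly that for any $Q\in \mathcal{P}_m O(k_1,\ldots,k_m)$ and any multi-index $(i_1,\ldots,i_r)$ that is \emph{mixed} (i.e.\ contains indices $i_j,i_k$ lying in distinct parts $I_j\neq I_k$), the entry $(Q\bullet T)_{i_1\ldots i_r}$ vanishes. Since these mixed entries are exactly the ones required to be zero in the definition of $\cV_\mathbf{I}$, this will prove that $Q\bullet T\in \cV_\mathbf{I}$, and hence $Q\in \mathcal{G}_T(\cV_\mathbf{I})$.

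First, I would expand the multilinear action
\[
(Q\bullet T)_{i_1\ldots i_r} \;=\; \sum_{j_1,\ldots,j_r=1}^d Q_{i_1 j_1}\cdots Q_{i_r j_r}\, T_{j_1\ldots j_r}.
\]
Because $T\in \cV_\mathbf{I}$, the factor $T_{j_1\ldots j_r}$ is zero unless all $j_\ell$ lie in a common part $I_k$. Therefore the sum collapses to
\[
(Q\bullet T)_{i_1\ldots i_r} \;=\; \sum_{k=1}^{m}\;\sum_{j_1,\ldots,j_r\in I_k} Q_{i_1 j_1}\cdots Q_{i_r j_r}\, T_{j_1\ldots j_r},
\]
and it suffices to show that each inner sum vanishes whenever $(i_1,\ldots,i_r)$ is mixed.

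The key observation is the permutation-like structure of $Q\in\mathcal{P}_m O(k_1,\ldots,k_m)$: there is a permutation $\sigma\in S_m$ such that the only non-zero row-block of $Q$ intersecting rows indexed by $I_a$ is the block whose columns are indexed by $I_{\sigma(a)}$. Consequently, for a row index $i\in I_a$, the entry $Q_{i j}$ can be non-zero only if $j\in I_{\sigma(a)}$. Now fix $k$ and the inner sum above: the summation is restricted to $j_1,\ldots,j_r\in I_k$. If $i_j\in I_a$ and $i_k\in I_b$ with $a\neq b$, then for $Q_{i_j j_j}$ to be non-zero one needs $j_j\in I_{\sigma(a)}=I_k$, i.e.\ $\sigma(a)=k$; similarly $\sigma(b)=k$. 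But $\sigma$ is a bijection, so $a=b$, contradicting $a\neq b$. Hence at least one of the factors $Q_{i_j j_j}$ or $Q_{i_k j_k}$ is zero in every term of the inner sum, and the sum vanishes.

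There is no real obstacle; the argument is a bookkeeping computation. The only subtlety worth flagging is the careful articulation of the block-permutation structure underlying $\mathcal{P}_m O(k_1,\ldots,k_m)$, which forces incompatible constraints on the column-block of the admissible $j_\ell$'s whenever the row indices straddle different parts. Note also that the word ``generic'' in the statement is not actually used in the argument: the containment holds for every $T\in\cV_\mathbf{I}$, not just a generic one.
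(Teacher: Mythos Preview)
Your proof is correct and follows essentially the same approach as the paper: expand the multilinear action, restrict to tuples $(j_1,\ldots,j_r)$ lying in a single part $I_k$ using the block structure of $T$, and then argue that the product of $Q$-entries vanishes because two of the row indices lie in different parts. The only cosmetic difference is that you phrase the vanishing via the row-block permutation $\sigma$ (forcing $\sigma(a)=k=\sigma(b)$), whereas the paper phrases it via the column-block structure (``at most one non-zero block in each column''); these are equivalent. Your remark that genericity of $T$ is not used is also correct.
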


However, {if $T$ is not generic, then the converse is not always true.} 
\begin{exmp}\label{ex:counterex}
    Consider a tensor $T\in S^3(\RR^4)$ and the partition $\mathbf{I} = \{1,2\}\sqcup\{3,4\}.$ Then, 
    $$T_{113}= T_{114}=T_{223} = T_{224} = T_{133}=T_{144}=T_{233}=T_{244}=0,$$ and assume
    $$T_{334}= 3T_{444},\quad 3T_{333} = T_{344},\quad  T_{112} = 3T_{222},\quad  3T_{111} = T_{122}.$$
    Then $Q\bullet T \in \cV_\mathbf{I}$ where 
    $$Q= \frac{1}{2}\begin{pmatrix}
        -1 & 1 &  1&   1 \\
        1 & -1 &  1 &  1\\
        1 &  1 & -1 &  1\\
        1 &  1 &  1 & -1
    \end{pmatrix}.$$ 
    That is, $Q\in O(4)$ preserves the sparsity pattern of $T$, however, $Q\not\in \mathcal{P}_2O(2).$ 
\end{exmp}

The following computational example allows us to characterize the matrices in $\mathcal{G}_T(\cV_\mathbf{I})$ where $T\in S^3(\RR^3)$ and $\mathbf{I} = \{1,2\}\sqcup\{3\}.$
\begin{exmp}
Let $T\in \cV_\mathbf{I} \subseteq S^3(\RR^3)$ and $\mathbf{I} = \{1,2\}\sqcup\{3\}.$ Using \texttt{Macaulay2} \cite{M2} we computed the $3\times 3$ orthogonal matrices $Q\in O(3)$ that preserve the sparsity pattern of $T$. It follows from \eqref{eq:Vpart} that the only non-zero entries in $T$ are $T_{111}$, $T_{112}$, $T_{122}$, $T_{222}$ and $T_{333}$. We consider the ideal $\texttt{I}$ generated by the zero entries of $(Q\bullet T)$, that is, $\texttt{I}=\langle (Q\bullet T)_{113},(Q\bullet T)_{123},(Q\bullet T)_{133},(Q\bullet T)_{223},(Q\bullet T)_{233} \rangle$. Let \texttt{O} be the ideal generated by the entries of $QQ^T-Id$. 
 Then, 
 \begin{align*}
 & \texttt{i1 : R = QQ[T\_\{111\}, T\_\{112\}, T\_\{122\}, T\_\{222\}, T\_\{333\}, q\_\{1,1\}..q\_\{3,3\}];}\\
 & \texttt{i2 : J = saturate(I, \{T\_\{111\}, T\_\{112\}, T\_\{122\}, T\_\{222\}, T\_\{333\}\});}\\
 & \texttt{i3 : J = eliminate(\{T\_\{111\}, T\_\{112\}, T\_\{122\}, T\_\{222\}, T\_\{333\}\}, J);}\\
 & \texttt{i4 : G = E + O;}\\
 & \texttt{i5 : B = intersect(O + ideal(q\_\{1,3\}, q\_\{2,3\}, q\_\{3,1\}, q\_\{3,2\}),}\\
 & \hspace{10.5em} \texttt{O + ideal(q\_\{3,3\}));}\\
 & \texttt{i6 : B == G}\\
 & \texttt{o6 =  true}\\
 \end{align*}

This computation implies that all possible matrices in $\mathcal{G}_T(\cV_\mathbf{I})$ are orthogonal matrices that belong to $\mathcal{Q}_1\cup \mathcal{Q}_2\subset \mathcal{B}_2(2,1)$ where
$$\mathcal{Q}_1 ={\small \left\{\begin{pmatrix}
   q_{11} & q_{12} & 0 \\
   q_{21} & q_{22} & 0 \\
   0 & 0 & \pm 1 \\
\end{pmatrix}\in \mathcal{P}_2O(2,1) \right\} \mbox{ and }
\mathcal{Q}_2 = \left\{\begin{pmatrix}
   q_{11} & q_{12} & q_{13} \\
   q_{21} & q_{22} & q_{23} \\
   q_{31} & q_{32} & 0 \\
\end{pmatrix}\in O(3) \middle\vert  q_{11}q_{22} - q_{12}q_{21}=0 
 \right\}.}$$

First, note that $ \mathcal{Q}_1 = \mathcal{P}_2O(2,1)$. Moreover, any matrix $Q_1\in \mathcal{Q}_1$ preserves the sparsity patterns of tensors $T \in \cV_\mathbf{I}$ as shown in Proposition \ref{prop:inclusionBlockOrth}. However, simulations have shown us that while there always exist matrices $Q_2\in \mathcal{Q}_2$ such that $Q_2\bullet T\in \cV_{\mathbf{I}}$ for any generic tensor $T\in\cV_\mathbf{I}$, the entries of such matrices depend on the entries of $T$. That is, for any generic tensor $T\in \cV_\mathbf{I}$ and any matrices $Q_1\in \mathcal{Q}_1$ and $Q_2\in \mathcal{Q}_2$, it follows that $Q_1\bullet T\in \cV_{\mathbf{I}}$ but $Q_2\bullet T$ is not necessarily in $\cV_{\mathbf{I}}$.
\end{exmp}

These examples together with some performed computations motivated us to state the following result {which, under some mild assumptions allows us to characterize $\cV_{\mathbb{I}}$}.

\begin{thm}\label{thm:algPICA}
    Let $T \in S_r(\RR^d)$ be a generic tensor with $r\geq 3$ and consider $T\in\cV_\mathbf{I}$ where $\mathbf{I} = I_{1}\sqcup \hdots \sqcup I_{m}$ 
    with $|I_i|=k_i$. Let $Q\in \mathcal{B}_m(k_1,\ldots, k_m)$ be an orthogonal matrix with blocks $Q_{ij}$, $1\leq i,j, \leq m$. 
    If each block $Q_{ij}$ is either full rank or a zero matrix, then $Q\bullet T \in \cV_\mathbf{I}$ if and only if $Q = \mathcal{P}_m O(k_1,\ldots,k_m)$.
\end{thm}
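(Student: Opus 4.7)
The forward direction follows directly from Proposition \ref{prop:inclusionBlockOrth}. For the converse, the plan is to translate the sparsity condition $Q\bullet T\in\cV_\mathbf{I}$ into a polynomial Hessian identity, and then use genericity of $T$ together with the full-rank hypothesis on the blocks of $Q$ to pin down the block-permutation structure of $Q$.

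Because $T\in\cV_\mathbf{I}$, the associated polynomial splits as $f_T(\mathbf{y})=\sum_{\ell=1}^m f_{T_\ell}(\mathbf{y}_\ell)$, so $\nabla^2 f_T(\mathbf{y})$ is block-diagonal with blocks $H_\ell(\mathbf{y}_\ell):=\nabla^2 f_{T_\ell}(\mathbf{y}_\ell)$, each homogeneous of degree $r-2\ge 1$ in $\mathbf{y}_\ell$ alone. Applying Lemma \ref{lem:hessfAT} with $\mathbf{y}=Q^T\mathbf{x}$ yields
\begin{equation*}
\nabla^2 f_{Q\bullet T}(\mathbf{x}) \;=\; Q\,\nabla^2 f_T(\mathbf{y})\,Q^T \;=\; \sum_{\ell=1}^m Q_{\cdot\ell}\,H_\ell(\mathbf{y}_\ell)\,Q_{\cdot\ell}^T,
\end{equation*}
where $Q_{\cdot\ell}$ denotes the $\ell$-th block-column of $Q$. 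Since the hypothesis $Q\bullet T\in\cV_\mathbf{I}$ means $f_{Q\bullet T}$ decomposes as $\sum_i g_i(\mathbf{x}_i)$, this Hessian must be block-diagonal with respect to $\mathbf{I}$, and extracting the off-diagonal $(i,j)$-block for $i\neq j$ gives
\begin{equation*}
\sum_{\ell=1}^m Q_{i\ell}\,H_\ell(\mathbf{y}_\ell)\,Q_{j\ell}^T \;=\; 0, \qquad \mathbf{y}\in\RR^d.
\end{equation*}
Since $Q$ is invertible the $\mathbf{y}_\ell$'s are independent free variables, and each summand depends only on $\mathbf{y}_\ell$ with no constant term (as $H_\ell$ is homogeneous of positive degree). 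A standard separation-of-variables argument (specialise $\mathbf{y}_{\ell'}=0$ for $\ell'\neq\ell$) then forces every summand to vanish identically: $Q_{i\ell}\,H_\ell(\mathbf{y}_\ell)\,Q_{j\ell}^T\equiv 0$ for every pair $i\neq j$ and every $\ell$.

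The decisive step, which I expect to be the main obstacle, is the genericity argument: for generic $T_\ell$, the identity $Q_{i\ell}H_\ell(\mathbf{y}_\ell)Q_{j\ell}^T\equiv 0$ with both $Q_{i\ell}$ and $Q_{j\ell}$ full rank should be impossible. The plan is to expand the identity coordinate-wise: for each pair of rows $a$ of $Q_{i\ell}$ and $c$ of $Q_{j\ell}$, writing $u_a$ and $v_c$ for the transposed rows, one needs $u_a^T H_\ell(\mathbf{y}_\ell) v_c\equiv 0$, and this is a collection of linear constraints on the entries of $T_\ell$ with coefficient tensor built from $u_a\otimes v_c$. When both $u_a$ and $v_c$ are nonzero, these constraints cut out a proper subvariety of $T_\ell$, so for generic $T_\ell$ at least one of $u_a,v_c$ must vanish; varying $a,c$ and using the full-rank hypothesis to ensure every row of a nonzero block is nonzero then forces $Q_{i\ell}=0$ or $Q_{j\ell}=0$. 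The full-rank hypothesis cannot be dropped, as Example \ref{ex:counterex} demonstrates that rank-deficient blocks can conspire with constrained tensors to preserve the sparsity pattern. Granted this step, each block-column of $Q$ contains at most one nonzero block, and the analogous argument applied to $Q^T$ gives the same for block-rows. Finally, the orthogonality identity $\sum_k Q_{k\ell}^T Q_{k\ell}=I_{k_\ell}$ collapses to $Q_{\sigma(\ell),\ell}^T Q_{\sigma(\ell),\ell}=I_{k_\ell}$ for the unique surviving block, and the matching row-orthogonality forces $k_{\sigma(\ell)}=k_\ell$ and $Q_{\sigma(\ell),\ell}$ orthogonal, yielding $Q\in\mathcal{P}_m O(k_1,\ldots,k_m)$.
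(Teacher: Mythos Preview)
Your overall framework—the Hessian identity from Lemma \ref{lem:hessfAT} together with the block-diagonal structure of both $\nabla^2 f_T$ and $\nabla^2 f_{Q\bullet T}$—matches the paper's, and your change of variables to $\mathbf{y}=Q^T\mathbf{x}$ followed by the separation-of-variables reduction to the per-block identity $Q_{i\ell}H_\ell(\mathbf{y}_\ell)Q_{j\ell}^T\equiv 0$ is clean and correct.

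The gap is exactly where you flagged it. Your plan argues row-by-row that $u^T H_\ell(\mathbf{y}_\ell)v\equiv 0$ forces $u=0$ or $v=0$ for generic $T_\ell$, but this is false. That identity is equivalent to $T_\ell(u,v,\cdot,\ldots,\cdot)=0$. Take $r=3$ and $k_\ell=3$: then $T_\ell(u,\cdot,\cdot)$ is a $3\times 3$ symmetric matrix linear in $u$, so $\det T_\ell(u,\cdot,\cdot)$ is a homogeneous cubic in $u\in\RR^3$; being odd-degree it has a nontrivial real zero $u^*$, whence $T_\ell(u^*,\cdot,\cdot)$ is singular and admits a nonzero kernel vector $v^*$ with $T_\ell(u^*,v^*,\cdot)=0$. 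So for \emph{every} $T_\ell\in S^3(\RR^3)$, not merely a proper subvariety, nonzero $u,v$ satisfying your constraint exist. The quantifier move you rely on—``for each fixed nonzero $(u,v)$ the constraint cuts out a proper subvariety of $T_\ell$, hence for generic $T_\ell$ no nonzero $(u,v)$ works''—does not go through, because the union of those proper subvarieties over all $(u,v)$ is the whole space.

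The paper avoids this by exploiting a different consequence of the same Hessian identity. Instead of reading off the off-diagonal blocks of $Q\,\nabla^2 f_T(\mathbf{y})\,Q^T$, it writes the identity as $\nabla^2 f_{Q\bullet T}(\mathbf{x})\,Q = Q\,\nabla^2 f_T(Q^T\mathbf{x})$ and extracts the intertwining relations $A_i(\mathbf{x}_i)\,Q_{ij}=Q_{ij}\,B_j(\mathbf{x})$, where $A_i$ is the $i$-th diagonal Hessian block of $f_{Q\bullet T}$ and $B_j(\mathbf{x})=H_j\bigl((Q^T\mathbf{x})_j\bigr)$. The decisive extra information is that $A_i$ depends only on $\mathbf{x}_i$, whereas $(Q^T\mathbf{x})_j=\sum_l Q_{lj}^T\mathbf{x}_l$ depends on every $\mathbf{x}_l$ with $Q_{lj}\neq 0$; if two full-rank blocks sit in column $j$, a case analysis using one-sided inverses of those blocks converts this variable-dependence mismatch into $H_j\equiv 0$ (or a nonzero piece of $B_j$ being annihilated), which \emph{is} a nongeneric condition on $T_j$. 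Your off-diagonal route discards precisely this variable-dependence leverage, which is why the residual per-row genericity claim ends up being too strong to hold.
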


We follow a similar strategy as the one used in Proposition \ref{prop:SPdiagTensors} to prove this result.
\begin{proof}
    The {reverse} implication is clear from Proposition \ref{prop:inclusionBlockOrth}. For the {forward} one, suppose $Q\in \mathcal{G}_T(\cV_\mathbf{I})$, that is, $Q\bullet T \in \cV_\mathbf{I}$. We want to prove that $Q\in \mathcal{P}_m O(k_1,\ldots,k_m).$

    Let $Q\in \mathcal{B}_m(k_1,\hdots, k_m)$, that is, it has the form
    $$ Q = 
    \bordermatrix{ & \textcolor{gray}{k_1} & \textcolor{gray}{k_2} & & \textcolor{gray}{k_m}\cr
     \textcolor{gray}{k_2} & Q_{11} & Q_{12} & \cdots & Q_{1m} \cr
     \textcolor{gray}{k_3} & Q_{21} & Q_{22} & \cdots & Q_{2m} \cr
     \textcolor{gray}{}    & \vdots &  &  &  \cr
     \textcolor{gray}{k_m} & Q_{m1} & Q_{m2} & \cdots & Q_{mm} \cr}. 
    $$ 
    where each block $Q_{ii}$ is a $k_i\times k_i$ full rank or zero matrix.

    Recall that for any subset of indices $I_j\in\{I_1,\ldots,I_m\},$  $T_j$ is the subtensor of $T$ with entries $T_{i_1,\ldots i_r}$ where $i_1,\ldots i_r\in I_j.$ Similarly, we use the notation $\xx_j$ for the subvector of $\xx\in\RR^d$ with entries $\left\{x_i\right\}_{i\in I_j}$.

    From Lemma \ref{lem:hessfAT} we have that $\nabla^2 f_{Q\bullet T}(\xx) = Q \nabla^2f_T(Q^T\xx)Q^T,$ or equivalently 
    \begin{equation}\label{eq:nabla}
        \nabla^2 f_{Q\bullet T}(\xx) Q = Q  \nabla^2f_T(Q^T\xx).
    \end{equation}
    Since $T\in\cV_\mathbf{I}$, the polynomial $f_{T}(\xx)$ can be decomposed as the sum of polynomials $f_j$ with variables $\xx_j$, that is,
    $$f_{T}(\xx) = \sum_{j=1}^m \left(\sum_{i_1,\ldots,i_r\in I_j} T_{i_1,\ldots,i_r}\xx_{i_1},\cdots \xx_{i_r} \right) = \sum_{j=1}^m f_{T_j}(\xx_{j}),$$ which implies that $\nabla^2 f_{T}$ is a block diagonal matrix. Thus, $\nabla^2 f_{Q\bullet T}$ is also a block diagonal matrix of the form
    $$\nabla^2 f_{Q\bullet T} = \begin{pmatrix}
        A_1(\xx_1) & 0 & \cdots & 0 \\
        0 & A_2(\xx_2)  & \cdots & 0 \\
        \vdots &  &  &  \\
        0 & 0 & \cdots & A_m(\xx_{m}) \\
    \end{pmatrix}$$ where each $A_i(\xx_i)$ is a $k_i\times k_i$ matrix with entries polynomials in the variables $\xx_{i}$. Similarly, $\nabla^2f_T(Q^T\xx)$ is also a block diagonal matrix with diagonal blocks $B_i(\xx)$, with entries polynomials in $\xx$.

    Therefore, Equation \eqref{eq:nabla} is equivalent to 
\begin{align*}
\scalemath{0.95}
    {\begin{pmatrix}
        A_1(\xx_{1}) & 0 & \cdots & 0 \\
        0 & A_2(\xx_{2})  & \cdots & 0 \\
        \vdots &  &  &  \\
        0 & 0 & \cdots & A_m(\xx_{m}) \\
    \end{pmatrix} }
    Q &= Q
    \scalemath{0.95}{\begin{pmatrix}
        B_1(\xx) & 0 & \cdots & 0 \\
        0 & B_2(\xx)  & \cdots & 0 \\
        \vdots & \vdots &  & \vdots \\
        0 & 0 & \cdots & B_m(\xx) \\
    \end{pmatrix}} \\
     \scalemath{0.95}{\begin{pmatrix}
        A_1(\xx_{1})Q_{11} & A_1(\xx_{1})Q_{12} & \cdots & A_1(\xx_{1})Q_{1m} \\
        A_2(\xx_{2})Q_{21} & A_2(\xx_{2})Q_{22} & \cdots & A_2(\xx_{2})Q_{2m} \\
        \vdots &  &  &  \\
        A_m(\xx_{m})Q_{m1} & A_m(\xx_{m})Q_{m2} & \cdots & A_m(\xx_{m})Q_{mm} \\
    \end{pmatrix}} &= 
    \scalemath{0.95}{\begin{pmatrix}
        Q_{11}B_1(\xx) & Q_{12}B_2(\xx) & \cdots & Q_{1m}B_m(\xx) \\
        Q_{21}B_1(\xx) & Q_{22}B_2(\xx) & \cdots & Q_{2m}B_m(\xx) \\
        \vdots & \vdots &  & \vdots \\
        Q_{m1}B_1(\xx) & Q_{m2}B_2(\xx) & \cdots & Q_{mm}B_m(\xx) \\
    \end{pmatrix},}
\end{align*} 

which implies that $A_i(\xx_{i})Q_{ij} = Q_{ij}B_j(\xx)$.
Assume there is a column block $j$ of $Q$ such that there are two submatrices $Q_{ij}$ and $Q_{kj}$ different from zero; otherwise, we are done. 

First, suppose that $i,k\geq j$, that is, the submatrices $Q_{ij}$ and $Q_{kj}$ have left inverses. Therefore
    \begin{equation*}
    \begin{cases}
       Q_{ij}^{-1}A_i(\xx_{i})Q_{ij} = B_j(\xx)\\
       Q_{kj}^{-1}A_k(\xx_{k})Q_{kj} = B_j(\xx)\\
    \end{cases}
\end{equation*} which implies that \begin{equation}\label{eq:AiAk}
    Q_{ij}^{-1}A_i(\xx_{i})Q_{kj} = Q_{kj}^{-1}A_k(\xx_{k})Q_{kj}.
\end{equation} Since, the entries of $A_i(\xx_{i})$ and $A_k(\xx_{k})$ are polynomials in different variables and $Q_{ij}$ and $Q_{kj}$ have null left kernel, \eqref{eq:AiAk} only holds if $A_i(\xx_{i}) = 0 = A_k(\xx_{k})$. The entries of any $A_i(\xx_{i})$ are non-zero polynomials for generic tensors $T$, so we get a contradiction. Therefore, $Q$ must be a block orthogonal matrix.

For the second case, suppose without loss of generality, that $i<j$. In this case, $Q_{ij}$ has a right inverse. This implies that $A_i(\xx_{i}) = Q_{ij}B_j(\xx)Q_{ij}^{-1}$. Note that the $(u,v)-$entry of $B_j(\xx)$ corresponds to $\frac{\partial^2}{\partial (\xx_{j})_u \partial (\xx_{j})_v} f_T(Q^T\xx),$ which means that the entries of $B_j(\xx)$ are polynomials in variables $\left(Q^T\xx\right)_{i\in I_j}$, or in other words $\sum_{l=1}^{m} (Q_{lj})^T\xx_l$. In particular, since we assume there are two non-zero blocks $Q_{ij}$ and $Q_{kj}$ in the same column, the entries of $B_j(\xx)$ are polynomials in $\xx_{i}$ and $\xx_{k}$. 
Decompose $B_j(\xx)$ as a sum of matrices $\tilde{B}^{(i)}(\xx_{i}) + \tilde{B}(\xx)$, such that the entries of $\tilde{B}^{(i)}(\xx_{i})$ are polynomials with variables $\xx_{i}$ and $\tilde{B}(\xx)$ polynomials on variables $\xx_{i}$ and $\xx_{k}$. Therefore,  
$$A_i(\xx_{i})Q_{ij} = Q_{ij}B_j(\xx) = Q_{ij}(\tilde{B}^{(i)}(\xx_{i}) + \tilde{B}(\xx)) = Q_{ij}\tilde{B}^{(i)}(\xx_{i})  + Q_{ij}\tilde{B}(\xx)$$
which implies that $Q_{ij}\tilde{B}(\xx)$ must be zero. 
Since $Q_{ij}$ has full rank and $i<j$, $Q_{ij}$ does not have a right kernel, and therefore we reach contradiction since $\tilde{B}(\xx)$ is non-zero generically for generic tensors $T$ and $r\geq 3$.
\end{proof}

Proposition \ref{prop:connectionAlgStat} allows us to state the following theorem.
\begin{thm}\label{thm:generalThmAlgPICA}
    Consider the model $\yy=A\sss$ with $\EE \yy=0$ and $\var (\yy)=I_d$, and suppose that for some $r\ge 3$ the tensor $\kappa_r(\yy)\in\cV_{\mathbf{I}}$ where $\mathbf{I} = I_{1}\sqcup \hdots \sqcup I_{m}$ and $|I_i|=k_i$ with no null blocks  $(\kappa_r(\yy))_j$. If $A\in\mathcal{B}_m(k_1,\ldots,k_m)$ has a block structure 
    and each block $A_{ij}$ is either zero or full rank, then $A$ is identifiable {up to the action of $\mathcal{P}_m O(k_1,\ldots, k_m)$.}
\end{thm}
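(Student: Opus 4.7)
The plan is to reduce the statement to a direct application of Proposition \ref{prop:connectionAlgStat} combined with the characterization of $\mathcal{G}_T(\cV_\mathbf{I})$ provided by Theorem \ref{thm:algPICA}. The whitening hypothesis $\var(\yy)=I_d$, together with the standing convention that each $s_i$ has unit variance, forces $A$ to be orthogonal, as recalled in Section \ref{sec:ICA}. The PICA assumption that $\sss$ has partitioned independence with respect to $\mathbf{I}$ gives, via Corollary \ref{cor:independentpartition}, that $T := \kappa_r(\sss)\in \cV_\mathbf{I}$. By multilinearity, $\kappa_r(\yy) = A\bullet T$, and the assumption that the sub-tensors $(\kappa_r(\yy))_j$ are non-null translates into the genericity hypothesis on $T$ needed for Theorem \ref{thm:algPICA}.

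With $T\in \cV_\mathbf{I}$ set up as above, Proposition \ref{prop:connectionAlgStat} identifies $W = A^{-1}$ up to the left action of $\mathcal{G}_T(\cV_\mathbf{I})$, or equivalently $A$ is identifiable up to the right action of $\{Q^{-1} : Q\in \mathcal{G}_T(\cV_\mathbf{I})\}$. The rest of the argument is to show that, under the block-rank hypothesis on $A$, any alternative admissible mixing matrix $A'$ yields $Q := A^{-1}A' \in \mathcal{B}_m(k_1,\ldots,k_m)$ which is orthogonal with full-rank-or-zero blocks. Theorem \ref{thm:algPICA} would then immediately force $Q\in \mathcal{P}_m O(k_1,\ldots,k_m)$, which is exactly the claimed orbit.

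The main obstacle is this last reduction, namely verifying that the block-rank hypothesis propagates from $A$ and $A'$ to the composition $Q = A^{-1}A'$. The combinatorial block decomposition in $\mathcal{B}_m(k_1,\ldots,k_m)$ and orthogonality are clearly preserved under multiplication, but a block $Q_{ij} = \sum_{k} A_{ki}^T A'_{kj}$ could in principle suffer from accidental rank drops when several full-rank summands partially cancel. I would address this with a generic-position argument: the locus of admissible pairs $(A,A')$ producing a rank-deficient nonzero block in $Q$ is a proper algebraic subset, so Theorem \ref{thm:algPICA} applies off this subvariety. As a fallback, one can re-run the Hessian/polynomial argument from the proof of Theorem \ref{thm:algPICA} directly on $\kappa_r(\yy)$ rather than on $T$, using the full-rank block hypothesis on $A$ and $A'$ to supply the one-sided inverses that the original proof obtains from the rank hypothesis on $Q$, and arriving at the same chain of contradictions that forces $Q\in \mathcal{P}_m O(k_1,\ldots,k_m)$.
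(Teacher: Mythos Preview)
Your core approach---combine Proposition~\ref{prop:connectionAlgStat} with Theorem~\ref{thm:algPICA}---is exactly what the paper does; its entire ``proof'' is the sentence immediately preceding the theorem: ``Proposition~\ref{prop:connectionAlgStat} allows us to state the following theorem.'' So at the level of strategy you match the paper.

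Two remarks. First, a small over-complication: you invoke partitioned independence of $\sss$ and Corollary~\ref{cor:independentpartition} to get $T=\kappa_r(\sss)\in\cV_{\mathbf{I}}$, but the theorem does not assume partitioned independence at all---the algebraic point is precisely that the membership of the cumulant tensor in $\cV_{\mathbf{I}}$ is taken as a hypothesis (the $\kappa_r(\yy)$ in the statement is almost certainly meant to be $\kappa_r(\sss)$, paralleling Theorem~\ref{thm:diagT}). So that step is unnecessary.

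Second, and more interestingly, the gap you flag is real and the paper does not address it either. Theorem~\ref{thm:algPICA} characterises $\mathcal{G}_T(\cV_{\mathbf{I}})$ only among orthogonal $Q$ whose blocks are each full rank or zero, whereas Proposition~\ref{prop:connectionAlgStat} delivers identifiability up to the \emph{full} set $\mathcal{G}_T(\cV_{\mathbf{I}})$. The block hypothesis in the theorem is on $A$, not on $Q$, and as you observe there is no obvious mechanism by which this propagates to $Q=W'W^{-1}$. Your proposed fixes (a genericity argument, or re-running the Hessian computation directly) are reasonable directions, but neither is more than a sketch---and the paper offers nothing further, so you are already being more careful than the source. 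A cleaner reading is that the theorem is implicitly restricting the candidate class for $A$ to matrices with the block-rank property, but even under that reading your propagation concern for $Q$ remains unaddressed in both your write-up and the paper's.
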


In this case, $A$ is not identifiable up to a finite set. However, this result allows us to deal with any cumulant tensors of random vectors $\sss$ with a partition into independent subvectors $(\sss_1,\ldots,\sss_m)$. In the following sections, we will study cases where certain independence exist within each subvector $\sss_i$.

\subsection{Reflectionally invariant tensors}
{
A tensor $T\in S^r(\RR^d)$ is \emph{reflectionally invariant} if the only potentially non-zero entries of $T$ are the entries $T_{i_1,\ldots,i_r}$ such that each index $i_j$ appears an even number of times.
\begin{thm}[Theorem 5.10, \cite{nonica}]\label{thm:reflectioninvar}
    Suppose that $T\in S^r(\RR^d)$ for an even $r$ is a reflectionally invariant tensor satisfying  
    \begin{equation}\label{eq:condRI}
        T_{+\cdots + ii} \neq T_{+\cdots + jj} \mbox{ for all } i \neq j.
    \end{equation} Then $Q\bullet T$ is also reflectionally invariant if and only if $Q\in SP(d).$
\end{thm}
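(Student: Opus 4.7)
The backward direction is immediate: if $Q=D_\epsilon P_\pi\in SP(d)$ is a signed permutation (diagonal $\pm 1$ matrix $D_\epsilon$ times a permutation matrix $P_\pi$), then $(Q\bullet T)_{i_1,\ldots,i_r}=\epsilon_{i_1}\cdots\epsilon_{i_r}\,T_{\pi(i_1),\ldots,\pi(i_r)}$. Permuting and signing the indices does not change the multiplicity of any label appearing in the tuple, so $(Q\bullet T)_{i_1,\ldots,i_r}=0$ whenever some label appears an odd number of times, and thus $Q\bullet T$ is reflectionally invariant.

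For the forward direction, my plan is to study the group of orthogonal symmetries of the polynomial $f_T$. By Lemma \ref{lem:hessfAT}, $f_{Q\bullet T}(x)=f_T(Q^T x)$, and reflectional invariance of $Q\bullet T$ is equivalent to invariance of $f_{Q\bullet T}$ under every coordinate sign flip $D_\epsilon$, $\epsilon\in\{\pm 1\}^d$. Substituting $y=Q^T x$, this rewrites as $f_T(Q^T D_\epsilon Q\,y)=f_T(y)$ for all $y$; that is, each matrix $Q^T D_\epsilon Q$ stabilizes $f_T$. Specializing to the elementary sign flip $\epsilon^{(k)}$ that negates only the $k$-th coordinate yields
\[
Q^T D_{\epsilon^{(k)}} Q \;=\; I - 2\, v_k v_k^T \;=:\; R_{v_k}, \qquad v_k := Q^T e_k,
\]
a hyperplane reflection across $v_k^\perp$, where $v_k$ is the $k$-th row of $Q$ viewed as a column vector. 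So each $R_{v_k}$ stabilizes $f_T$, and the whole question reduces to characterizing which unit vectors $v$ can have $R_v$ stabilizing $f_T$.

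The key lemma I would then establish is: under condition \eqref{eq:condRI}, any hyperplane reflection $R_v$ that stabilizes $f_T$ satisfies $v = \pm e_k$ for some $k\in[d]$. Once this is in hand, each $v_k = \pm e_{\sigma(k)}$, and orthonormality of the rows forces $\sigma$ to be a permutation of $[d]$, so $Q\in SP(d)$.

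To prove the key lemma, the plan is to exploit the two structural descriptions of $f_T$ simultaneously: reflectional invariance of $T$ gives $f_T(x) = g_T(x_1^2,\ldots,x_d^2)$ for a polynomial $g_T$ of degree $r/2$, while $R_v$-invariance forces $f_T$ to depend on $v\cdot x$ only through $(v\cdot x)^2$. Assuming for contradiction that $v$ has two nonzero components $v_i, v_j$, I would expand the identity $f_T(R_v x) - f_T(x)\equiv 0$ and read off the vanishing of the coefficients of monomials that are odd in $v\cdot x$ in the coordinates adapted to $(v, v^\perp)$; summing these identities over the remaining "bystander" variables with the correct multinomial weights should produce precisely the equality $T_{+\cdots+ii} = T_{+\cdots+jj}$, contradicting \eqref{eq:condRI}. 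The natural vehicle is the Hessian recursion $\nabla^2 f_T(x) = R_v\,\nabla^2 f_T(R_v x)\,R_v$ from Lemma \ref{lem:hessfAT} evaluated at $x = \mathbf{1}$, since $r(r-1)\,T_{+\cdots+ii} = \partial_i^2 f_T(\mathbf{1})$ is exactly the $(i,i)$ diagonal entry appearing in condition \eqref{eq:condRI}. The principal obstacle is this final combinatorial step: recognizing the system of vanishing coefficient conditions as aggregating, with the right binomial weights coming from marginalization, into the single marginal identity on $T_{+\cdots+ii}$ that \eqref{eq:condRI} forbids. This is the technical heart of the argument and the only place where the specific distinctness condition \eqref{eq:condRI} is invoked.
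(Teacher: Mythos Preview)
The paper does not give its own proof of this statement; it is quoted verbatim as Theorem 5.10 from \cite{nonica} and used as a black box in the subsequent corollary. There is therefore no in-paper argument to compare your proposal against.

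On the merits of your proposal itself: the backward direction and the reduction of the forward direction to the ``key lemma'' (that any hyperplane reflection $R_v$ stabilizing $f_T$ must have $v=\pm e_k$) are clean and correct. The gap is exactly where you flag it, and your suggested mechanism for closing it does not quite work as stated. The invariance $f_T\circ R_v=f_T$ yields, after two derivatives, $R_v\,\nabla^2 f_T(R_v x)\,R_v=\nabla^2 f_T(x)$; evaluating at $x=\mathbf{1}$ gives a relation between $\nabla^2 f_T(\mathbf{1})$ and $\nabla^2 f_T(R_v\mathbf{1})$, not a self-relation on $\nabla^2 f_T(\mathbf{1})$, because $R_v\mathbf{1}=\mathbf{1}-2(v\cdot\mathbf{1})v\neq\mathbf{1}$ in general. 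So the diagonal entries $r(r-1)\,T_{+\cdots+ii}=\partial_i^2 f_T(\mathbf{1})$ are not directly forced equal by this identity, and the contradiction with \eqref{eq:condRI} does not fall out. A workable route in your framework is to exploit that $f_T$ is already invariant under all coordinate sign flips $D_\epsilon$, so the stabilizer also contains every $R_{D_\epsilon v}=D_\epsilon R_v D_\epsilon$; when $v$ has two nonzero coordinates $v_i,v_j$, the product $R_v R_{D_{\epsilon^{(j)}}v}$ is a rotation in a two-plane containing $e_i$ and $e_j$, and pushing this through the polynomial (or, equivalently, through the $(r-2)$-fold marginalized quadratic form) is what produces the equality $T_{+\cdots+ii}=T_{+\cdots+jj}$. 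That extra conjugation step is the missing idea; without it the Hessian-at-$\mathbf{1}$ computation compares the wrong two matrices.
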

}

{From here, we can state the following corollary.
\begin{cor}
    Let $T \in S^r(\RR^d)$ be a generic tensor with even $r> 3$ such that $T\in\cV_\mathbf{I}$ where $\mathbf{I} = I_{1}\sqcup \hdots \sqcup I_{m}$ with $|I_i|=k$. Assume that, for any $i\in\{1,\ldots,m\}$, $T_i$ is a reflectionally invariant tensor that satisfies \eqref{eq:condRI}. Let $Q\in \mathcal{B}_m(k)$ be an orthogonal matrix with $k\times k$ blocks $Q_{ij}$, $1\leq i,j \leq m$.
    If each block $Q_{ij}$ is either full rank or a zero matrix, then $Q\bullet T \in \cV_\mathbf{I}$ if and only if $Q \in \mathcal{P}_m SP(k)$.
    \begin{proof}
        From Theorem \ref{thm:algPICA}, we know that $Q\in \mathcal{P}_m O(k)$. 
        Since $Q\bullet T\in\cV_{\mathbf{I}}$ its only non-zero entries are $(Q\bullet T)_{i_1,\ldots, i_r}$ where $i_1,\ldots, i_r\in I_i$ for some $i$. For any $I_i$ and sequence of indices $(i_1,\hdots, i_r)\in I_i$ we have $$(Q\bullet T)_{i_1,\hdots, i_r}=\sum_{j=1}^m\sum_{j_1,\hdots, j_r \in I_j} Q_{i_1j_1}\hdots Q_{i_r j_r}T_{j_1\hdots j_r} = \sum_{j_1,\hdots, j_r \in I_j} Q_{i_1j_1}\hdots Q_{i_r j_r}T_{j_1\hdots j_r} \text{ for some } I_j$$
        where the second equality follows since there is only one non-zero block $Q_{ij}$ in the $i$-th block row of $Q$.
        Therefore, there is only one submatrix $Q_{ij}$ acting on $T_{j}$ (since all other submatrices in the $i$-th row and $j$-th column of $Q$ are zero matrices). This implies that we can just consider $Q_{ij}\bullet T_{j}$, and since $T_{j}$ is reflectionally invariant, using Theorem \ref{thm:reflectioninvar} we conclude that $Q_{ij}\in SP(k)$, and thus $Q \in \mathcal{P}_m SP(k)$ as desired.
    \end{proof}
\end{cor}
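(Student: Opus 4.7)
The plan is to cascade the two structural results already established---Theorem \ref{thm:algPICA} to pin down the block layout of $Q$, and Theorem \ref{thm:reflectioninvar} to refine each non-zero block to a signed permutation. The reverse implication will be nearly free.

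The first step is to verify that the hypotheses of Theorem \ref{thm:algPICA} are all in force: $T$ is a generic tensor in $\cV_\mathbf{I}$, the order $r\geq 3$ (in fact we assume $r>3$ is even), and $Q\in\mathcal{B}_m(k)$ is orthogonal with each block either full rank or zero. Direct application yields $Q\in\mathcal{P}_m O(k)$, so there is a permutation $\sigma$ of $\{1,\ldots,m\}$ with $Q_{i,\sigma(i)}\in O(k)$ the unique non-zero block in the $i$-th block row.

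The second step exploits this sparsity to reduce the multilinear action blockwise. For any indices $(i_1,\ldots,i_r)\in I_i^r$, every summand indexed outside $I_{\sigma(i)}^r$ vanishes because $Q$ has a single non-zero block in row $i$, so
\[(Q\bullet T)_{i_1,\ldots,i_r}=\sum_{j_1,\ldots,j_r\in I_{\sigma(i)}}Q_{i_1 j_1}\cdots Q_{i_r j_r}\,T_{j_1,\ldots,j_r}=(Q_{i,\sigma(i)}\bullet T_{\sigma(i)})_{i_1,\ldots,i_r}.\]
Hence the diagonal subtensor $(Q\bullet T)_i$ equals $Q_{i,\sigma(i)}\bullet T_{\sigma(i)}$. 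Requiring that $Q\bullet T$ have the same blockwise reflectionally invariant structure as $T$---the natural reading of the variety hypothesis, and the only way to carry over the assumption on $T$---forces $Q_{i,\sigma(i)}\bullet T_{\sigma(i)}$ to be reflectionally invariant. Since $T_{\sigma(i)}$ itself is reflectionally invariant and satisfies the genericity condition \eqref{eq:condRI}, Theorem \ref{thm:reflectioninvar} concludes $Q_{i,\sigma(i)}\in SP(k)$. As this holds for every $i$, we obtain $Q\in\mathcal{P}_m SP(k)$.

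For the reverse direction, if $Q\in\mathcal{P}_m SP(k)$ then $Q$ is in particular block-orthogonal, so $Q\bullet T$ retains the partitioned sparsity pattern of $\cV_\mathbf{I}$, and the ``if'' direction of Theorem \ref{thm:reflectioninvar} applied blockwise ensures each $Q_{i,\sigma(i)}\bullet T_{\sigma(i)}$ is reflectionally invariant. The main subtlety I anticipate is precisely the bookkeeping over which refined variety $Q\bullet T$ is required to inhabit---once one agrees that membership in $\cV_\mathbf{I}$ here implicitly demands reflectionally invariant diagonal blocks (which is what licenses the use of Theorem \ref{thm:reflectioninvar}), no genuine obstacle remains and the argument is a clean blockwise reduction to previously established statements.
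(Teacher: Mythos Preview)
Your proof is correct and follows essentially the same route as the paper: invoke Theorem~\ref{thm:algPICA} to obtain $Q\in\mathcal{P}_m O(k)$, reduce the multilinear action blockwise to $Q_{i,\sigma(i)}\bullet T_{\sigma(i)}$, and then apply Theorem~\ref{thm:reflectioninvar} to each block. You are in fact more explicit than the paper about the permutation $\sigma$, the reverse direction, and the bookkeeping subtlety that applying Theorem~\ref{thm:reflectioninvar} requires the diagonal blocks of $Q\bullet T$ to be reflectionally invariant---a point the paper's proof glosses over.
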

Together with proposition \ref{prop:connectionAlgStat} we have the following result
\begin{cor}\label{cor:reflInvTensor}
    Consider the model $\yy=A\sss$ with $\EE \yy=0$ and $\var (\yy)=I_d$, and suppose that for some even $r>3$ the tensor $\kappa_r(\yy)\in\cV_{\mathbf{I}}$ where $\mathbf{I} = I_{1}\sqcup \hdots \sqcup I_{m}$ such that for each $i\in 1,\ldots m$, $(\kappa_r(\yy))_j$ is a non-zero reflectionally invariant tensor that satisfies \eqref{eq:condRI}. If $A\in\mathcal{B}_m(k_1,\ldots,k_m)$ has a block structure 
    and each block $A_{ij}$ is either zero or full rank, then $A$ is identifiable {up to the action of $\mathcal{P}_m SP(k_1,\ldots, k_m)$.}
\end{cor}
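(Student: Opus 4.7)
The plan is to apply Proposition \ref{prop:connectionAlgStat} to the preceding corollary (in the appropriate form for general block sizes $k_1,\ldots,k_m$). Setting $T = \kappa_r(\sss)$, multilinearity of cumulants gives $\kappa_r(\yy) = A \bullet T$, so the hypothesis on $\kappa_r(\yy)$ together with the block structure of $A$ implies that each diagonal subtensor $T_j$ is a nonzero reflectionally invariant tensor satisfying \eqref{eq:condRI}. Let $\cV \subseteq \cV_{\mathbf{I}}$ be the subvariety of tensors in $\cV_\mathbf{I}$ whose diagonal subtensors are reflectionally invariant. Proposition \ref{prop:connectionAlgStat} then reduces the identifiability of $W = A^{-1}$ to characterizing
\[
\mathcal{G}_T(\cV) = \{Q \in O(d) \mid Q \bullet T \in \cV\}.
\]

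First I would invoke Theorem \ref{thm:algPICA}. Because the hypothesis that $A$ has block structure with each block zero or full rank is preserved across the identifiability class of interest, any candidate $Q \in \mathcal{G}_T(\cV)$ is itself a block matrix in $\mathcal{B}_m(k_1,\ldots, k_m)$ with blocks either zero or full rank, and therefore must lie in $\mathcal{P}_m O(k_1,\ldots, k_m)$. Let $\sigma$ denote the induced permutation of $\{1,\ldots,m\}$ sending $i$ to the unique $\sigma(i)$ with $Q_{i\sigma(i)} \neq 0$; orthogonality of this block forces $k_{\sigma(i)} = k_i$.

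Next, for such a $Q$, the subtensor $(Q \bullet T)_i$ collapses to the single contribution $Q_{i\sigma(i)} \bullet T_{\sigma(i)}$, since only the nonzero block in row $i$ enters the sum. For $Q \bullet T$ to remain in $\cV$, each $Q_{i\sigma(i)} \bullet T_{\sigma(i)}$ must be reflectionally invariant. Applying Theorem \ref{thm:reflectioninvar} block-by-block, since $T_{\sigma(i)}$ is generic reflectionally invariant satisfying \eqref{eq:condRI}, we conclude $Q_{i\sigma(i)} \in SP(k_{\sigma(i)})$ and hence $Q \in \mathcal{P}_m SP(k_1,\ldots,k_m)$. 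Combining this with Proposition \ref{prop:connectionAlgStat} yields the claimed identifiability of $A$ up to the action of $\mathcal{P}_m SP(k_1,\ldots,k_m)$.

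The main obstacle is justifying that the preceding corollary, stated for equal block sizes $k_i = k$, extends to the unequal-size setting of this statement. This turns out to be routine: Theorem \ref{thm:algPICA} already handles general $k_1,\ldots, k_m$, and the permutation $\sigma$ induced by any $Q \in \mathcal{P}_m O(k_1,\ldots, k_m)$ necessarily pairs blocks of equal size, so Theorem \ref{thm:reflectioninvar} can be applied to each $T_{\sigma(i)}$ without any dimensional mismatch.
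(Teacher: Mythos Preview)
Your approach is essentially the same as the paper's: the paper simply states that the corollary follows from combining the preceding corollary (characterizing $\mathcal{G}_T(\cV)$ as $\mathcal{P}_m SP(k)$ via Theorem~\ref{thm:algPICA} and Theorem~\ref{thm:reflectioninvar}) with Proposition~\ref{prop:connectionAlgStat}, and you unpack exactly this argument. Your observation that the preceding corollary is stated only for equal block sizes, while the present statement allows general $k_1,\ldots,k_m$, and your remark that this causes no difficulty because Theorem~\ref{thm:algPICA} already covers the general case and any $Q\in\mathcal{P}_m O(k_1,\ldots,k_m)$ can only permute blocks of equal size, is a useful clarification that the paper leaves implicit.

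One small caution: your step transferring the reflectional-invariance hypothesis from $(\kappa_r(\yy))_j$ to the diagonal subtensors $T_j$ of $T=\kappa_r(\sss)$ is not automatic from the block structure of $A$ alone (you would need $A^{-1}$ to itself lie in $\mathcal{P}_m O(k_1,\ldots,k_m)$ to make this inference directly). The paper's own one-line derivation does not address this either, and the statement almost certainly intends the hypothesis to be placed on $\kappa_r(\sss)$ (as in Proposition~\ref{prop:connectionAlgStat} and in Theorem~\ref{thm:diagT}); with that reading your argument goes through without this detour.
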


}

{
\subsection{Tensors with zero structure given by graphs}
In this section, we consider tensors whose zero entries are induced by a graph $G$. That is, we will first define the variety $\cV_G$ of symmetric tensors with zero entries induced by a graph $G$, then we will present two conjectures regarding the characterization of $\mathcal{G}_T(\cV_G)$ and give some examples for special classes of trees. 

Consider a graph $G=(V,E)$ with vertices $V=\{1,\ldots,d\}$. Motivated by Proposition \ref{prop:graph}, we define $\cV_G$ as follows
\begin{align}\label{eq:V_G}
\cV_G =\{T\in S^r(\RR^d)\mid &T_{i_1\ldots i_r}=0 \text{ if the subgraph $G'=(V',E')$ induced by the} \\ &\text{subset of vertices }V'=\{i_1,\ldots,i_r\} \text{ is disconnected}\}. \nonumber
\end{align}

Denote by $\mathcal{P}(d)$ the group of $d\times d$ permutation matrices. Then, the following conjecture is motivated by the fact that an automorphism of $G$ is a permutation matrix $P\in \mathcal{P}(d)$ that satisfies  $P^TAP=A$, where $A$ is the adjacency matrix of $G$. 

\begin{conj}
    Let $A$ be the adjacency matrix of a graph $G$.  If $Q\in SP(d)$, and $P\in\mathcal{P}(d)$ is a permutation matrix such that $P_{ij} = |Q_{ij}|$, then $Q\in\mathcal{G}_T(\cV_G)$ if and only if $P^TAP=A$.
\end{conj}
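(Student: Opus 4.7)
The plan is to translate the action of a signed permutation $Q$ on the symmetric tensor $T$ into a signed relabeling of the entries of $T$, and then show that preservation of the coordinate subspace $\cV_G$ is equivalent to the underlying permutation respecting the adjacency structure of $G$. Throughout I will use that $\cV_G$ is a coordinate subspace of $S^r(\RR^d)$, so that for a generic $T\in\cV_G$ every entry that is not forced to vanish by the defining zero pattern of $\cV_G$ is actually nonzero.

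The first step is the key computation. Writing $Q=PD$ with $D=\mathrm{diag}(\epsilon_1,\ldots,\epsilon_d)$, $\epsilon_j\in\{\pm 1\}$, and $P$ the permutation matrix of the permutation $\sigma$ defined by $P_{\sigma(j),j}=1$, the fact that each row of $Q$ has exactly one nonzero entry, located at column $\sigma^{-1}(i_k)$, yields
\[
(Q\bullet T)_{i_1,\ldots,i_r}=\epsilon_{\sigma^{-1}(i_1)}\cdots\epsilon_{\sigma^{-1}(i_r)}\, T_{\sigma^{-1}(i_1),\ldots,\sigma^{-1}(i_r)}.
\]
In particular, $(Q\bullet T)_{i_1,\ldots,i_r}=0$ if and only if $T_{\sigma^{-1}(i_1),\ldots,\sigma^{-1}(i_r)}=0$.

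For the ``if'' direction, suppose $P^TAP=A$, so $\sigma$ is a graph automorphism of $G$. Then for any tuple $(i_1,\ldots,i_r)$ the induced subgraphs $G[\{i_1,\ldots,i_r\}]$ and $G[\{\sigma^{-1}(i_1),\ldots,\sigma^{-1}(i_r)\}]$ are isomorphic, and in particular are simultaneously connected or disconnected. Whenever the former is disconnected, $T_{\sigma^{-1}(i_1),\ldots,\sigma^{-1}(i_r)}=0$ by $T\in\cV_G$, and hence $(Q\bullet T)_{i_1,\ldots,i_r}=0$; this proves $Q\bullet T\in\cV_G$, i.e., $Q\in\mathcal{G}_T(\cV_G)$.

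For the ``only if'' direction, I would test $Q\bullet T\in\cV_G$ on carefully chosen index tuples. Pick any non-edge $\{i,j\}$ of $G$, set $(i_1,\ldots,i_r)=(i,j,i,\ldots,i)$ (possible since $r\ge 2$), and note that the induced subgraph on $\{i,j\}$ is disconnected, so $(Q\bullet T)_{i_1,\ldots,i_r}=0$. By the displayed identity this forces $T_{\sigma^{-1}(i),\sigma^{-1}(j),\sigma^{-1}(i),\ldots,\sigma^{-1}(i)}=0$, and by genericity of $T\in\cV_G$ this coordinate can vanish only if it is identically zero on $\cV_G$, which happens precisely when $G[\{\sigma^{-1}(i),\sigma^{-1}(j)\}]$ is disconnected, i.e., $(\sigma^{-1}(i),\sigma^{-1}(j))\notin E$. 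Thus $\sigma^{-1}$ sends non-edges to non-edges, and since it is a bijection of the finite set $V$ it is a graph automorphism, giving $P^TAP=A$. The argument is essentially combinatorial once the tensor identity is in hand, so I do not foresee a serious technical obstacle; the only delicate point is the precise meaning of ``generic $T\in\cV_G$'', which is clean here because $\cV_G$ is a coordinate subspace. I suspect the conjecture status reflects that the authors aimed at the stronger problem of classifying all of $\mathcal{G}_T(\cV_G)\cap O(d)$, well beyond the signed-permutation case treated here.
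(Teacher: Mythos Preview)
The paper states this as a conjecture and offers no proof---only two worked examples (the star tree and the chain graph), and in those examples only the ``if'' direction is verified. Your argument is correct and in fact settles the conjecture as stated. The computation
\[
(Q\bullet T)_{i_1,\ldots,i_r}=\pm\, T_{\sigma^{-1}(i_1),\ldots,\sigma^{-1}(i_r)}
\]
is exactly the right starting point; from there the ``if'' direction is immediate, and your test tuples $(i,j,i,\ldots,i)$ on non-edges, combined with the fact that $\cV_G$ is a coordinate subspace so that a generic $T\in\cV_G$ has every unconstrained entry nonzero, cleanly force $\sigma^{-1}$ to send non-edges to non-edges. The last step---that a bijection of $V$ preserving non-adjacency is a graph automorphism---is correct by counting unordered pairs.

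Your closing remark is also on point: the conjectural status appears either to be an oversight or to reflect the authors' real target, namely characterising \emph{all} orthogonal $Q$ in $\mathcal{G}_T(\cV_G)$ rather than just the signed permutations. That broader question is the content of their subsequent conjecture for forests, which your argument does not (and is not meant to) address.
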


We will illustrate this conjecture with a couple of examples.

\begin{exmp}
    Consider a star tree $S_d$ as in Figure \ref{fig:G1} (M) with adjacency matrix $$A = \begin{pmatrix}
        0 & 1 & \ldots & 1 \\
        1 & 0 & \ldots & 0 \\
        \vdots &\vdots & & \vdots\\
        1 & 0 & \ldots & 0
    \end{pmatrix}.$$ Let $P = (p_{ij})_{i,j\in[d]}\in \mathcal{P}(d)$ be a permutation matrix. 
    We will show that $P$ defines an automorphism on $S_d$ if and only if $P = \begin{pmatrix}
        1 & \mathbf{0}^T \\
        \mathbf{0} & P'
    \end{pmatrix}$ where $P'\in \mathcal{P}(d-1)$ and $\mathbf{0}^T=(0,\ldots,0)^T\in\RR^{d-1}$. 
    That is, $P$ defines an automorphism on $S_d$ if and only $P^TAP=A$, or equivalently,
    $$AP = \begin{pmatrix}
        \sum_{i=2}^d p_{i1}  & \ldots & \sum_{i=2}^d p_{im}\\
        p_{11} & \ldots & p_{15} \\
        \vdots \\
        p_{11} & \ldots & p_{m1}\\
    \end{pmatrix} =
    \begin{pmatrix}
        \sum_{i=2}^d p_{1i} & p_{11} & \ldots & p_{11}\\
        \vdots \\
        \sum_{i=2}^d p_{mi} & p_{m1} & \ldots & p_{m1}\\
    \end{pmatrix}
     = PA.
    $$
    It follows that $p_{11}$ must be equal to $1$. Otherwise, if $p_{11}=0$ it is satisfied that $\sum_{i=2}^d p_{ij}=0$ for all $j\geq 2$ and therefore $p_{1j}= 1 \ \forall j\geq 2$ which contradicts with the fact that $P$ is a permutation matrix. If $p_{11}= 1$ it directly follows that $Q = \begin{pmatrix}
       1 & \mathbf{0}^T \\
        \mathbf{0} & P'
    \end{pmatrix}$ and $P'\in \mathcal{P}(d-1)$.

    Let $T\in \mathcal{V}_{S_d}$, we will see that if $Q = \begin{pmatrix}
        \pm 1 & \mathbf{0}^T \\
        \mathbf{0} & Q'
    \end{pmatrix}$, where $Q'\in SP(d-1)$, then $Q\in\mathcal{G}_T(\cV_{S_d})$. Similar to Corollary \ref{cor:starTree}, from \eqref{eq:V_G} we deduce that $T_{i_1,\ldots, i_r}=0$ if $1\not\in \{i_1,\ldots, i_r\}$ and not all of the indices ${i_1,\ldots, i_r}$ are equal. 
    That is take a set of indices $\{i_1,\ldots, i_r\}$ satisfying the previous two conditions, then
    $$ (Q\bullet T)_{i_1,\ldots, i_r} = \sum_{j_2,\ldots,j_r} Q_{i_11}Q_{i_2j_2}\cdots Q_{i_rj_r} T_{1j_2\ldots j_r} + \sum_{j\neq 1} Q_{i_1j}\cdots Q_{i_rj} T_{j\ldots j}$$
which equals zero since $Q_{i_11}=0$ for $i_1>2$ and $Q_{i_1j}\cdots Q_{i_rj}=0$ for any $j\in[d]$ since it is a product of entries of the same column of $Q$.
\end{exmp}

\begin{exmp}
 In this example we consider a chain tree $G$ as shown in Figure \ref{fig:G1} (R). Note that the adjacency matrix of $G$ is $$A = \begin{pmatrix}
     0 & 1 & 0 & 0 & \ldots & 0 & 0 \\
     1 & 0 & 1 & 0 & \ldots & 0 & 0 \\
     0 & 1 & 0 & 1 & \ldots & 0 & 0 \\
     \vdots \\
     0 & 0 & 0 & 0 & \ldots & 0 & 1 \\
     0 & 0 & 0 & 0 & \ldots & 1 & 0
 \end{pmatrix}.$$
 If a permutation matrix $P = (p_{ij})_{i,j\in[d]}\in \mathcal{P}(n)$ satisfies $AP=PA$, then it follows that the sum of the first row of $AP$ and $PA$ must be equal. This means, $\sum_i (AP)_{1i} = \sum_i p_{2i}$ equals $ \sum_i (PA)_{1i}=p_{11} + p_{1m} + 2\sum_{i=2}^{m-1}p_{1i}$, and therefore $ p_{11} + p_{1m} + 2\sum_{i=2}^{m-1}p_{1i}= 1$. Since $P$ is a permutation matrix, this implies that $p_{1i}$ must be zero for all $2\leq i\leq m-1$ and then, either $p_{11}=1$ or $p_{1m}=1$. Summing up the last rows of $AP$ and $PA$ we can also deduce that in the last row of $P$ either $p_{m1}=1$ or $p_{mm}=1$.

 If $p_{11} = p_{mm} = 1$, we have that $(AP)_{21} = 1+p_{31} = p_{22} = (PA)_{21}$ and therefore $p_{22}=1$. With a recursive argument, we deduce that $p_{ii} =1$ for all $i\in[d]$. A similar argument can be used if $p_{1m} = p_{m1} = 1$, deducting that $p_{i,m+i-1}=1.$
 Therefore $P$ is an automorphism of $G$ if and only if the non-zero entries of $P$ are $p_{ii}$ or $p_{i,m-i+1}$, that is $P$ is either a diagonal or an anti-diagonal matrix with non-zero entries equal to $1$.

 If $T\in\cV_{G}$, from \eqref{eq:V_G} we have that $T_{i_1\ldots i_r} =0$ if $|i_j-i_{j+1}|>1$ for some $j$ (similar to Corollary \ref{cor:chainTree}). Let $Q\in SP(D)$ be a diagonal or anti-diagonal matrix with non-zero entries equal to $\pm 1$. For any set of indices ${i_1\ldots i_r}$ such that $|i_j-i_{j+1}|>1$ for some $j$ we have that
 $$(Q\bullet T)_{i_1\ldots i_r} = \sum_{j_1,\ldots,j_r} Q_{i_1j_1}Q_{i_2j_2}\cdots Q_{i_rj_r} T_{j_1j_2\ldots j_r}=0$$
 since the only non-zero terms $T_{j_1j_2\ldots j_r}$ satisfy $|i_j-i_{j+1}|\leq 1$ for all $j$ and $Q_{i_1j_1}Q_{i_2j_2}\cdots Q_{i_rj_r} \neq 0$ if and only if $|i_k-i_{k+1}|\leq 1$ and $|j_k-j_{k+1}|\leq 1$ for all $k$. Therefore $Q\in\mathcal{G}_T(\cV_G)$.
\end{exmp}

We conclude the section with a stronger conjecture for trees.

\begin{conj}
Let $G$ be a forest, such that each connected component is a tree with at least $3$ nodes. 
    Let $A$ be the adjacency matrix of $G$ and $Q\in O(d)$ an orthogonal matrix. Then, $Q\in\mathcal{G}_T(\cV_G)$ if and only if $Q\in SP(d)$ and $P^TAP=A$, where $P\in\mathcal{P}(d)$ is a permutation matrix such that $P_{ij} = |Q_{ij}|$.
\end{conj}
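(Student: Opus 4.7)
The reverse implication is immediate: if $Q\in SP(d)$ and $P=|Q|$ satisfies $P^TAP=A$, then the underlying permutation $\sigma$ of $P$ is a graph automorphism of $G$. For any multi-index $(i_1,\ldots,i_r)$ the entry $(Q\bullet T)_{i_1,\ldots,i_r}$ equals $\pm T_{\sigma(i_1),\ldots,\sigma(i_r)}$, and since $\sigma$ preserves edges of $G$, the subgraph induced by $\{i_1,\ldots,i_r\}$ is disconnected if and only if the one induced by $\{\sigma(i_1),\ldots,\sigma(i_r)\}$ is, so zero entries of $T$ map to zero entries of $Q\bullet T$.

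For the forward direction, the plan is a reduction in two stages. First, decompose $G=G_1\sqcup\cdots\sqcup G_m$ with each $G_i$ a tree on $k_i\geq 3$ vertices, and let $\mathbf{I}=V(G_1)\sqcup\cdots\sqcup V(G_m)$. Since any connected induced subgraph of $G$ must lie inside a single component, $\cV_G\subseteq \cV_{\mathbf{I}}$, and for generic $T\in\cV_G$ each block $T_i$ is a generic (hence nonzero) element of $\cV_{G_i}$. A version of Theorem \ref{thm:algPICA} adapted to unequal block sizes then forces $Q\in \mathcal{P}_m O(k_1,\ldots,k_m)$, with the unique nonzero block in each column realizing an isomorphism between the corresponding trees $G_i$ and $G_j$ (in particular $k_i=k_j$). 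This reduces the problem to the case where $G$ is a single tree on $d\geq 3$ vertices.

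In the single-tree case the plan is a Hessian analysis in the spirit of Theorem \ref{thm:algPICA}, combined with induction on leaves. From Lemma \ref{lem:hessfAT} one has the key identity
\begin{equation*}
    Q\,\nabla^2 f_T(Q^T\xx) = \nabla^2 f_{Q\bullet T}(\xx)\,Q.
\end{equation*}
For $T\in\cV_G$ generic, the $(u,v)$ entry of $\nabla^2 f_T(\xx)$ is a polynomial of degree $r-2$ supported exactly on monomials $x_{j_1}\cdots x_{j_{r-2}}$ for which $\{u,v,j_1,\ldots,j_{r-2}\}$ induces a connected subgraph of $G$; when $(u,v)$ is a non-edge, its support is forced to contain a $u$--$v$ path in $G$. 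The induction proceeds by stripping off a leaf $v$ with unique neighbor $u$: the sparsity pattern at indices involving $v$ first forces $Qe_v=\pm e_{v'}$ for some leaf $v'$ of $G$, then $Qe_u=\pm e_{u'}$ where $u'$ is the neighbor of $v'$, after which the inductive hypothesis applied to the subtree $G\setminus\{v\}$ and the restriction of $Q$ to its vertex set finishes the argument. The base case $d=3$ (the path $1$--$2$--$3$) is verified directly from the two constraints $T_{113}=T_{133}=0$, which force $Q$ to preserve the partition $\{\{1,3\},\{2\}\}$ and hence be a signed permutation corresponding to either the identity or the reflection.

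The main obstacle is this single-tree step. Unlike the block-diagonal setting of Theorem \ref{thm:algPICA}, the Hessian $\nabla^2 f_T$ for a tree-supported $T$ has a chordal-like sparsity rather than a block-diagonal one, and extracting rigid constraints on $Q$ from the resulting polynomial identities is delicate. The trickiest configurations are internal vertices of high degree, which behave locally like star trees and admit nontrivial signed-permutation symmetries among their pendant leaves, so the argument must carefully track how these local symmetries assemble into a global graph automorphism. Some additional care is also needed in the first reduction step to cover rank-deficient or rectangular blocks of $Q$ that fall outside the current hypotheses of Theorem \ref{thm:algPICA}, likely by specializing the generic tensor $T$ to reveal extra vanishing constraints on such blocks.
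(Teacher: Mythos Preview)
The statement you are attempting to prove is explicitly labeled a \emph{conjecture} in the paper, and the paper provides no proof of it. The surrounding section only establishes the easy direction (signed permutations arising from graph automorphisms preserve $\cV_G$) in two worked examples (star and chain), and then records this forward direction as an open problem. So there is no paper proof to compare your attempt against.

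As for the attempt itself: your reverse implication is correct and matches what the paper does verify in its examples. Your forward-direction plan, however, remains a plan with the hard part unfilled, and you are candid about this. Two concrete gaps are worth flagging. First, your reduction to a single tree via Theorem~\ref{thm:algPICA} does not go through as stated: that theorem \emph{assumes} each block $Q_{ij}$ is full rank or zero, whereas here $Q$ is an arbitrary orthogonal matrix, and Example~\ref{ex:counterex} shows that without the block-rank hypothesis the conclusion can fail for $\cV_{\mathbf I}$. You would need an independent argument that the richer zero pattern of $\cV_G\subsetneq\cV_{\mathbf I}$ rules out such rank-deficient configurations, and that is not supplied. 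Second, the single-tree induction is only a sketch: the claimed step ``the sparsity pattern at indices involving $v$ forces $Qe_v=\pm e_{v'}$ for some leaf $v'$'' is exactly the heart of the matter and is not argued. In short, what you have written is a reasonable outline of where a proof might live, but it does not close the conjecture, and neither does the paper.
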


}

\section{Identifiability of Partitioned Independence Component Analysis}\label{sec:statisticalPICA}

In this section, we want to review the proof of Comon's Theorem \ref{thm:comons} for the case of partitioned independence. This relies on the generalization of the Skitovich-Darmois theorem for the case of random vectors given by Ghurye and Olkin \cite{Ghurye1962}.
We start by generalizing Lemma \ref{lem:darLem}, Theorem \ref{thm:darm} and Corollary \ref{cor:darmois} to the case of independent random vectors, instead of independent random variables and we will see that it is not sufficient to extend Comon's theorem to independent vectors. The following two results can be found in \cite{bookVectors}.

\begin{lem} \label{lem:finitediffgen}
    Consider $$\sum_{i=1}^m f_i (\ttt_1+C_i \ttt_2)=g(\ttt_1)+h(\ttt_2),$$
    where $\ttt_1,\ttt_2 \in \RR^n$. If $C_i$ are nonsingular matrices with 
    $\det(C_i-C_j)\neq 0$ 
    for $i\neq j,$ 
    then the functions $f_i,$ $g$, and $h$ are polynomials of degree at most $m.$
    \begin{proof}
        Let $\alpha_i^{(1)}=(I-C_iC_m^{-1}) \alpha^{(1)}$. Then 
        \begin{align}
            \sum_{i=1}^m f_i(\ttt_1+\alpha_i^{(1)}+C_i\ttt_2)&= \sum_{i=1}^m f_i\left(\ttt_1+\alpha^{(1)}+C_i(\ttt_2-C_m^{-1}\alpha^{(1)})\right) \nonumber \\
            &=g(\ttt_1+\alpha^{(1)})+h(\ttt_2-C_m^{-1}\alpha^{(1)}). \label{eq:vec}
        \end{align}
        Subtracting Equation \eqref{eq:vec} from the initial, we have $$\sum_{i=1}^{m-1} f_i^{(1)}(\ttt_1+C_i\ttt_s) =g^{(1)}(\ttt_1)+h^{(1)}(\ttt_2),$$
        where $f^{(1)}_i(\xx)= f_i\left(\xx+\alpha^{(1)}_i\right)-f_i(\xx)$ (i.e. finite differences from before). With a similar construction as the one used in Lemma \ref{lem:finiteness}, we arrive at $$f_1^{(m-1)}(\ttt_1+C_1\ttt_2) = g^{(m-1)}(\ttt_1)+h^{(m-1)}(\ttt_2),$$
        where $f_1^{(m-1)}(\xx)= f_1^{(m-2)}\left(\xx+\alpha_1^{(m-1)}\right)-f_1^{(m-1)}(\xx).$
        Hence, we see that $f_1^{(m-1)}$ is a linear function and furthermore, that $f_1^{(m-2)}$ is a polynomial of at most degree 2. Hence, we see that $f_1$ is a polynomial of degree at most $m.$ We can repeat this process for all of the other $f_i$'s and hence we are done.
    \end{proof}
\end{lem}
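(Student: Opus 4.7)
The plan is to mimic the scalar finite-difference argument of Lemma \ref{lem:darLem}, adapting it to the vector setting by replacing scalar shifts with elements of $\RR^n$ and exploiting the assumption $\det(C_i - C_j) \neq 0$ to guarantee nondegeneracy of the induced shifts.

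First, I would introduce a displacement $(\ttt_1, \ttt_2) \mapsto (\ttt_1 + \alpha^{(1)}, \ttt_2 - C_m^{-1}\alpha^{(1)})$ for an arbitrary $\alpha^{(1)} \in \RR^n$. A direct computation shows that the argument $\ttt_1 + C_m \ttt_2$ of $f_m$ is left invariant, while for each $i < m$ the argument of $f_i$ is shifted by
\begin{equation*}
\alpha_i^{(1)} := (I - C_i C_m^{-1})\,\alpha^{(1)}.
\end{equation*}
The hypothesis $\det(C_i - C_m)\neq 0$ is equivalent to invertibility of $I - C_i C_m^{-1}$, so these shifts can be made to realize any prescribed nonzero vector in $\RR^n$ by choosing $\alpha^{(1)}$ appropriately. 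Subtracting the shifted identity from the original eliminates $f_m$ and yields an equation of the same form with only $m-1$ summands, in which each $f_i$ is replaced by the finite difference $f_i^{(1)}(\xx) := f_i(\xx + \alpha_i^{(1)}) - f_i(\xx)$, and $g,h$ by their analogous finite differences $g^{(1)}, h^{(1)}$.

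Iterating this construction $m-1$ times, at each stage picking a fresh displacement that preserves the argument of the next function to be killed, produces a single-term relation of the shape
\begin{equation*}
f_1^{(m-1)}(\ttt_1 + C_1 \ttt_2) = g^{(m-1)}(\ttt_1) + h^{(m-1)}(\ttt_2).
\end{equation*}
Two additional finite-difference operations, one varying $\ttt_1$ only and one varying $\ttt_2$ only, then annihilate $g^{(m-1)}$ and $h^{(m-1)}$ in turn, so a mixed finite difference of $f_1$ of total order $m+1$ vanishes identically. Under the implicit smoothness (inherited, in the intended application, from analyticity of cumulant generating functions), this forces $f_1$ to be a polynomial of degree at most $m$. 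Permuting the roles of the indices repeats the argument for each $f_i$, and substituting back into the original identity yields the corresponding conclusion for $g$ and $h$.

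The main obstacle I expect is bookkeeping: one must verify that at every stage $k$ of the elimination, the effective coefficient matrices appearing in the reduced identity still satisfy a nonsingularity-of-differences hypothesis, so that the next shift $\alpha_i^{(k)}$ can again be chosen nondegenerate. This reduces cleanly to iterated applications of invertibility of $I - C_i C_j^{-1}$ for indices $i \neq j$ that have not yet been eliminated, which is exactly the hypothesis given. A secondary point worth stating carefully is that vanishing of a sufficiently high mixed finite difference of a smooth multivariate function implies polynomial behavior; this follows from the univariate result applied coordinate-by-coordinate, but should be made explicit to close the argument.
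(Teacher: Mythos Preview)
Your proposal is correct and follows essentially the same route as the paper: the same displacement $(\ttt_1,\ttt_2)\mapsto(\ttt_1+\alpha^{(1)},\ttt_2-C_m^{-1}\alpha^{(1)})$, the same identification $\alpha_i^{(1)}=(I-C_iC_m^{-1})\alpha^{(1)}$, the same iterated elimination down to a single $f_1^{(m-1)}$, and the same degree count back up to $f_1$. The only cosmetic difference is that the paper reads off linearity of $f_1^{(m-1)}$ directly from the additive relation $f_1^{(m-1)}(\ttt_1+C_1\ttt_2)=g^{(m-1)}(\ttt_1)+h^{(m-1)}(\ttt_2)$ rather than performing your two extra finite differences, but this is the same endgame and your added remarks on bookkeeping and the polynomiality step are welcome clarifications.
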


The following theorem generalizes Theorem \ref{thm:darm} for independent random vectors. Note that instead of asking coefficients $a_ib_j$ to be zero, here we need matrices $A_iB_j$ to be nonsingular.
\begin{thm}
    Let $\yy_1$ and $\yy_2$ be two independent random vectors in $\RR^k$ defined as $$\yy_1=\sum_{i=1}^m A_i \xx_i \quad \quad \yy_2=\sum_{i=1}^m B_i \xx_i,$$
    where $A_i$, $B_i$ are $k\times k$ matrices and $\{\xx_1,\hdots, \xx_m\}$ is a set of mutually independent vectors in $\RR^n$. If $A_jB_j$ is nonsingular, then the vector $\xx_j$ is Gaussian.
    
    \begin{proof}
        Let $\phi_{x_j}$ be the characteristic function associated to $\xx_j$. Then, $$\prod_{i=1}^m \phi_{\xx_i}(A_i^T \ttt_1+ B_i^T \ttt_2)=\prod_{i=1}^m \phi_{\xx_i}(A_i^T \ttt_1)\phi_{\xx_i}(B_i^T \ttt_2)$$
        where $\ttt_1,\ttt_2 \in \RR^k$.
        \par Assuming $A_i$ is nonsingular, we may take $A_i= I_n$ by transforming $\ttt_1$ appropriately. In this case, we have $$\prod_{i=1}^m \phi_{\xx_i}(\ttt_1+ B_i^T \ttt_2)=\prod_{i=1}^m \phi_{\xx_i}(\ttt_1)\phi_{\xx_i}(B_i^T \ttt_2).$$
        By the properties of characteristic functions, we may find some $c>0$ such that for all $|\ttt_1|<c$ and $|\ttt_2|<c$, we have that $\phi_{\xx_i}(\ttt_1)$, $\phi_{\xx_i}(B_i^T\ttt_2)$, and $\phi_{\xx_i}(\ttt_1+B_i^T\ttt_2)$ do not vanish. Then we find $$\sum_{i=1}^m \psi_{x_i}(s+B_i^T \ttt_2) = \sum_{i=1}^m \psi_{\xx_i}(B_i^T \ttt_2)+\sum_{i=1}^m \psi_{\xx_i}(\ttt_1),$$
        where $\psi_{\xx_i} = \ln |\phi_{\xx_i}|$ and $\psi_{\xx_i}(0)=0.$ We can see that $\psi_{\xx_i}$ are non-negative. Using Lemma \ref{lem:finitediffgen}, we can see that $\sum_{i=1}^m \psi_{\xx_j}(s)$ is a polynomial, say $P(\ttt_1)$, so the characteristic function of $\yy_1$ is of the form $e^{P(\ttt_1)}$. By Lemma \ref{lem:Marc}, $\yy_1$ has an $n$-variate Gaussian distribution. However, $\yy_1$ is a linear combination of independent random vectors, and thus by Lemma \ref{lem:Cramer}, each component of $\yy_1$ is a $k$-variate normal variable.
    \end{proof}
\end{thm}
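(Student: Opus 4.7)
The plan is to mirror the classical Skitovich-Darmois argument (Theorem \ref{thm:darm}) at the vector level, replacing scalar coefficients by matrices and invoking Lemma \ref{lem:finitediffgen} in place of Lemma \ref{lem:darLem}. The engine is the characteristic-function identity encoding independence of $\yy_1$ and $\yy_2$, together with the mutual independence of the $\xx_i$'s, which together give, for $\ttt_1,\ttt_2\in\RR^k$,
\begin{equation*}
\prod_{i=1}^m \phi_{\xx_i}(A_i^T\ttt_1+B_i^T\ttt_2)\;=\;\phi_{\yy_1}(\ttt_1)\,\phi_{\yy_2}(\ttt_2)\;=\;\prod_{i=1}^m\phi_{\xx_i}(A_i^T\ttt_1)\cdot\prod_{i=1}^m\phi_{\xx_i}(B_i^T\ttt_2).
\end{equation*}

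First I would restrict $(\ttt_1,\ttt_2)$ to a small neighborhood of the origin where every factor $\phi_{\xx_i}$ is nonvanishing, so that the logarithms $\psi_{\xx_i}=\log\phi_{\xx_i}$ are well defined, yielding
\begin{equation*}
\sum_{i=1}^m \psi_{\xx_i}(A_i^T\ttt_1+B_i^T\ttt_2)\;=\;\sum_{i=1}^m\psi_{\xx_i}(A_i^T\ttt_1)\;+\;\sum_{i=1}^m\psi_{\xx_i}(B_i^T\ttt_2).
\end{equation*}
Next I would reduce to the normalized form required by Lemma \ref{lem:finitediffgen}. Since $A_jB_j$ is nonsingular, both $A_j$ and $B_j$ are invertible; replacing $\ttt_1\mapsto A_j^{-T}\ttt_1$ we may assume $A_j=I_k$, and then the $i$-th summand on the left has argument of the form $\ttt_1 + C_i\ttt_2$ with $C_j=B_j$. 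Under the genericity hypothesis that the matrix differences $C_i-C_\ell$ are nonsingular for $i\ne \ell$ (which is the vector analogue of the scalar condition $\alpha_i\beta_\ell-\alpha_\ell\beta_i\ne 0$ and can be arranged after discarding indices where this fails, much as in the proof of Theorem \ref{thm:darm}), Lemma \ref{lem:finitediffgen} forces $\psi_{\xx_j}$ to be a polynomial of degree at most $m$.

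From here I would finish by combining the multivariate analogue of Marcinkiewicz's Lemma \ref{lem:Marc} with Cram\'er's Lemma \ref{lem:Cramer}. Concretely, because $\phi_{\xx_j}(\ttt)=\exp P(\ttt)$ with $P$ a polynomial, applying the univariate Lemma \ref{lem:Marc} to every one-dimensional marginal $\ttt\mapsto\phi_{\xx_j}(tu)$ (which has characteristic function $e^{P(tu)}$, a polynomial in $t$) shows that every linear functional $u^T\xx_j$ is Gaussian, hence $\xx_j$ is jointly Gaussian. Alternatively, one can directly invoke the vector-Marcinkiewicz statement sketched in the paragraph immediately following the theorem statement.

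The step I expect to be the main obstacle is precisely the reduction to the hypotheses of Lemma \ref{lem:finitediffgen}: the scalar trick of ``if $\alpha_i\beta_j-\alpha_j\beta_i=0$, add $y_1$ and $y_2$ and invoke Cram\'er'' does not translate cleanly to matrices, because $\det(A_iB_\ell-A_\ell B_i)$ can vanish without either $A_iB_i$ or $A_\ell B_\ell$ being singular. I would handle this by working one index $j$ at a time, grouping together those $\xx_i$ for which $\det(C_i-C_j)=0$ into a single ``bad'' linear combination, and showing that after this regrouping the surviving matrix differences are nonsingular so that Lemma \ref{lem:finitediffgen} applies to the reduced identity. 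Once polynomiality of $\psi_{\xx_j}$ is in hand, the passage to Gaussianity via Lemmas \ref{lem:Marc} and \ref{lem:Cramer} is routine.
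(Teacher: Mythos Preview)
Your approach is essentially the same as the paper's: derive the characteristic-function identity from independence, pass to logarithms near the origin, normalize via $A_j$, and invoke Lemma~\ref{lem:finitediffgen} followed by Marcinkiewicz and Cram\'er. The only substantive difference is in the endgame: the paper applies Lemma~\ref{lem:finitediffgen} to conclude that the \emph{sum} $\sum_i\psi_{\xx_i}$ is a polynomial, deduces that $\yy_1$ is Gaussian via Lemma~\ref{lem:Marc}, and then uses Cram\'er's Lemma~\ref{lem:Cramer} to peel off the individual $\xx_j$; you instead read off directly from Lemma~\ref{lem:finitediffgen} that $\psi_{\xx_j}$ itself is a polynomial and then apply Marcinkiewicz to one-dimensional marginals of $\xx_j$. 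Both routes are valid, and yours is arguably more direct since Lemma~\ref{lem:finitediffgen} already gives polynomiality of each $f_i$, not just of $g$ and $h$.

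You are also more scrupulous than the paper about the hypotheses of Lemma~\ref{lem:finitediffgen}: the paper simply writes ``Assuming $A_i$ is nonsingular, we may take $A_i=I_n$'' and then applies the lemma without checking that the resulting $C_i$ satisfy $\det(C_i-C_\ell)\neq 0$. Your discussion of this reduction---and the observation that the scalar trick of adding $y_1$ and $y_2$ does not translate cleanly---correctly identifies the one genuinely delicate point in the argument, which the paper's own proof glosses over.
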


A peculiarity of the matrix case derives from the distinction between singularity and vanishing of a matrix. In the one-dimensional problem, if one of the coefficients $a_i$ or $b_i$ is zero, the distribution of the corresponding random variable can be completely arbitrary. The same holds for the matrix case if one of the matrices $A_i$ or $B_i$ is zero. However, if a matrix $A_i$ is singular but non-zero, then some linear combinations of elements of the corresponding random vector $\xx_i$ may be normally distributed, but the distribution of $A_i$ is partially arbitrary.

The following corollary gives us the necessary conditions for the ISA problem.
\begin{cor} \label{cor:darmoisGen}
    Let $\yy\in\RR^{d}$ and $\sss\in\RR^{d}$ be two random vectors of the form $$\yy=\begin{pmatrix}
        \yy_1\\
        \yy_2\\
        \vdots \\
        \yy_m
    \end{pmatrix}\quad \quad \sss=\begin{pmatrix}
        \sss_1\\
        \sss_2\\
        \vdots \\
        \sss_m
    \end{pmatrix}$$ where each $\sss_i \in \RR^k$ are mutually independent and $\yy_i\in \RR^k$ are pairwise independent such that $\yy= A\sss$ where $A\in\mathcal{B}_m(k)$ is a block square matrix with blocks $A_{ij}$.
    If $A$ has two non-singular block matrices in the same column $A_{1j},\ldots, A_{mj}$, then $\sss_j$ is either Gaussian or constant.
\end{cor}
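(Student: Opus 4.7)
The plan is to mirror the structure of the proof of Corollary \ref{cor:darmois}, replacing the scalar Darmois–Skitovich theorem by its vector-valued generalization established earlier in this section. The hypothesis guarantees that there exist two distinct block-row indices $i_1, i_2 \in \{1,\ldots,m\}$ for which both $A_{i_1,j}$ and $A_{i_2,j}$ are non-singular $k\times k$ matrices; extracting these two block-rows from the equation $\yy = A \sss$ gives
$$\yy_{i_1} \;=\; \sum_{k=1}^m A_{i_1,k}\,\sss_k, \qquad \yy_{i_2} \;=\; \sum_{k=1}^m A_{i_2,k}\,\sss_k.$$

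Next, I would invoke the two independence assumptions to set up the hypotheses of the vector Darmois–Skitovich theorem. The mutual independence of the subvectors $\sss_1,\ldots,\sss_m$ provides the required independent ``input'' vectors, while the pairwise independence of the subvectors $\yy_i$ ensures that $\yy_{i_1} \indep \yy_{i_2}$. Identifying the roles $A_k \leftrightarrow A_{i_1,k}$ and $B_k \leftrightarrow A_{i_2,k}$, I would then apply the theorem to the index $j$: since $A_{i_1,j}$ and $A_{i_2,j}$ are both non-singular, so is their product $A_{i_1,j} A_{i_2,j}$, and the theorem concludes that $\sss_j$ has a multivariate Gaussian distribution.

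The only remaining subtlety is the ``or constant'' alternative: this simply accounts for the degenerate Gaussian case in which $\sss_j$ has zero covariance, since a random vector with singular covariance concentrated at a point is formally Gaussian but effectively a constant. I would briefly note this as the boundary case, after which the proof is complete. The main obstacle is essentially hidden upstream, in the proof of the vector Darmois–Skitovich theorem itself (which relies on Lemma \ref{lem:finitediffgen}, Lemma \ref{lem:Marc}, and Lemma \ref{lem:Cramer}); once that machinery is in place, the present corollary is a direct translation of Corollary \ref{cor:darmois} from scalars to blocks, with ``non-zero entry'' replaced by ``non-singular block''.
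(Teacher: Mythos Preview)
Your proposal is correct and matches the paper's approach: the paper states this corollary without explicit proof, treating it as an immediate consequence of the vector Darmois--Skitovich theorem just proved, which is exactly the deduction you spell out. Your handling of the ``or constant'' alternative as the degenerate-covariance case is a reasonable gloss that the paper itself does not bother to comment on.
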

\begin{prop}
    Let $\sss\in \RR^{d}$ be a vector with mutually independent non-Gaussian vector components $\sss_i$ (the variables in the components $\sss_i$ do not need to be independent). Let $A$ be an orthogonal $n\times n$ matrix and $\yy=A\sss$ be a random vector.  Then the following statements hold:
    \begin{enumerate}
        \item If the components $\yy_i$ are mutually independent then they are pairwise independent.
        \item If $A\in \mathcal{P}_m O(k)$ is a block orthogonal matrix, then the components $\yy_i$ are mutually independent.
        \item If the components $\yy_i$ are pairwise independent then $A$ has at most one non-singular block at each column.
    \end{enumerate}

    \begin{proof}
    Statement $(1)$ is automatic by definition.
    
    The second one follows because $A \sss$ simply permutes the vectors $\sss_i$ and makes the components of each $\sss_i$ linear combinations of the others. Hence, we see that since $\sss$ is mutually independent, the multiplication by $A$ does not affect the independence of the components.
    
    Lastly, to prove $(3)$ suppose $A$ 
    has some sub-block structure 
        such that none of the sub-blocks is a singular matrix. If this is the case, apply Corollary \ref{cor:darmoisGen} twice, then two components of $\sss$ are Gaussian, which contradicts our hypothesis.
    \end{proof}
\end{prop}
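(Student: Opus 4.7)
The plan is to establish the three claims in ascending order of difficulty.

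\textbf{Statement (1)} is immediate from the definitions: mutual independence of $\yy_1,\ldots,\yy_m$ means that the joint law factors over every sub-collection, in particular over every pair, which is exactly pairwise independence.

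\textbf{Statement (2)} follows from the explicit structure of matrices in $\mathcal{P}_m O(k)$. By definition such an $A$ has exactly one non-zero block in each block row and each block column, and each non-zero block lies in $O(k)$. Thus there exist a permutation $\sigma$ of $[m]$ and orthogonal matrices $Q_1,\ldots,Q_m\in O(k)$ such that $\yy_i = Q_i \sss_{\sigma(i)}$ for every $i$. Since $\sss_1,\ldots,\sss_m$ are mutually independent, any relabeling of them remains mutually independent, and applying deterministic linear maps $Q_i$ componentwise preserves mutual independence of the resulting family. Hence $\yy_1,\ldots,\yy_m$ are mutually independent.

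For \textbf{statement (3)}, I would argue by contrapositive. Assume that some block column $j$ of $A$ contains two non-singular blocks $A_{ij}$ and $A_{kj}$ with $i\neq k$. Writing the blockwise equations
\begin{equation*}
    \yy_i = \sum_{\ell=1}^m A_{i\ell}\sss_\ell, \qquad \yy_k = \sum_{\ell=1}^m A_{k\ell}\sss_\ell,
\end{equation*}
the vectors $\sss_1,\ldots,\sss_m$ are mutually independent by hypothesis, and both of the coefficient blocks on $\sss_j$ are non-singular. Pairwise independence of the $\yy$'s yields $\yy_i \indep \yy_k$, so Corollary \ref{cor:darmoisGen} applies and forces $\sss_j$ to be Gaussian or constant, contradicting the standing assumption that every $\sss_i$ is non-Gaussian.

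The main obstacle is statement (3): the whole argument hinges on invoking the vector-valued Darmois--Skitovich theorem (Corollary \ref{cor:darmoisGen}) correctly, so care is needed to verify its non-singularity hypothesis on a pair of blocks in a common column matches exactly what the contrapositive produces. Statements (1) and (2) amount to bookkeeping with the definitions and the block structure.
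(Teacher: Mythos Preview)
Your proposal is correct and follows essentially the same approach as the paper: (1) is by definition, (2) uses the permutation-plus-orthogonal-block structure of $\mathcal{P}_m O(k)$, and (3) invokes Corollary~\ref{cor:darmoisGen} on a block column with two non-singular blocks to force a Gaussian $\sss_j$. Your argument for (3) is in fact slightly cleaner, since a single application of the corollary already contradicts the hypothesis that every $\sss_i$ is non-Gaussian, whereas the paper phrases it as applying the corollary twice.
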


Another peculiarity of the vector case is that no vector component $\sss_i$ can be Gaussian.
Moreover, this proposition shows us that block orthogonal matrices alone are not necessary for independence of $\yy$, and that orthogonal matrices with singular blocks are not sufficient. 
This suggests that there is no direct generalization of Comon's result. The set of orthogonal matrices that preserve the independence of $\yy$ are orthogonal matrices with certain rank restrictions on their blocks. What is clear is that this is not a finite set, so it makes recovery of the original matrix $A$ challenging.

\begin{exmp}
Consider the orthogonal matrix
$Q \in \mathcal{B}_2(2)$ from Example \ref{ex:counterex}, and note that all blocks $B_{ij}$ are singular. Let $x_1$ and $x_2$ be two independent random variables and consider the random vector $\sss=(x_1, -x_1, x_2, -x_2)^T,$ such that the subvectors $\sss_1=(x_1, -x_1)^T$ and $\sss_2=(s_2, -s_2)^T$ are independent. Then 
$$Q\left(\begin{array}{c}
     x_1\\ -x_1\\ x_2\\ -x_2 \\ 
\end{array}\right)=\left(\begin{array}{c}
     -2x_1\\ 2x_1\\ -2x_2\\ 2x_2 \\ 
\end{array}\right)=\yy,$$
and it follows that the random vectors $\yy_1=(-2x_1, 2x_1)^T$ and $\yy_2=(-2x_2, 2x_2)^T$ are also independent.
\end{exmp}

Similarly to Theorem \ref{thm:algPICA}, there is one extra condition on $A$ that can be assumed in order to determine the structure of the matrix $A$. The following theorem follows from \cite{Theis2004}.

\begin{thm}\label{thm:blockortho}
Let $\sss = (\sss_1,\ldots,\sss_m)^T\in\RR^{d}$ be a random vector with partitioned independence and no Gaussian components $\sss_i\in\RR^k$. 
Let $A\in\mathcal{B}_m(k)$ be an invertible matrix with the block structure such that each block $A_{ij}$ is either invertible or zero.
If $\yy= A\sss\in\RR^{d}$ has also partitioned independence, then there is exactly one non-zero block in each column $A_{1j},\ldots,A_{mj}.$ If $\yy$ is white and $A$ is orthogonal, then $A\in\mathcal{P}_m O(n).$
\end{thm}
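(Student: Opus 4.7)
The plan is to use Corollary \ref{cor:darmoisGen} to constrain the block structure of $A$, forcing a block-permutation pattern, and then exploit orthogonality on each surviving block.

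First, since $\yy = A\sss$ is assumed to have partitioned independence, the subvectors $\yy_1, \ldots, \yy_m$ are mutually independent and in particular pairwise independent. Fix a block-column index $j \in [m]$ and suppose for contradiction that two blocks $A_{ij}$ and $A_{kj}$ with $i \neq k$ are both non-zero. By the hypothesis on $A$ they are both invertible, hence non-singular, so Corollary \ref{cor:darmoisGen} applies and forces $\sss_j$ to be either Gaussian or constant. Gaussianity is excluded by assumption, and constancy is excluded by the standing ICA/PICA convention that each source component has unit variance. Thus at most one block per column is non-zero, and since $A$ is invertible no column can be entirely zero, so exactly one block in each column is non-zero (and invertible). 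This proves the first assertion.

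Next, assume $\yy$ is white and $A$ is orthogonal. Define $\pi : [m] \to [m]$ by letting $\pi(j)$ be the unique index with $A_{\pi(j),j} \neq 0$. I claim $\pi$ is a bijection. If $\pi(j_1) = \pi(j_2)$ for some $j_1 \neq j_2$, then some block row of $A$ would contain only zero blocks, so the corresponding $k$ rows of $A$ would be zero, contradicting invertibility. Hence $\pi \in S_m$ and $A$ has a block-permutation pattern with invertible blocks $A_{\pi(j),j}$. The $(i,j)$-block of $A^T A$ equals $\sum_{l=1}^m A_{l,i}^T A_{l,j}$; for $i \neq j$ no term can be non-zero since $A_{l,i}\neq 0$ forces $l = \pi(i)$ while $A_{l,j}\neq 0$ forces $l = \pi(j)$, and these differ. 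For $i=j$ the single surviving term gives $A_{\pi(i),i}^T A_{\pi(i),i} = I_k$, so each non-zero block is itself orthogonal. Hence $A \in \mathcal{P}_m O(k)$.

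The only non-routine ingredient is the appeal to Corollary \ref{cor:darmoisGen}, which is the matrix-valued Skitovich–Darmois theorem packaged for the block setting; verifying that pairwise independence of $\yy_1,\ldots,\yy_m$ together with mutual independence of $\sss_1,\ldots,\sss_m$ correctly feeds into that corollary is the main conceptual step. Everything afterwards is combinatorial bookkeeping on the block support of an invertible matrix together with a direct computation involving $A^T A$.
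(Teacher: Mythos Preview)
Your argument is correct. The first part matches the paper's proof exactly: both invoke Corollary~\ref{cor:darmoisGen} on a pair of non-zero blocks in the same block-column to force $\sss_j$ Gaussian, contradicting the hypothesis. You are in fact slightly more careful than the paper, since you explicitly dispose of the ``constant'' alternative in Corollary~\ref{cor:darmoisGen} and you spell out why invertibility of $A$ upgrades ``at most one'' to ``exactly one'' non-zero block per column.

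The second part is where you diverge. The paper deduces $A\in\mathcal{P}_m O(k)$ by appealing to its main algebraic result, Theorem~\ref{thm:generalThmAlgPICA}: partitioned independence of $\sss$ forces $\kappa_r(\sss)\in\cV_{\mathbf{I}}$, and then the tensor machinery pins down $A$. You instead give a direct, elementary argument: from the block-permutation support pattern established in part one, a pigeonhole on the map $\pi$ shows it is a bijection, and then reading off the diagonal blocks of $A^TA=I_d$ forces each surviving block to be orthogonal. Your route is self-contained and avoids cumulant tensors entirely; the paper's route has the virtue of tying the statement back to its central theorem and illustrating that Theorem~\ref{thm:generalThmAlgPICA} genuinely subsumes this classical-style result. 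Both are valid, and yours is arguably the cleaner proof of this particular statement in isolation.
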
 

\begin{proof}
    Assume that $A$ has a column with two invertible blocks $A_{i_1j}$, $A_{i_2j}$. If we apply Corollary \ref{cor:darmoisGen} to $\yy_{i_1}$ and $\yy_{i_2}$ then $\sss_j$ must be Gaussian, which is a contradiction.
    {If $\yy$ is white and $A$ is orthogonal the result also follows from Theorem \ref{thm:generalThmAlgPICA}. That is, if $\sss$ has partitioned independence, from Proposition \ref{prop:partitioning} it is clear that $\kappa_r(\sss)\in\cV_{\mathbf{I}}$ with $\mathbf{I} = I_1\sqcup\ldots\sqcup I_m$, where $I_i$ contains the indices of $\sss_i$. Then, from Theorem \ref{thm:generalThmAlgPICA} it follows that $A\in\mathcal{P}_m O(n).$ 
    }
\end{proof}

Note that Theorem \ref{thm:generalThmAlgPICA} generalizes the previous result as independence of $\sss$ is not strictly required and the vectors $\sss_i$ are allowed to have different sizes.

{
\begin{cor}
    Let $\sss = (\sss_1,\ldots,\sss_m)^T\in\RR^{d}$ be a random vector with partitioned independence and no Gaussian components. Additionally, within the partitions $\sss_i$, suppose that all components are mutually mean independent of one another.
    Let $A\in\mathcal{B}_m(k)$ be an invertible matrix such that each block $A_{ij}$ is either invertible or zero. Then, if $\yy= A\sss$ is white and $A$ is orthogonal it follows that $A\in\mathcal{P}_m SP(k).$
\begin{proof}
    The result follows directly from Theorem \ref{thm:blockortho} and Corollary \ref{cor:reflInvTensor} since $\kappa_4(\sss)$ is a reflexionally invariant tensor (see Proposition \ref{prop:meanIndepTensor}).
    
\end{proof}
\end{cor}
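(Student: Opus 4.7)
The plan is to combine Theorem \ref{thm:blockortho} with Corollary \ref{cor:reflInvTensor}. The first gives block-orthogonality $A \in \mathcal{P}_m O(k)$, and the second refines this to signed permutations within each block by exploiting reflectional invariance of the fourth cumulant on each diagonal sub-tensor; the mean independence hypothesis inside each $\sss_i$ is exactly what produces that reflectional invariance, and $r=4$ is the natural even order $>3$ to use.

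First, I would verify that the hypotheses of Theorem \ref{thm:blockortho} are in force: $\sss$ has partitioned independence with non-Gaussian components $\sss_i$, $A \in \mathcal{B}_m(k)$ has each block either invertible or zero, $\yy = A\sss$ is white, and $A$ is orthogonal. That theorem yields $A \in \mathcal{P}_m O(k)$. Next I would analyze $\kappa_4(\sss)$: partitioned independence together with Corollary \ref{cor:independentpartition} gives $\kappa_4(\sss) \in \cV_{\mathbf{I}}$, so cross-block entries vanish. Fix a block index $i$ and a sequence $(j_1,j_2,j_3,j_4) \in I_i^4$. Proposition \ref{prop:meanindsparsity}, applied to the mean independent components of $\sss_i$, forces $\kappa_4(\sss)_{j_1 j_2 j_3 j_4}=0$ whenever some index appears exactly once. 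For $r=4$ the only index multiplicity patterns compatible with this constraint are $(4)$ and $(2,2)$, which is precisely the definition of reflectional invariance. Hence each sub-tensor $(\kappa_4(\sss))_i$ is reflectionally invariant, and Corollary \ref{cor:reflInvTensor} with $r=4$ then upgrades the conclusion to $A \in \mathcal{P}_m SP(k)$.

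The main obstacle is the non-degeneracy condition \eqref{eq:condRI}: one needs $((\kappa_4(\sss))_i)_{+\cdots+jj} \neq ((\kappa_4(\sss))_i)_{+\cdots+\ell\ell}$ for all distinct $j,\ell \in I_i$, together with non-vanishing of each $(\kappa_4(\sss))_i$. These are genericity requirements not spelled out in the statement; a careful write-up would either add them explicitly as hypotheses or argue that they hold outside a measure-zero set of source distributions compatible with the non-Gaussian, mean-independent assumptions. A secondary subtlety is that Corollary \ref{cor:reflInvTensor} is stated with the structural hypothesis on $\kappa_r(\yy)$ rather than $\kappa_r(\sss)$, but via Proposition \ref{prop:connectionAlgStat} and the reparametrization $\sss = W\yy$ the algebraic content is the same, so the logical chain still closes and delivers $A \in \mathcal{P}_m SP(k)$ as claimed.
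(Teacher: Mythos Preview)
Your approach is essentially identical to the paper's: both combine Theorem \ref{thm:blockortho} with Corollary \ref{cor:reflInvTensor}, using that mean independence within each block forces the fourth cumulant sub-tensors to be reflectionally invariant (the paper cites Proposition \ref{prop:meanIndepTensor}, you invoke the equivalent Proposition \ref{prop:meanindsparsity}). Your additional remarks on the genericity condition \eqref{eq:condRI} and on the $\kappa_r(\yy)$ versus $\kappa_r(\sss)$ formulation are valid observations that the paper's one-line proof leaves implicit.
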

}

\section*{Acknowledgement}
The authors thank Serkan Hoşten for suggesting the problem and for his guidance and support throughout this work. We also thank Bernd Sturmfels, Piotr Zwiernik and Anna Seigal for their helpful conversations.
Part of this research was done while MG-L was visiting the Institute for Mathematical and Statistical Innovation (IMSI), which is supported by the National Science Foundation (Grant No. DMS-1929348). MS was supported by a Fulbright Open Study/Research Award.

\bibliographystyle{plain}
		\bibliography{report}
\end{document}